%% file: main.tex
\documentclass[a4paper, 11pt]{amsproc}

\usepackage[
    a4paper,
    left=3.4cm,
    right=3.4cm,
    top=2.5cm,
    bottom=2.5cm
]{geometry}

\usepackage{algorithm}
\usepackage{algpseudocode}
\usepackage{defs}
\usepackage{orcidlink}
\usepackage{calc}
\usepackage{graphicx,wrapfig,lipsum}
\usepackage[dvipsnames]{xcolor}
\usepackage{tabularray}
\usepackage{empheq} % for boxing equations
\newtcolorbox{empheqboxed}{colback=white, 
    colframe=black,
    boxrule=0.25mm,
    width=\columnwidth,
    sharpish corners,
    top=-2mm, % default value 2mm
    left=2pt,
    bottom=5pt
}

\definecolor{metablue}{HTML}{0064E0}
\definecolor{metafg}{HTML}{1C2B33}
\definecolor{metabg}{HTML}{F1F4F7}
\definecolor{metabgdeep}{HTML}{D9EFFF}
\definecolor{metagreen}{HTML}{EAFFE8}
\definecolor{metagreen}{HTML}{FCFFEE}
\definecolor{metared}{HTML}{FFEAE8}
\definecolor{tealblue}{RGB}{235,248,248}
\definecolor{tealbluedark}{RGB}{185,215,215}
\definecolor{linkcolor}{RGB}{225,110,20}
\definecolor{urlcolor}{RGB}{0,140,100}

\newtheorem{theorem}{Theorem}[section]
\newtheorem{assumption}{Assumption}[section]
\newtheorem{remark}{Remark}[section]

\newtheorem{proposition}{Proposition}[section]
\newtheorem{lemma}{Lemma}[section]
\newtheorem{fact}{Fact}[section] 
\newtheorem{corollary}{Corollary}[section]

\newtheorem{example}{Example}[section]
\DeclareSymbolFont{extraup}{U}{zavm}{m}{n}
\DeclareMathSymbol{\varheart}{\mathalpha}{extraup}{86}
\DeclareMathSymbol{\vardiamond}{\mathalpha}{extraup}{87}
\DeclareMathSymbol{\varclub}{\mathalpha}{extraup}{84}
\DeclareMathSymbol{\vardspade}{\mathalpha}{extraup}{85}
\usepackage[framemethod=tikz,xcolor=true]{mdframed}
\newmdenv[backgroundcolor=tealblue, roundcorner=1pt, skipabove=4pt, linewidth=0pt, innertopmargin=4pt]{myframe}
\newmdenv[backgroundcolor=metabgdeep, roundcorner=0pt, skipabove=5pt, linewidth=0pt, innertopmargin=7pt]{myOCP}
\newmdenv[backgroundcolor=metared, roundcorner=10pt, skipabove=7pt, linewidth=0pt, innertopmargin=7pt]{myalgo}
\newmdenv[%
    leftmargin=0.5cm,
    backgroundcolor=yellow!10,%
    roundcorner=5pt,%
    tikzsetting={draw=red, line width=2.0pt}%
    ]{SpecialText}%

\colorlet{tealbluelight}{tealbluedark!55!white}
    \newmdenv[
  backgroundcolor=metabg,
  roundcorner=0pt,
  skipabove=4pt,
  linewidth=0pt,
  innertopmargin=7pt,
  tikzsetting={draw=black, line width=0pt},
  frametitlefont=\bfseries,
   frametitlebackgroundcolor=tealbluelight,
  frametitleaboveskip=0.5ex,
  frametitlebelowskip=0.5ex
]{myOCPboxTitle}

\newcounter{ocpproblem}
\crefname{ocpproblem}{Problem}{Problems}
\Crefname{ocpproblem}{Problem}{Problems}

\newenvironment{problembox}[1][]{%
  \refstepcounter{ocpproblem}%
  \begin{myOCPboxTitle}[frametitle={Problem~\theocpproblem#1}]%
}{%
  \end{myOCPboxTitle}%
}

\newcounter{ocpdefinition}[section]
\crefname{ocpdefinition}{Definition}{Definitions}
\Crefname{ocpdefinition}{Definition}{Definitions}

\renewcommand{\theocpdefinition}{\Roman{section}.\arabic{ocpdefinition}}

\newenvironment{definitionbox}[1][]{%
  \refstepcounter{ocpdefinition}%
  \begin{myOCPboxTitle}[frametitle={Definition~\theocpdefinition#1}]%
}{%
  \end{myOCPboxTitle}%
}
\newcommand{\black}[1]{{\color{black} #1}} 
 
\usepackage{cleveref}

\newcounter{proofstep}
\newcounter{proofstepH}                     % never resets; see note below
\renewcommand{\theproofstep}{Step \arabic{proofstep}}

\AtBeginEnvironment{proof}{\setcounter{proofstep}{0}}

\newcommand{\proofstep}[2]{%
  \stepcounter{proofstepH}%
  \refstepcounter{proofstep}%
  \textbf{\theproofstep}\label{#1} (#2).%
}

\title[FFT-CovSteer]{Continuous-Time Covariance Steering with Common Free-Final Time: Finite-Horizon Solutions and Infinite-Horizon Limits}

\author[A. Selim]{Akan Selim\,\orcidlink{0000-0002-6505-5796}}
\author[F. Liu]{Fengjiao Liu\,\orcidlink{0000-0002-3599-6755}}
\author[S. Ganguly]{Siddhartha Ganguly\,\orcidlink{0000-0003-2046-2061}}
\author[P. Tsiotras]{Panagiotis Tsiotras\,\orcidlink{0000-0001-7563-4129}}

\thanks{%
A. Selim, S. Ganguly and P. Tsiotras are with \faGroup\ Daniel Guggenheim School of Aerospace, \faUniversity\ Georgia Institute of Technology, \faMapMarker\  Atlanta, USA. F. Liu is with the Department of Electrical and Computer Engineering, FAMU-FSU College of Engineering, Tallahassee, FL 32310, USA}

\thanks{%
Contact Information: (AS) \faEnvelope\ \texttt{aselim8@gatech.edu}, (FL) \faHome\ \url{https://sites.google.com/aya.yale.edu/fengjiao-homepage/home}, \faEnvelope\ \texttt{fliu@eng.famu.fsu.edu}, (SG) \faHome\ \url{https://sites.google.com/view/siddhartha-ganguly}, \faEnvelope\ \texttt{sganguly41@gatech.edu}, (PT) \faHome\ \url{https://dcsl.gatech.edu/tsiotras.html}, \faEnvelope\ \texttt{tsiotras@gatech.edu}.
}

\begin{document}
\subjclass[2020]{93E20, 93E03, 49K20, 49L99, 58E25, 65K10} 
% \keywords{Stochastic optimal control, covariance control, optimal transport.}

\maketitle

% \vspace{-10mm}
\begin{abstract}
This article studies the optimal \textit{common free-final time} problem for steering the state covariance of a continuous-time stochastic linear system between prescribed initial and terminal covariance matrices. 
We first establish a deterministic reformulation of the SDE-constrained free-final time stochastic optimal control problem (SOCP). For the ensuing SOCP, we provide necessary conditions for optimality and establish sufficient conditions for the optimal common final time to be finite.
Subsequently, we characterize the asymptotic behavior of the finite-horizon optimal solutions as the final time tends to infinity, the invariant sets and trajectories associated with this limiting regime, and derive the sensitivity of the Hamiltonian with respect to the common final time.
Finally, leveraging these sensitivities, we develop a trust-region line-search algorithm together with an infinite-horizon case detection method, and demonstrate its performance on three different problems: (a) an illustrative covariance-steering example, (b) a demonstration of spacecraft maneuver using covariance control, and (c) Gaussian mixture to Gaussian mixture steering. 
\end{abstract}

\keywords{\textbf{Keywords:} Stochastic optimal control, covariance control, optimal transport.}

\tableofcontents

\input{sections/1-Introduction}
\input{sections/2-Problem_Setup}
\input{sections/3.1-Main_Results}
\input{sections/3.2-Main_Results}
\input{sections/3.3-Main_Results}
\input{sections/4.1-Numerics_snsvty}
\input{sections/4.2-Numerics_algo}
\input{sections/Conclusion}

\bibliographystyle{amsalpha}
\bibliography{refs}

\appendix

\input{sections/appendixA}
\input{sections/appendixB}
\input{sections/appendixC}

\end{document}

%% file: sections/1-Introduction.tex
\section{Introduction} \label{sect:1}

Stochastic optimal control (SOC) theory \cite{ref:Flem-Rish:DSOC:75} provides a rich set of mathematical tools for analyzing uncertain dynamical systems, and designing optimal decision-making strategies, perhaps in the presence of constraints.
A particularly {challenging subclass} of stochastic optimal control problems arises when the terminal time itself is not fixed in advance, but must be \emph{optimized together} with the control policy.
In full generality, this paradigm leads to \emph{stochastic optimal control and stopping} problems \cite{book_stop}, and when the stopped realizations must satisfy a distributional constraint, to the well-known \emph{Skorokhod Embedding Problem} \cite{SEP1,SEP-eng,ref:OptStop:AS:SG:AP:PT}.
Our setting here is much more focused; we study the practical setting of stochastic optimal control problems (SOCPs) with a common stopping time, wherein all realizations are stopped at the same time.

Terminal time is itself a critical performance metric that directly affects the overall effectiveness of the control policy~\cite{Betts,ref:VinBet-DynOpt}.
Nonetheless, such \emph{common free-final time} problems are mathematically intricate, numerically delicate while being highly relevant in practice and they arise whenever the duration of the task is itself a performance-critical design variable. Some representative examples include:
\begin{enumerate}[label={\textup{(\textsf{soc}-\alph*)}}, leftmargin=*, widest=b, align=left]
\item \label{soc:app:1} Aerospace guidance and control problems, such as fuel-optimal spacecraft maneuvers~\cite{behcet2}, \cite{ref:Time:Opt:Att:Man}, attitude slew maneuvers~\cite{slew}, and planetary landing or pinpoint landing missions~\cite{behcet1};
\item \label{soc:app:2} Robotic deployment and motion-planning tasks ~\cite{KondoWuKumarHow2026}, where a robot must reach a target region or complete a manipulation task while balancing time, energy, and safety; 
\item \label{soc:app:3} Multi-agent reconfiguration problems~\cite{ZhengLinTangHe2026}, where a swarm or networked team must reorganize from one spatial distribution to another while jointly optimizing mission completion time and control effort.
\end{enumerate}
In such settings, fixing the terminal time a priori can be conservative, inefficient, or perhaps incompatible with the desired requirements. 
Allowing it to vary, however, introduces a fundamental tradeoff: a longer horizon may reduce the required control effort, but it may also allow stochastic uncertainty to grow, dissipate, or approach an invariant regime, depending on the system dynamics.

\color{black}

\subsection{Related Literature} 

The problem of controlling the second moment of a stochastic system was first formulated in the late 1980s~\cite{intro1} with the aim of controlling the \textit{steady-state} covariance of a stochastic, continuous- or discrete-time~\cite{intro3} linear time-invariant (LTI) system over an infinite time horizon.
Its finite-horizon counterpart, commonly known as the Covariance Steering (CS) problem, was first introduced and completely solved, for the continuous-time case, in the seminal papers~\cite{cite3_1,cite3_2,cite3_3}, where also the direct connection of CS with the classical problems of
Schr\"{o}dinger bridges~\cite{sch,ref:SOC:SB:YC,ref:caluya2021wasserstein,ref:SB:lifted:path} and the optimal mass transport~\cite{villani_ot} was first established. 
Specifically, under the assumption that the input matrix is the same as the diffusion matrix, 
it was proved in \cite{cite3_1} that there exists a unique time-varying optimal control law solving the CS problem, which is, in fact, the solution to a Schr\"{o}dinger bridge problem between given Gaussian boundary distributions. 
The same reference also provided analytical solutions for optimal covariance controllers.
Additional results dealing with different noise models, such as mixed multiplicative and additive noises, non-Gaussian jumps, and other martingale noises with time-varying dynamics have been reported in \cite{cite12,cite16,cite17}.
The discrete-time version of CS has been addressed in~\cite{balci,cite4,cite15,GoldTsi:cdc17}.
For a complete solution of the CS problem in discrete time see~\cite{cite18}.

There has also been a surge of interest in expanding the covariance steering theory into various settings, such as partially-available information \cite{cite8,BakolasCovarianceControl2017,maity2025optimal}, chance-constrained problems \cite{cite4}, parametric disturbances \cite{cite9}, scenarios with unknown model parameters \cite{cite10}, unknown system model or noise bounds \cite{cite13,cite14} via data-driven techniques, bounded robust setting \cite{ref:parmar2026robust}, steering using several optimal transport distances such as Wassersetein \cite{ref:Wasscovsteer:AH:EBW}, Gromov-Wasserstein \cite{ref:CovSteer:X:DD:CDC:NH:SG:KK}, and Hilbert-Schmidt penalty \cite{ref:CovSteer:IX:AH:TS:Fixed},  and also settings adapted to nonlinear systems \cite{yu2023,bakolas}.
More recently, reachability and controllability questions of the covariance equation were investigated in \cite{cite19}. 
Along the same lines,~\cite{abdelgalil2025collective} investigated the related problem of the  controllability/reachability of the state transition matrix of a linear system, also fixing a flaw in the proof of
the controllability result in \cite{brockett2012notes}.
Recent applications of covariance steering include robust space mission design~\cite{benedikter2022convex,cite1,kumagai2025robust}, planetary landing~\cite{cite2}, stochastic dynamic games~\cite{cite5,chen2019}, stochastic multi-agent control~\cite{cite6,cite7,ChenSteeringDistri2018,rapakoulias2025steering}, and contact-rich robotic systems~\cite{shirai2023}, among many others.

However, given all these preceding developments, most covariance steering formulations \emph{prescribe the terminal time in advance},
which is clearly a restriction for applications highlighted in \ref{soc:app:1}--\ref{soc:app:3} and many others. Numerical line-search procedures can be used to tune the final time in specific examples~\cite{cite2}; however, such procedures do not, by themselves, explain the structure of the optimal final time, the associated transversality condition, or the possibility that the optimal value may be approached only in an infinite-horizon limit.

\subsection{Issues and Proposed Approach}
\label{subsec:intro:issues:cov:steer}

Since the time horizon enters the optimization problem, additional optimality conditions associated with variations of the terminal time need to be introduced. 
In deterministic optimal control, this role is played by the classical Hamiltonian transversality condition \cite{ref:VinBet-DynOpt,ref:liber}. In stochastic control, however, especially when the terminal requirement is imposed at the level of a distribution or a covariance matrix, the corresponding optimality condition is \emph{less transparent}.
Thus, the issue is not only how to solve the SOCP for a prescribed horizon, but also how to determine what condition should govern the choice of horizon and how this condition behaves in both \emph{finite}- and \emph{large-horizon} regimes.

This question is natural in uncertainty-aware control. Real-world guidance and control problems often involve stochastic disturbances and measurement noise. 
For safety-critical operations, stochastic optimal controllers are required to quantify and mitigate the inherent noise in the system.
In this context, \emph{covariance control/steering} provides a mathematically tractable framework for controlling the evolution of uncertainty in stochastic dynamical systems.

\subsection*{Structural Subtleties}

The \textit{common} free-final time covariance steering problem exhibits several phenomena that are not apparent in a purely fixed-horizon formulation. 
For example:
\begin{enumerate}[label={\textup{(\textsf{ft}-\alph*)}}, leftmargin=*, widest=b, align=left]
\item \label{feature:1} \textit{Short-horizon singularity:} for very small final times, nontrivial prescribed covariance boundary conditions may force arbitrarily large control effort.
\item \label{feature:2} \textit{Long-horizon turnpike behavior:} for sufficiently large final times, optimal covariance trajectories may approach invariant covariance structures and spend most of the horizon near them before departing to satisfy the terminal constraint.
\item \label{feature:3} \textit{Asymptotic final time behavior:} in some cases, the final time optimality condition is not attained at any finite horizon, but may, instead, be approached only in the limit, as the final time tends to infinity.
\item \label{feature:4} \textit{Unmatched control-noise effects:} when the control and noise channels are not assumed to coincide, the large-horizon covariance behavior can be singular and/or arbitrarily large, yielding control and termination strategies structurally different from the matched-channel case.
\end{enumerate}
The features \ref{feature:1}--\ref{feature:4} --- which we will highlight and technically elaborate throughout our exposition in the sequel --- show that free-final time problems are indeed \emph{delicate}.

\subsection{Key Questions and Technical Contributions}
\label{subsec:intro:que:con}

This paper studies the \textit{common} free-final time covariance steering problem for \emph{continuous-time stochastic LTI systems}. Our objective is to understand not only how to compute a candidate optimal final time, but also when such a finite time should be expected to exist, and what happens when the relevant optimality condition is attained only asymptotically.

The analysis is guided by three broad questions:
\begin{myframe}
\begin{enumerate}[label={\textup{(\textsf{Q}-\alph*)}}, leftmargin=*, widest=b, align=left]
\item \label{ques:1} What is the correct optimality condition for the common final time in covariance steering, and how can it be expressed through an appropriate Hamiltonian transversality condition?
\item \label{ques:2} When does the free-final time covariance steering problem admit a finite candidate optimal final time, and when can the corresponding optimality condition occur only in the infinite-horizon limit?
\item \label{ques:3} What structure do long-horizon optimal covariance trajectories approach, and can this structure be used to design numerical methods that distinguish finite-time solutions from asymptotic regimes?
\end{enumerate}
\end{myframe}
The questions \ref{ques:1}--\ref{ques:3} provide the conceptual roadmap for the paper and lead to the following contributions:
\begin{enumerate}[label={\textup{(\textsf{C}-\alph*)}}, leftmargin=*, widest=b, align=left]
\item 
We formulate the continuous-time covariance steering problem with a common free-final time and derive the associated Hamiltonian transversality condition through an equivalent deterministic covariance-control reformulation. 
This yields a necessary condition for candidate optimal terminal times in terms of Hamiltonian-zeros.\footnote{See Definition \ref{Def:HamZeros} ahead for a definition of Hamiltonian-zeros.}

\item  
We characterize the finite-versus-asymptotic nature of the Hamiltonian-zeros.
In particular, we establish sufficient conditions under which a finite candidate optimal final time exists, based on the small- and large-horizon behavior of the associated fixed-final time covariance steering problem.
We also identify the mechanism by which the Hamiltonian-zero condition can be approached only in the infinite-horizon limit.

\item 
We introduce the notions of \emph{Asymptotic Gateways} and \emph{Asymptotic Connections} to describe the invariant covariance structures governing long-horizon optimal trajectories.\footnote{See \S \ref{sect_5_subs_a} ahead for their definitions.}
These objects provide a turnpike-type interpretation of the large-horizon regime and motivate a trust-region-based line-search algorithm with an infinite-horizon detection mechanism for computing candidate optimal final times in general stochastic LTI covariance steering problems.
\end{enumerate}
To the best of our knowledge, this is \embf{the first systematic study} of continuous-time covariance steering with a common free-final time in the general stochastic LTI setting, including \emph{both finite-horizon optimality conditions and both small-time and infinite-horizon limiting behavior}.

\subsection{Organization}

In Section~\ref{Section_2}, we first present the notation used throughout the paper and introduce our main problem and its setting.
In Section~\ref{Section_3}, we derive the transversality condition for the optimal common final time using an equivalent deterministic formulation, since the classical Stochastic Minimum Principle (SMP)~\cite{ref2} does not provide necessary conditions for either stochastic optimal stopping or optimality with respect to a common free-final time.
In Section~\ref{Section_4}, we present sufficient conditions for the finiteness of the candidate optimal final time.
Next, in Section~\ref{Section_5}, we introduce the notions of Asymptotic Gateways and Asymptotic Connections 
that characterize the invariant sets and the behavior of the associated trajectories arising in the infinite-horizon limit, along which the Hamiltonian and its sensitivity with respect to the final time vanish asymptotically. 
In Section~\ref{Section_7}, we derive sensitivities of the Hamiltonian with respect to the final time and develop a trust-region-based line-search algorithm with an infinite-horizon detection mechanism to compute the optimal common final time for general stochastic LTI systems. 
Finally, in Section~\ref{Section_8}, we demonstrate our findings on three numerical examples and detailed proofs of our technical results are given in Appendix \ref{sec:appen:a}--\ref{Appendix_Thm}.

\subsection{Notation}\label{sect:notation}

We employ standard notations unless stated otherwise. 
We let \(\N \Let \aset{1,2,\ldots}\) denote the set of positive integers. For \(n \in \N\), we denote by \(\Ree^n\) the standard Euclidean space equipped with the usual norm \(\Ree^n\ni x\mapsto\norm{x}\). 
The nonnegative orthant of \(\Ree^n\) will be denoted by \(\Ree^n_{\ge 0}\).
For \(n\in\N\), \(I_n\) denotes the \(n\times n\) identity matrix. For \(n,m\in\N\) and \(M\in\Ree^{n\times m}\), \(\ker(M)\) and \(\rng(M)\) denote its kernel and range, respectively. 
The notation \(\norm{M}\) denotes the induced Euclidean norm, i.e., the maximum singular value, and \(\|M\|_{\rm F}\) denotes the Frobenius norm.
$\mathbb{S}^n$ denotes the set of symmetric matrices, and $\mathbb{S}_{+}^n$ and $\mathbb{S}_{++}^n$ denote the symmetric positive semi-definite and positive-definite matrices in $\mathbb{R}^{n \times n}$, respectively. 
The positive semidefinite square root of $M\in\mathbb S^n_+$ is denoted by $M^{\frac{1}{2}}$. We define the matrix symmetrization operator as $M \mapsto \mathcal{S}(M) \Let M + M\t$.
For $x\in \Ree^n$ and a set $\mathcal{C} \subset \Ree^n$, the distance of $x$ from $\mathcal C$ is denoted by ${\rm dist}(x,\mathcal C) \Let \inf_{y\in\mathcal C}\|x-y\|$. 
Whenever the argument is matrix-valued, the same notation is used with the corresponding matrix norm.
We denote by $\mathbb E[\;\cdot\;]$ the expectation operator and by $\mathcal N(\mu,\Sigma)$ the Gaussian distribution with mean $\mu\in \Ree^n$ and covariance $\Sigma \in \mathbb S^n_+$.
Finally, all random variables and stochastic processes arising in the sequel are understood to be defined on a rich enough complete filtered probability space satisfying the usual conditions.\footnote{We refer interested readers to \cite[Chapters 2, 3, and 5]{book1} and \cite[Chapters 2, 3, and 8]{ref:LeGall:StoCal:Book} for several definitions related to stochastic processes, SDEs, and probabilistic implications.}

%% file: sections/2-Problem_Setup.tex
\section{Problem Formulation} \label{Section_2}

\subsection{The Central Problem}\label{subsec:prob:central:SOCP}

We fix \(n,m,r \in \N\), and consider a controlled continuous-time stochastic differential equation governed by the following dynamics
\begin{align} 
\label{StochasticSystem}
\d x(t) = \bigl(Ax(t)+Bu(t)\bigr)\d t + \noise \,\d w(t) \quad\text{for all } t \ge 0,
\end{align}
with the following data: 
\begin{enumerate}[label={\textup{(\ref{StochasticSystem}-\alph*)}}, leftmargin=*, widest=b, align=left]
\item \label{prob:data:1} $x(\cdot)$ is an $\mathbb{R}^n$-valued state process, $u(\cdot)$ is an $\mathbb{R}^m$-valued control process;

\item \label{prob:data:2} $\noise \in\mathbb{R}^{n\times r}$ is the diffusion matrix, $w(\cdot)$ is a $\mathbb{R}^r$-valued standard Brownian motion and $A\in\mathbb{R}^{n\times n}$, $B\in\mathbb{R}^{n\times m}$ are constant matrices. We define \(D\Let \noise \noise \t\).
\end{enumerate}
The initial state is denoted by \(x_0\Let x(0)\), and is assumed to satisfy \(x_0\sim\mathcal{N}(0,\initcov )\) where $\initcov $ denotes the prescribed initial covariance. 
We assume that \(x_0\) is independent of \(w(\cdot)\), and let \(\{\filtration_t\}_{t\ge0}\) be the usual augmentation of the filtration generated by \(x_0\) and \(w(\cdot)\), that is, the $\sigma$-algebra \(\sigma(x_0,w(s):0\le s\le t)\). 
Here $\sigma(\cdot)$ denotes the $\sigma$-algebra generated by the random variables inside the parentheses.
We \emph{highlight that}, no uniform ellipticity condition is imposed, i.e., $D$ is allowed to be singular, such that the Brownian motion need not excite all directions of the state space.

The objective is to steer the state distribution of \eqref{StochasticSystem}, under a common \textit{free} final time $\tf>0$, to the prescribed terminal Gaussian distribution $x(\tf) \Let \xf\sim\mathcal{N}(0,\fincov )$, where $\fincov$ denotes the prescribed terminal covariance.
Since both the initial and terminal means are fixed at zero, the problem reduces to one of only steering the covariance.

The following standing assumptions are used throughout the paper.
\begin{assumption}[Standing assumptions]
\label{assump:standing}
We assume the following:
\begin{enumerate}[label={\textup{(\ref{assump:standing}-\alph*)}}, leftmargin=*, widest=b, align=left]
    \item \label{assump:standing:1} The pair $(A,B)$ is controllable.

    \item \label{assump:standing:2} The prescribed endpoint covariances satisfy $\initcov ,\fincov \in\mathbb{S}_{++}^n$ and $\initcov \neq \fincov$.
\end{enumerate}
\end{assumption}

A control input $\uu$ is said to be \emph{admissible} on $[0, +\infty)$ if it is nonanticipative and has finite expected energy on $[0, +\infty)$. In particular, $u(\cdot)$ may depend on $t$ and, possibly, on the state history $\aset[]{x(s)\suchthat 0\le s\le t}$, but not on future values of the state. 
More precisely, define the set of all such \emph{admissible control inputs} by
\begin{align}
\mathcal U \Let \left\{ \uu\,\middle|\,
\begin{array}{l}
\uu \text{ is progressively measurable with respect to } \{\filtration_t\}_{t\ge0}\\ \text{and }
\mathbb E\left[\int_0^{+\infty} u(t)\t u(t)\,\d t\right]<+\infty
\end{array} \right\}.
\end{align}
For every admissible control \(u(\cdot) \in \mathcal{U}\), under the data \ref{prob:data:1}--\ref{prob:data:2}, the controlled SDE \eqref{StochasticSystem} admits a \emph{unique strong solution} adapted to the usual augmentation of the filtration generated by \(x_0\) and \(w(\cdot)\) \cite[Chapter 1, Section 6.4]{ref2}.

For a given admissible control \(u(\cdot)\in\mathcal{U}\) and a given \emph{common free-final} time \(\tf>0\),we  consider the performance index
\begin{align} \label{eq:cost}
\bigl(\tf, u(\cdot)\bigr) \mapsto J(\tf, u(\cdot)) \Let  \E\left\{ \half \int_0^{\tf}  \big[\varpi +x(t)\t Q x(t) + u(t)\t R u(t)\big] \,\d t \right\},
\end{align}
where $Q\succcurlyeq 0$, $R\succ 0$, and $\varpi\ge 0$. Here is our central object of study.
\begin{problembox}[ (Covariance Steering Problem)]\label{prob:Cov:Steer}
Consider the SDE \eqref{StochasticSystem} along with its data \ref{prob:data:1}--\ref{prob:data:2}. Find $\tf > 0$ and $u(\cdot)\in \mathcal{U}$ that solve the SOCP
\begin{equation}
\begin{aligned}
& \inf_{\tf>0,\;u(\cdot)\in\mathcal{U}} 	&&  J\bigl(\tf,u(\cdot)\bigr)  \\
& \hspace{6mm} \sbjto		&&   \begin{cases}
\text{SDE}\,\,\eqref{StochasticSystem} \text{ with }\ref{prob:data:1}\text{--}\ref{prob:data:2},\\ 
x_0\sim\mathcal{N}(0,\initcov ) \text{ and }\xf\sim\mathcal{N}(0,\fincov ) \\
\text{with }\initcov ,\fincov \in \mathbb{S}_{++}^n,~ \initcov \ne\fincov.
\end{cases}
\end{aligned}\nn
\end{equation}
\end{problembox}
Notice that in \Cref{prob:Cov:Steer}, we optimize \eqref{eq:cost} over not only the admissible controls $u(\cdot) \in \mathcal{U}$, but \emph{also} the free-final time $\tf$. 

%% file: sections/3.1-Main_Results.tex
\section{Deterministic Reformulation} \label{Section_3}
In this section, we will provide the necessary conditions for \Cref{prob:Cov:Steer}. This will be achieved by reformulating \Cref{prob:Cov:Steer} as a deterministic OCP, for which we can invoke the standard Pontryagin Maximum Principle (PMP) \cite{ref:VinBet-DynOpt,ref:liber}.
\begin{myOCP}
\begin{proposition}\label{Prop:IV1}
Consider \Cref{prob:Cov:Steer} along with its data \ref{prob:data:1}--\ref{prob:data:2} and suppose that \Cref{prob:Cov:Steer} admits an optimal solution $\bigl(u\as(\cdot),\tf \as \bigr) \in \mathcal{U} \times (0,+\infty)$. Then, \(u\as(\cdot)\) admits a linear state-feedback representation
\begin{align} \label{opt_controller}
  [0,\tf] \ni t \mapsto  u\as(t) = K\as(t) x(t),
\end{align}
where $K\as:[0,\tf]\to\Ree^{m\times n}$ is the optimal time-varying feedback gain. 
\end{proposition}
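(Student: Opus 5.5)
Fix the optimal final time $\tf\as$ and show that, with $\tf=\tf\as$ held fixed, $u\as(\cdot)$ solves the fixed-horizon covariance steering problem on $[0,\tf\as]$. Indeed, if some admissible $u(\cdot)$ satisfied the endpoint constraints on $[0,\tf\as]$ with $J(\tf\as,u)<J(\tf\as,u\as)$, then the pair $(u,\tf\as)$ would beat the optimum of \Cref{prob:Cov:Steer}. So it suffices to show that every fixed-horizon optimal control with Gaussian zero-mean endpoints can be realized, or replaced without loss, by a linear state feedback $u=K(t)x$.

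\emph{Step 1: reduction to second moments.} Since $\varpi$ only contributes the constant $\varpi\tf\as/2$, the cost depends on $u$ only through
\begin{align}
\half\int_0^{\tf\as}\operatorname{tr}\bigl(Q\Sigma(t)\bigr)+\E\bigl[u(t)\t Ru(t)\bigr]\,\d t,\qquad \Sigma(t)\Let\E[x(t)x(t)\t].
\end{align}
For an arbitrary admissible $u$, set $S(t)\Let\E[u(t)x(t)\t]$ and $U(t)\Let\E[u(t)u(t)\t]$. By It\^o's formula, $\dot\Sigma=A\Sigma+\Sigma A\t+BS+S\t B\t+D$, so the covariance trajectory depends on $u$ only through $S$. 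The Cauchy--Schwarz (Schur complement) inequality gives $U\succcurlyeq S\Sigma^{-1}S\t$ whenever $\Sigma(t)\succ0$.

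\emph{Step 2: replace $u$ by linear feedback.} Define $K(t)\Let S(t)\Sigma(t)^{-1}$ and the closed-loop control $\tilde u=Kx$. I will argue that the linear closed loop $\d\tilde x=(A+BK)\tilde x\,\d t+\noise\,\d w$ with $\tilde x_0\sim\mathcal N(0,\initcov)$ has the same second-moment ODE and the same initial condition as $\Sigma$. Hence it produces the same $\Sigma(t)$, and its state is Gaussian with mean zero, so the terminal law is exactly $\mathcal N(0,\fincov)$. Its control energy is $\operatorname{tr}(RK\Sigma K\t)=\operatorname{tr}(RS\Sigma^{-1}S\t)\le\operatorname{tr}(RU)$. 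Consequently the linear feedback is feasible and no worse. When the inequality is strict, this contradicts optimality; otherwise $u\as$ coincides with $K\as x$ almost surely, because equality in the Schur bound forces $u=S\Sigma^{-1}x$. This yields \eqref{opt_controller}.

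A caveat: the original $u\as$ need not produce a zero-mean state. If the state mean $\mu\neq0$, I would instead work with $\E[xx\t]$ throughout. The terminal constraint, zero mean with covariance $\fincov$, then forces $\E[x(\tf\as)x(\tf\as)\t]=\fincov$ and $\mu(\tf\as)=0$. Running the same argument on the second moment still yields a feasible linear feedback with no larger cost.

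\emph{Main obstacle.} The hard step is justifying that $\Sigma(t)\succ0$ on $[0,\tf\as]$, so that $K=S\Sigma^{-1}$ is well defined, together with enough regularity of $K$ (measurability and local square integrability) for the closed-loop SDE to be well posed and for $\tilde u$ to belong to $\mathcal U$. Because $D$ may be singular, positivity cannot come from the noise. I would try two things in order.
\begin{enumerate}
\item Use a Gronwall-type lower bound: writing $\Sigma(t)=\Phi(t)\bigl[\initcov+\int_0^t\cdots\bigr]\Phi(t)\t$ with the transition matrix of the closed loop should keep $\Sigma\succ0$ as long as $K$ is integrable.
\item If that fails, truncate: use $K_\epsilon=S(\Sigma+\epsilon I)^{-1}$, show feasibility and costs converge as $\epsilon\to0$, and conclude with the optimality of $u\as$.
\end{enumerate}
A cleaner alternative avoids this altogether. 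Pose the deterministic problem in $(\Sigma,S)$ with the relaxed cost $\operatorname{tr}(RS\Sigma^{-1}S\t)$, which is exactly the deterministic reformulation of this section. Apply the PMP to get a Riccati-type structure $K\as=-R^{-1}B\t\Pi(t)$ for a costate $\Pi$. The existence of a bounded costate then directly gives a bounded gain.
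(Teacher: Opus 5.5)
\textbf{Verdict: genuine gap.} Your second-moment/Schur-complement replacement is a different route from the paper's, but it rests on an unproved hypothesis: that $\Sigma^*(t)=\mathbb{E}[x^*(t)x^*(t)^\top]\succ0$ on all of $[0,t_{\rm f}^*]$. You flag this step as the main obstacle, and it is never closed.

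\textbf{Why the replacement step needs positivity.} The inference ``same second-moment ODE and same initial condition, hence same $\Sigma$'' needs $K=S\Sigma^{-1}$ to be defined and in $L^1$. Uniqueness for $\dot X=(A+BK)X+X(A+BK)^\top+D$ requires that.

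\textbf{Admissibility does not give positivity.} $D$ may be singular, and $u$ may depend on $x_0$ and $w$ directly, not only on the current state. Take $n=m=1$, $A=0$, $B=1$, $\Gamma=0$ and $u(t)=-c\,x_0$ with $c=(1+\sqrt{\Sigma_{\rm f}/\Sigma_0})/t_{\rm f}$.
\begin{itemize}
\item This control is $\mathcal F_0$-measurable and has finite energy.
\item It is feasible, since $x(t)=(1-ct)x_0$ gives $x(t_{\rm f})=-\sqrt{\Sigma_{\rm f}/\Sigma_0}\,x_0\sim\mathcal N(0,\Sigma_{\rm f})$.
\item Yet $\mathbb{E}[x(t)^2]=(1-ct)^2\Sigma_0$ vanishes at $t=1/c\in(0,t_{\rm f})$.
\item There $K=-c/(1-ct)$ is not integrable, and $\dot X=2KX$ loses uniqueness: $X\equiv0$ after $1/c$ is another solution.
\end{itemize}
So positivity can only come from optimality, and your proposal contains no such argument.

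\textbf{Why your three remedies do not close it.}
\begin{itemize}
\item Remedy (1) is circular. The closed-loop transition matrix exists only if $K\in L^1$. Finite energy bounds only $\int\|R^{1/2}K\Sigma^{1/2}\|_{\rm F}^2\,dt$, which is compatible with $\|K\|\sim(\tau-t)^{-1}$, as in the example.
\item Remedy (2) destroys exact terminal feasibility, since the $K_\epsilon$ closed loop no longer reproduces $\Sigma$. At best it shows that linear feedbacks approach the infimum, not that $u^*$ is one.
\item Remedy (3), the PMP for the $(\Sigma,S)$ problem, presupposes an optimizer in the interior $\Sigma\succ0$. The running cost $\operatorname{tr}(RS\Sigma^{-1}S^\top)$ is singular on the boundary, which acts as a state constraint. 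It also presupposes normality, so it is the same gap. Moreover, the paper justifies its deterministic reformulation by this very proposition, so you cannot lean on that reformulation here.
\end{itemize}

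\textbf{What closes it: the paper's verification argument.}
\begin{enumerate}
\item Take $S(\cdot)$ solving the Riccati equation on $[0,t_{\rm f}]$ such that the feedback $-R^{-1}B^\top Sx$ meets both covariance constraints. Existence comes from the fixed-horizon theory (cf.\ Fact~\ref{rem:fixed_horizon_wellposedness}).
\item Add $\tfrac12\mathbb{E}[x(t_{\rm f})^\top S(t_{\rm f})x(t_{\rm f})-x(0)^\top S(0)x(0)]$, which is the same constant for every feasible control.
\item Apply It\^o's formula and complete the square, which gives, for every feasible $u$,
\[
J(u)=\mathrm{const}+\tfrac12\,\mathbb{E}\int_0^{t_{\rm f}}\bigl\|u+R^{-1}B^\top Sx\bigr\|_R^2\,dt .
\]
No positivity of $\mathbb{E}[xx^\top]$ is needed here.
\item The Riccati feedback attains the constant. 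Hence any optimal $u^*$ equals $-R^{-1}B^\top Sx$ a.e., and $\Sigma^*\succ0$ follows a posteriori.
\end{enumerate}

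\textbf{What your route would buy if completed.} It is more elementary: it avoids the coupled Riccati/Lyapunov boundary-value problem and yields a replacement lemma valid for any control. It is complete once $\Sigma^*\succ0$ on $[0,t_{\rm f}^*]$. In that case $\Sigma^{*-1}$ is bounded on the compact interval and $\|S\|^2\le\|\Sigma\|\operatorname{tr}U$, so $K\in L^2$. But that positivity hypothesis is exactly what remains unproved.
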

\end{myOCP}
A detailed proof of Proposition~\ref{Prop:IV1} is given in Appendix~\ref{sec:appen:a}. Proposition~\ref{Prop:IV1} identifies the structural form of the optimal controller and furnishes the feedback gain explicitly in terms of a (Riccati matrix-valued) trajectory \(t \mapsto S(t)\). 
However, this characterization alone \emph{does not determine} the optimal free-final time.  To address the free-final time component of the problem, we next proceed to an equivalent deterministic covariance-control formulation and derive the corresponding transversality condition for the associated Hamiltonian.

With the controller parameterization \eqref{opt_controller}, 
the problem becomes one of finding a suitable \emph{time-varying gain matrix} $K:[0,+\infty)\to\Ree^{m\times n}$.
We say that \(K(\cdot)\) is \emph{admissible} if it is locally piecewise continuous on \([0,+\infty)\). 
Specifically, we define the set of admissible gains
\begin{align*}
\mathcal{K}\Let \mathrm{PC}_{\rm loc}\bigl([0,+\infty),\Ree^{m\times n}\bigr),    
\end{align*}
where \(\mathrm{PC}_{\rm loc}\bigl([0,+\infty),\Ree^{m\times n}\bigr)\) denotes the set of \(\Ree^{m\times n}\)-valued functions that are piecewise continuous on every compact subinterval of \([0,+\infty)\).  

Observe that under the control law \eqref{opt_controller}, the stochastic dynamics \eqref{StochasticSystem} induce deterministic covariance dynamics, and the state distribution remains Gaussian with \emph{zero mean} for all $t\in[0,\tf]$, for a given common final time $\tf$. Moreover, the performance index \eqref{eq:cost} reduces to a deterministic objective functional depending only on the covariance dynamics of the state. Since the law \eqref{opt_controller} is optimal for \Cref{prob:Cov:Steer}, we have the following \emph{deterministic} OCP.

\medskip

\begin{problembox}[ (Deterministic Reformulation)]\label{prob:Det:Cov:Steer}
Consider the SDE \eqref{StochasticSystem} along with its data \ref{prob:data:1}--\ref{prob:data:2}. In view of Proposition \ref{Prop:IV1}, solving \Cref{prob:Cov:Steer} is \emph{equivalent} to solving the OCP
\begin{subequations}
\begin{align}
\inf_{\tf>0,\;K\in \mathcal{K}} & \hspace{3mm}J^{\rm P2}(\tf,K) \Let  \half \int_0^{\tf} \big(\varpi +\trace\big(Q\Sigma(t) \big) \nonumber \\
& + \trace\big( K(t)\t R K(t)\Sigma(t)\big)\big)\,\d t,\label{det_opt_problem}
\end{align}
\hspace{27mm}\(\sbjto\)
\begin{align}
\hspace{27mm}&\dot{\Sigma}(t)=\left(A+BK(t)\right)\Sigma(t)+\Sigma(t)\left(A+BK(t)\right)\t+D,\label{det_opt_problem_cov_prop}\\
\hspace{27mm}&\Sigma(0)=\initcov ,~~ \Sigma(\tf)=\fincov  \label{mean_bdrS}. 
\end{align}
\end{subequations}
\end{problembox}

\Cref{prob:Det:Cov:Steer} is \emph{equivalent} to \Cref{prob:Cov:Steer} in the sense that it induces the \emph{same optimal value} of the performance index \eqref{eq:cost} and the same optimal feedback controller. 
\begin{remark}
Since \Cref{prob:Det:Cov:Steer} is deterministic, we can employ standard tools from deterministic optimal control theory \cite[Chapters 7-13]{ref:VinBet-DynOpt} for its analysis. 
Moreover, minimizing the corresponding fixed-horizon value over $\tf$ preserves the equivalence for the common free-final time problem, because the equivalence holds for \emph{every fixed $\tf>0$.}  
\end{remark}

We start our analysis by providing the Hamiltonian and the necessary conditions for optimality for \Cref{prob:Det:Cov:Steer}. 

\begin{myOCP}
\begin{theorem}\label{Theorem1}
    Consider \Cref{prob:Det:Cov:Steer} with its associated data and let $\tfc \in (0,+\infty)$ be a candidate optimal final time. Let $\bigl(\Sigma(\cdot),\Pi(\cdot)\bigr)$ denote the corresponding optimal state and co-state trajectories, and let $(\mathbb{S}_{++}^n \times \mathbb{S}^n)\ni (\Sigma,\Pi)\mapsto \hamil_{\tfc}(\Sigma,\Pi)\in \Ree$ defined by
 \begin{align} \label{final_ham_dyn}
     \hamil_{\tfc}(\Sigma,\Pi)  \Let \half \varpi +\trace\Big( \half Q\Sigma +\Pi  A\Sigma \Big) - \trace \Big( \half\Pi B R^{-1} B\t\Pi \Sigma  - \half\Pi D\Big),
    \end{align}
    be the associated Hamiltonian.
    Then, along the optimal trajectory $[0,\tfc]\ni t\mapsto(\Sigma(t),\Pi(t))$, the Hamiltonian satisfies the transversality condition
    \begin{align} \label{total_hamiltonian}
    \hspace{-1 em} \hamil_{\tfc} (\Sigma(t),\Pi(t)) = 0 \quad \text{for all }t\in[0,\tfc],
    \end{align}
    where the symmetric matrix-valued co-state trajectory $t\mapsto \Pi(t)\in\mathbb{S}^n$ satisfies
    \begin{equation}  \label{final_ricc_dyn}
        \dot{\Pi}(t)=-Q-A\t\Pi(t)-\Pi(t)A  +\Pi(t)BR^{-1}B\t\Pi(t) ,
    \end{equation}  
    with free endpoints $\Pi(0)$ and $\Pi(\tfc)$.
\end{theorem}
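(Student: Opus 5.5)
Since \Cref{prob:Det:Cov:Steer} is a deterministic free-final-time OCP with state $\Sigma\in\mathbb{S}^n$, control $K$ and fixed endpoint constraints \eqref{mean_bdrS}, I will apply the PMP for free final time \cite{ref:VinBet-DynOpt,ref:liber}. The plan is to optimize the Hamiltonian over $K$, recover \eqref{final_ham_dyn} and \eqref{final_ricc_dyn}, and then obtain \eqref{total_hamiltonian} from free-time transversality and autonomy.

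\textbf{Pre-Hamiltonian.} With a symmetric multiplier $\Pi$ for \eqref{det_opt_problem_cov_prop}, I will write the pre-Hamiltonian
\[
H(\Sigma,K,\Pi)=\tfrac12\varpi+\tfrac12\trace(Q\Sigma)+\tfrac12\trace(K\t RK\Sigma)+\trace\bigl(\Pi[(A+BK)\Sigma+\Sigma(A+BK)\t+D]\bigr).
\]
I will rescale $\Pi$ by $\tfrac12$ so that, using symmetry of $\Pi$ and $\Sigma$, the dynamics term reads $\trace(\Pi(A+BK)\Sigma)+\tfrac12\trace(\Pi D)$. This scaling is what matches the constants in \eqref{final_ham_dyn}.

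\textbf{Abnormal multiplier.} I must first rule out the abnormal case $\lambda_0=0$. I will argue via controllability of $(A,B)$: in that case minimization over $K$ would force $\Pi B\Sigma=0$ with $\Sigma\succ0$, so $\Pi B=0$. Combined with the adjoint equation $\dot\Pi=-A\t\Pi-\Pi A$, this gives $\Pi(t)=e^{-A\t t}\Pi(0)e^{-At}$. Then $\Pi B=0$ on an interval propagates to $\Pi A^kB=0$ for all $k$, which forces $\Pi\equiv0$ and contradicts nontriviality of the multipliers. I expect this step to be the main technical obstacle, along with justifying the PMP on the open cone $\mathbb{S}^n_{++}$. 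For the latter I will note that $\Sigma(t)\succ0$ along trajectories starting at $\initcov\succ0$, so the constraint is inactive.

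\textbf{Minimization over $K$.} Since $R\succ0$ and $\Sigma\succ0$, $H$ is strictly convex in $K$. Setting $\partial_K H=RK\Sigma+B\t\Pi\Sigma=0$ gives $K=-R^{-1}B\t\Pi$. Substituting back yields
\[
\tfrac12\trace(\Pi BR^{-1}B\t\Pi\Sigma)-\trace(\Pi BR^{-1}B\t\Pi\Sigma)=-\tfrac12\trace(\Pi BR^{-1}B\t\Pi\Sigma),
\]
so the minimized Hamiltonian is exactly \eqref{final_ham_dyn}.

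\textbf{Adjoint equation.} I will compute $\dot\Pi=-\partial_\Sigma H$ at the optimal $K$, using $\partial_\Sigma\trace(M\Sigma)=M\t$ and symmetrizing. This gives $-\tfrac12Q-\tfrac12(A\t\Pi+\Pi A)+\tfrac12\Pi BR^{-1}B\t\Pi$, up to the chosen scaling of $\Pi$ by a factor $2$. After fixing the normalization consistently, for instance taking $\Pi$ as the costate of $\tfrac12$ times the dynamics, this becomes \eqref{final_ricc_dyn}. Because both endpoints of $\Sigma$ are fixed, the endpoint transversality conditions place no constraint on $\Pi(0)$ or $\Pi(\tfc)$, so both are free.

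\textbf{Constancy and free-time transversality.} The problem is autonomous, so along extremals $\frac{d}{dt}\hamil_{\tfc}=\partial_\Sigma\hamil\cdot\dot\Sigma+\partial_\Pi\hamil\cdot\dot\Pi=0$ by the canonical equations. Hence the Hamiltonian is constant on $[0,\tfc]$. The free-final-time transversality condition, with no terminal cost and a time-independent terminal set, gives $\hamil_{\tfc}(\Sigma(\tfc),\Pi(\tfc))=0$. Together these yield \eqref{total_hamiltonian} for all $t\in[0,\tfc]$.
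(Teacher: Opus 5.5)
Your proposal is correct and follows the same route as the paper, which simply invokes the PMP for the free-final-time deterministic problem; beyond the paper, you also spell out the minimization over $K$, the exclusion of the abnormal multiplier via controllability of $(A,B)$, and the constancy-plus-free-time-transversality argument that the paper leaves implicit. Two small slips to fix when writing it up: the abnormal-case condition should read $B^{\top}\Pi\Sigma=0$ rather than $\Pi B\Sigma=0$, and since your $\Pi$ is twice the PMP costate, the adjoint equation is $\dot\Pi=-2\,\partial_\Sigma \hamil$ (symmetrized gradient), which gives \eqref{final_ricc_dyn} exactly.
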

\end{myOCP}
A proof of Theorem \ref{Theorem1} is included in Appendix \ref{sec:appen:a}. Next, we recall the finite-horizon well-posedness result from~\cite{cite17} that will be used throughout the paper.

\begin{fact}[Fixed-horizon well-posedness]
\label{rem:fixed_horizon_wellposedness}
For every fixed finite final time $\tf\in(0,+\infty)$, the co-state/covariance two-point boundary value problem in \eqref{final_ricc_dyn}--\eqref{final_cov_dyn} associated with \Cref{prob:Det:Cov:Steer} admits a \emph{unique solution}.
In particular, from \cite[Lemma~8]{cite17}, it follows that, the map from $\Pi(0)\in \mathbb{S}^n$ to $\fincov  \in \mathbb S^n_{++}$ is a homeomorphism, while \cite[Theorem~3]{cite17} established the uniqueness of the solution of the coupled equations. 
The case $\tf\to + \infty$ will be discussed in Section~\ref{Section_5}.
\end{fact}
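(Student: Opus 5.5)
The plan is to reduce the two-point boundary value problem to a shooting problem in the single unknown $\Pi(0)$ and then show that the shooting map is a homeomorphism onto $\mathbb S^n_{++}$. This is the content of \cite[Lemma~8]{cite17}.

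\emph{Setting up the shooting map.} Fix $\tf\in(0,+\infty)$ and let $\mathcal D_{\tf}\subset\mathbb S^n$ be the set of initial values $\Pi_0$ for which the Riccati equation \eqref{final_ricc_dyn} with $\Pi(0)=\Pi_0$ has no finite escape on $[0,\tf]$. By continuous dependence of ODE solutions, $\mathcal D_{\tf}$ is open, and it contains every $\Pi_0\succcurlyeq 0$, because the backward-type Riccati flow started there stays bounded. For $\Pi_0\in\mathcal D_{\tf}$ we insert the feedback $K(t)=-R^{-1}B\t\Pi(t)$ into \eqref{det_opt_problem_cov_prop}. This gives a linear Lyapunov equation
\[
\dot\Sigma=(A-BR^{-1}B\t\Pi)\Sigma+\Sigma(A-BR^{-1}B\t\Pi)\t+D,\qquad \Sigma(0)=\initcov\succ0 .
\]
Its solution exists on all of $[0,\tf]$ and stays positive definite, since $\Sigma(t)=\Phi(t)\initcov\Phi(t)\t+(\text{PSD})$ with $\Phi$ the closed-loop transition matrix. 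Hence the shooting map $F_{\tf}:\mathcal D_{\tf}\to\mathbb S^n_{++}$, $\Pi_0\mapsto\Sigma(\tf)$, is well defined and continuous. A solution of the boundary value problem is exactly a preimage of $\fincov$ under $F_{\tf}$, so well-posedness is equivalent to bijectivity of $F_{\tf}$.

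\emph{Injectivity.} Take two solutions $(\Sigma_i,\Pi_i)$ with $F_{\tf}(\Pi_{0,1})=F_{\tf}(\Pi_{0,2})$. I would compute
\[
\tfrac{\d}{\d t}\trace\bigl((\Pi_1-\Pi_2)(\Sigma_1-\Sigma_2)\bigr).
\]
Using the Riccati and Lyapunov equations, the $Q$, $A$ and $D$ terms cancel, and the derivative reduces to a nonpositive quantity of the form
\[
-\trace\bigl(BR^{-1}B\t(\Pi_1-\Pi_2)(\Sigma_1+\Sigma_2)(\Pi_1-\Pi_2)\bigr).
\]
The trace vanishes at $t=0$, because $\Sigma_1(0)=\Sigma_2(0)$, and at $t=\tf$, because $\Sigma_1(\tf)=\Sigma_2(\tf)$. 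So the integrand vanishes on $[0,\tf]$, giving $B\t(\Pi_1-\Pi_2)\equiv 0$. Then $\Pi_1-\Pi_2$ satisfies a linear ODE whose solutions with $B\t(\Pi_1-\Pi_2)\equiv0$ must vanish by controllability of $(A,B)$ (\Cref{assump:standing}). This gives $\Pi_1=\Pi_2$, and uniqueness of $\Sigma$ follows. Equivalently, one can argue that the change of variables $U=K\Sigma$ makes \Cref{prob:Det:Cov:Steer} at fixed $\tf$ a strictly convex program, so its minimizer and multiplier are unique.

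\emph{Surjectivity.} Since $\mathcal D_{\tf}$ and $\mathbb S^n_{++}$ are open subsets of spaces of equal dimension, invariance of domain shows that $F_{\tf}$ is an open map onto its image. Thus $F_{\tf}(\mathcal D_{\tf})$ is open in $\mathbb S^n_{++}$. If it is also closed, connectedness of $\mathbb S^n_{++}$ gives surjectivity, and $F_{\tf}$ is then a homeomorphism.

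\emph{Main obstacle: properness.} Closedness of the image amounts to properness of $F_{\tf}$: if $\Pi_0\to\partial\mathcal D_{\tf}$ or $\|\Pi_0\|\to\infty$, then $\Sigma(\tf)$ must either leave every compact subset of $\mathbb S^n_{++}$, degenerating toward a singular matrix or blowing up. The first thing I would try is a Hamiltonian/linear-fractional representation. Write $\Pi=YX^{-1}$ from the linear Hamiltonian system, so that finite escape corresponds to $X(t)$ becoming singular. Then show that $\Sigma$ correspondingly collapses in the directions of $\ker X(\tf)$, with controllability used once more through the Gramian to rule out compensating behaviour. As a fallback, the convex reformulation gives existence of an optimizer for every $\fincov\succ0$ directly, since controllability makes the feasible set nonempty and the cost is coercive in $U$. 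A strictly feasible point then yields a Lagrange multiplier $\Pi$ via strong duality, which supplies surjectivity without a topological argument.
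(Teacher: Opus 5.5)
The genuine gap is existence, i.e.\ that $F_{\tf}(\mathcal D_{\tf})=\mathbb S^n_{++}$. This is the nontrivial content of \cite[Lemma~8]{cite17}, which the paper simply cites, and you leave it as a plan.

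Escape to infinity is the easy half. As in Step~2 of the proof of Proposition~\ref{cor:cov_costate_compactness}, $\|\Sigma(\tf)\|\ge\lambda_{\min}(\initcov)\bigl(\sigma_{\min}(\Phi_{12}(\tf,0))\|\Pi_0\|-\|\Phi_{11}(\tf,0)\|\bigr)^2$ for large $\|\Pi_0\|$. Approach to $\partial\mathcal D_{\tf}$ is not easy. Write $X(t)=\Phi_{11}(t,0)+\Phi_{12}(t,0)\Pi_0$; then $\Sigma(\tf)=X(\tf)\bigl[\initcov+\int_0^{\tf}X(s)^{-1}DX(s)^{-\top}\,\mathrm{d}s\bigr]X(\tf)^{\top}$. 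As $X(\tf)$ becomes singular the bracket can blow up (already for $n=1$, $A=Q=0$, $D>0$). So the degeneracy of $\Sigma(\tf)$ along the limiting kernel of $X(\tf)^{\top}$ is a $0\cdot\infty$ statement that needs a real estimate. Your linear-fractional sketch names this estimate but does not provide it.

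The convex fallback skips the decisive step as well. First, you need a minimizer whose covariance stays in $\mathbb S^n_{++}$ on all of $[0,\tf]$. This is not automatic when $D$ is singular: for $n=1$, $A=D=0$, the relaxed path $\Sigma(t)=(t-t_1)^2$ with $t_1\in(0,\tf)$ has finite cost. Second, you need the multiplier to be a classical solution of \eqref{final_ricc_dyn} on the whole closed interval, with optimal gain $-R^{-1}B^{\top}\Pi$. Invoking strong duality at a strictly feasible point does not by itself deliver either, since $\Sigma(\tf)=\fincov$ is an equality constraint.

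A clean way to close the gap is to prove existence of a fixed-$\tf$ minimizer with $\Sigma(t)\succ0$, and then apply the PMP as in Theorem~\ref{Theorem1}. The PMP costate is absolutely continuous on $[0,\tf]$, so it cannot escape. Normality follows from your own controllability argument: an abnormal multiplier satisfies $B^{\top}\Pi\equiv0$ and $\dot\Pi=-A^{\top}\Pi-\Pi A$, hence $\Pi\equiv0$. Otherwise, surjectivity must be taken from \cite[Lemma~8]{cite17}, exactly as the paper does.

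The rest of your argument is sound. Your injectivity identity checks out. The $A$, $Q$, $D$ terms cancel and, with $G=BR^{-1}B^{\top}$, the derivative is $-\trace\bigl(G\,\Delta\Pi(\Sigma_1+\Sigma_2)\Delta\Pi\bigr)\le0$. Once $B^{\top}\Delta\Pi\equiv0$, the difference $\Pi_1G\Pi_1-\Pi_2G\Pi_2=\Delta\Pi\,G\Pi_1+\Pi_2G\,\Delta\Pi$ vanishes, so $\Delta\Pi(t)=e^{-A^{\top}t}\Delta\Pi(0)e^{-At}$, and controllability forces $\Delta\Pi\equiv0$. This is a self-contained substitute for \cite[Theorem~3]{cite17}. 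Combined with invariance of domain, it makes $F_{\tf}$ a homeomorphism onto an open subset of $\mathbb S^n_{++}$. It also gives global injectivity directly, unlike the local nonsingular-Jacobian argument the paper attributes to \cite{cite17}. Either route still needs properness or an existence argument for surjectivity.

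There are two slips.
\begin{itemize}
\item $\mathcal D_{\tf}$ contains every $\Pi_0\preccurlyeq0$, not every $\Pi_0\succcurlyeq0$. With $P=-\Pi$, \eqref{final_ricc_dyn} becomes $\dot P=Q-A^{\top}P-PA-PGP$, the reverse-time LQ Riccati flow for $-A$. By contrast, for $n=1$, $A=Q=0$, $G=1$ one gets $\Pi(t)=\Pi_0/(1-\Pi_0t)$, which escapes before $\tf$ whenever $\Pi_0>1/\tf$. This matches the bound $\Pi_{\tf}(0)\prec-\Phi_{12}(\tf,0)^{-1}\Phi_{11}(\tf,0)$ quoted in the proof of Lemma~\ref{inf_hor_cs}. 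The slip is harmless, since only $0\in\mathcal D_{\tf}$ is needed.
\item The $(\Sigma,U)$ program is not strictly convex, because $\trace(RU\Sigma^{-1}U^{\top})$ is positively homogeneous of degree one. Uniqueness of its minimizer holds only because equality in the convexity inequality forces equal gains $U\Sigma^{-1}$.
\end{itemize}
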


\subsection{Infinite Horizon Case} \label{Section_InfHor}

A special subclass of solutions to \Cref{prob:Det:Cov:Steer} consists of infinite-horizon trajectories. 
We focus on the class of problems for which the limiting trajectories admit a well-defined pointwise limit as $t\to+\infty$. 
This motivates the following assumption. 
Below,  \(\{\Sigma_{\tf}(\cdot)\}_{\tf>0}\) denotes the family of finite-horizon optimal covariance trajectories indexed by the prescribed terminal time \(\tf\).
That is, for each fixed final time \(\tf>0\),  \([0,\tf]\ni t\mapsto \Sigma_{\tf}(t)\) 
denotes the optimal covariance trajectory of the fixed-final time version of \Cref{prob:Det:Cov:Steer} on the interval \([0,\tf]\), whenever such an optimizer exists.

\begin{assumption}[Asymptotic point convergence]\label{assump:point_limit}
Consider the family of optimal covariance trajectories $\{\Sigma_{\tf}(\cdot)\}_{\tf>0}$ that solve the fixed-final time version of \Cref{prob:Det:Cov:Steer}. We assume that the limiting trajectory $t\mapsto \Sigma_{\star}(t)$ of $\{\Sigma_{\tf}(\cdot)\}$ obtained as $\tf\to+\infty$ has a bounded positive-definite limit $\Sigma_\infty\succ 0$, that is,
\begin{align*}
    \lim_{t\to+\infty}\Sigma_{\star}(t)=\Sigma_\infty.
\end{align*}
\end{assumption}

\begin{remark}\label{rem:on:Assump:III.1}
Assumption \ref{assump:point_limit} will be employed only in the analysis of sufficiently large horizons in Section~\ref{Section_5} to justify the asymptotic analysis, and to ensure that the corresponding infinite-horizon limiting trajectories admit a well-defined covariance limit. 
In the turnpike literature (see, e.g., \cite{turnpike_survey}  for a recent survey), more complicated limiting behaviors, including periodic and even chaotic invariant motions, have been reported. 
For example, oscillatory infinite-horizon behavior in Riccati dynamics is discussed in~\cite{riccati_inf}.
Establishing sufficient conditions under which the Assumption~\ref{assump:point_limit} holds for \Cref{prob:Det:Cov:Steer} is beyond the scope of this paper. 
\end{remark}

%% file: sections/3.2-Main_Results.tex
\section{Hamiltonian-Zeros: Background and Definitions}\label{Section_4}

In this section, for the readers' convenience, we provide some background on the notion of \emph{optimal free-final time}. To set up the general framework, we first define the value function for a generic \emph{fixed} final time OCP with given endpoint constraints.
We then introduce the notion of \embf{Hamiltonian-zeros}, which characterizes optimality with respect to the final time. Finally, we investigate the finiteness of the Hamiltonian-zeros for \Cref{prob:Det:Cov:Steer}.

\medskip

\begin{definitionbox}[ (Optimal Final Time)] 
Consider the deterministic controlled system 
\begin{align*}
\dot{x}(t) = f(x(t),u(t))\quad \text{for } t\ge 0,   
\end{align*}
with the endpoint constraints $x(0)\Let x_0$ and $x(\tf) \Let \xf$; $t \mapsto x(t)\in\mathbb{R}^n$ and $t \mapsto u(t)\in\mathbb{R}^m$ are the state and the control trajectories, respectively. 
Let hte vector field $f: \mathbb{R}^n \times \mathbb{R}^m \to \mathbb{R}^n$ be Lipschitz, and let $\tf>0$ be a given final time, and
define the fixed-final time value function
\begin{align} \label{value_obj}
   V(\tf)\Let\inf_{u(\cdot) \in \mathcal{U}} {J}(\tf, u(\cdot)).
\end{align}
Then, the optimal free-final time associated with the cost $J(\tf, u(\cdot))$ is defined by $\tf^* \in \arginf\limits_{\tf > 0} V(\tf)$.
\end{definitionbox}

For the OCP \eqref{value_obj}, we recall the standard fact \cite[Chapter~V, Section~5.1]{ref3}:

\begin{fact} \label{ham_val_rel_lemma}
Consider the OCP \eqref{value_obj} along with its data. Suppose that \(t \mapsto V(t)\) is differentiable at \(\tf\) and let \(t \mapsto \bigl(x_{\tf}^*(t),u_{\tf}^*(t),p_{\tf}^*(t)\bigr)\) be a normal optimal extremal associated with the OCP \eqref{value_obj}. Let \(\hamil_{\tf}\) be the corresponding Hamiltonian. 
Then, \(V(\cdot)\) satisfies
\begin{equation} \label{dV_H_rel}
\frac{\d V}{\d \tf}(\tf)=\hamil_{\tf}\bigl(\xf, u_{\tf}^*(\tf), p_{\tf}^*(\tf)\bigr).
\end{equation}
\end{fact}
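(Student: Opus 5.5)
The plan is to differentiate the value function along a family of optimal extremals, using the PMP adjoint equation and the fixed terminal constraint, and to see that every term cancels except the Hamiltonian at the final time.

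\emph{Setup.} Write the running cost as $L(x,u)$, so that $J(\tf,u)=\int_0^{\tf}L(x,u)\,\d t$. The normal Hamiltonian is
\begin{align*}
\hamil(x,u,p)=L(x,u)+p\t f(x,u).
\end{align*}
Fix $\tf$ and consider a nearby horizon $\tf+h$. By the sensitivity interpretation of the multiplier for the terminal constraint, we have
\begin{align*}
V(\tf+h)-V(\tf)=\hamil\bigl(\xf,u^*_{\tf}(\tf),p^*_{\tf}(\tf)\bigr)h+o(h),
\end{align*}
and this expansion is what we want to derive.

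\emph{Step 1 (generalized value).} I would introduce the two-argument value $W(\tau,\xi)$. It is the infimum of the cost over $[0,\tau]$ subject to $x(0)=x_0$ and $x(\tau)=\xi$, so that $V(\tf)=W(\tf,\xf)$. Under the normality assumption and differentiability, the standard Hamilton–Jacobi identities for the forward (cost-to-come) value apply:
\begin{align*}
\partial_\xi W(\tf,\xf)&=p^*_{\tf}(\tf),\\
\partial_\tau W(\tf,\xf)&=-\min_u\bigl[\,L(\xf,u)-p^*_{\tf}(\tf)\t f(\xf,u)\,\bigr],
\end{align*}
with the sign conventions of \cite{ref3}.

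\emph{Step 2 (direct verification).} I would instead argue directly, which avoids needing differentiability of $W$ in $\xi$. Let $\delta u$ be a control variation and compute
\begin{align*}
\frac{\d}{\d \tf}\int_0^{\tf}L\,\d t = L\bigl(\xf,u^*(\tf)\bigr)+\int_0^{\tf}\partial_{\tf}L\,\d t.
\end{align*}
Adjoin the dynamics with the costate: add $\int_0^{\tf} p\t\bigl(f-\dot x\bigr)\,\d t$, which is zero, and integrate $-p\t\dot x$ by parts. The PMP conditions $\dot p=-\partial_x\hamil$ and $\partial_u\hamil=0$ then remove the interior first-order terms. What remains are boundary terms: $p(0)\t\delta x(0)=0$, since the initial state is fixed, and $p(\tf)\t\delta x(\tf)$. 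For the terminal term, keeping $x(\tf+h)=\xf$ forces
\begin{align*}
\delta x(\tf)=-f\bigl(\xf,u^*(\tf)\bigr)h+o(h).
\end{align*}
Collecting the terms gives $V'(\tf)=L+p\t f=\hamil$ evaluated at the terminal point, with the sign fixed by the convention $\hamil=L+p\t f$ used in \eqref{final_ham_dyn}.

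\emph{Main obstacle.} The delicate step is justifying that the variation of the optimal control family with $\tf$ is regular enough to differentiate under the integral. Here I would rely on the assumed differentiability of $V$ together with an envelope-type argument. Take an optimal $u^*_{\tf}$ and construct admissible competitors on $[0,\tf+h]$: for $h<0$ truncate, and for $h>0$ extend. In each case, correct the terminal mismatch with a small control perturbation, which is available by normality, since normality makes the endpoint map open to first order. Each competitor gives an upper bound on $V(\tf+h)$, so the one-sided bounds read
\begin{align*}
V(\tf+h)&\le V(\tf)+\hamil h+o(h)\quad\text{for } h>0,\\
V(\tf+h)&\le V(\tf)+\hamil h+o(h)\quad\text{for } h<0.
\end{align*}
Dividing the first by $h>0$ gives $V'(\tf)\le\hamil$, and dividing the second by $h<0$ reverses the inequality and gives $V'(\tf)\ge\hamil$. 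Since $V$ is differentiable at $\tf$, the two one-sided derivatives coincide, and therefore $V'(\tf)=\hamil$.
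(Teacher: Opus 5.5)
\paragraph{Assessment.} The paper gives no proof of this Fact; it only cites the dynamic-programming identity in \cite{ref3}. Your route is therefore genuinely different. Its architecture is sound: truncate or extend $u^*_{\tf}$, repair the terminal mismatch, price the repair with the adjoint (using $\delta J=-p(\tf)^\top\delta x(\tf)$ at fixed horizon), and squeeze the two one-sided upper bounds using differentiability of $V$.

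\paragraph{The gap.} The load-bearing justification, ``normality makes the endpoint map open to first order,'' is false. Normality only means that a multiplier with $\lambda_0=1$ exists. It does not exclude a simultaneous nonzero $q$ with $\dot q=-f_x^\top q$ and $f_u^\top q\equiv 0$ along $(x^*,u^*)$. For unconstrained controls, the differential $\mathrm{D}E(u^*)$ of $E:u\mapsto x_u(\tf)$ is onto exactly when no such $q$ exists, i.e.\ when the linearization along $x^*$ is controllable. Suppose such a $q$ exists with $q(\tf)^\top f(\xf,u^*(\tf))\neq 0$. Then the mismatch $\mp f h$ is not in the range of $\mathrm{D}E(u^*)$, so no $O(h)$ repair exists and your upper bounds on $V(\tf+h)$ are unavailable. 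Worse, $p+q$ satisfies the same first-order conditions as $p$ (and is again a normal multiplier whenever $q$ is an abnormal one). Its terminal Hamiltonian is shifted by $q(\tf)^\top f(\xf,u^*(\tf))$. So under mere normality the identity \eqref{dV_H_rel} cannot hold for every choice of normal extremal.

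\paragraph{How to close it.} Replace ``normal'' by ``regular'': no abnormal multiplier, equivalently $\mathrm{D}E(u^*)$ onto. Pick directions $v_1,\dots,v_n$ whose images under $\mathrm{D}E(u^*)$ span $\mathbb{R}^n$. The finite-dimensional implicit function theorem then gives an exact repair of size $O(|h|)$ with cost $p(\tf)^\top f h+o(h)$. You also need left continuity of $u^*$ at $\tf$, so that truncation or extension moves the endpoint by $fh+o(h)$ and the cost by $Lh+o(h)$. With these, your squeeze yields $V'(\tf)=L+p^\top f=\hamil$.

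Regularity does hold where the paper actually uses the Fact. For \eqref{det_opt_problem_cov_prop}, an abnormal $\Lambda\in\mathbb{S}^n$ must satisfy $\Sigma\Lambda B=0$, hence $\Lambda B=0$ and $\dot\Lambda=-A^\top\Lambda-\Lambda A$. This gives $\Lambda A^kB=0$ for all $k$, so $\Lambda=0$ by controllability of $(A,B)$.

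\paragraph{Step 1.} Delete it rather than leave it in. With $\hamil=L+p^\top f$, the cost-to-come value satisfies $\partial_\xi W=-p$ and $\partial_\tau W=\min_u\hamil(\xi,u,-\partial_\xi W)$. Your two identities instead combine to $\max_u(p^\top f-L)$, which is not $\hamil$.
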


Adapting Fact~\ref{ham_val_rel_lemma} to our setting and leveraging the necessary optimality condition in Theorem~\ref{Theorem1}, we obtain the immediate corollary.

\begin{myOCP}
\begin{corollary} \label{cor_dv_H_0}
Consider Problem \Cref{prob:Det:Cov:Steer}. For each fixed \(\tf>0\), let $\ V(\tf)\Let\inf_{K \in \mathcal{K}} J^{\rm P2}(\tf, K)$ denote the value function of the fixed-final time version of \Cref{prob:Det:Cov:Steer}, where $J^{\rm P2}(\cdot)$ is as in \eqref{det_opt_problem}. Assume that $V(\cdot)$ is differentiable at \(\tf\). 
Suppose that an optimal final time $\tf^* < + \infty$ exists for \Cref{prob:Det:Cov:Steer}, and let $\bigl(\Sigma_{\tf^*}(\cdot),\Pi_{\tf^*}(\cdot)\bigr)$ be the associated optimal state and co-state trajectory.
Then,
\begin{align} \label{eq:dv-H}
    \frac{\d V}{\d \tf}(\tf^*)=\hamil_{\tf^*} \bigl(\Sigma_{\tf^*}(\tf^*), \Pi_{\tf^*}(\tf^*)\bigr)=0,
\end{align}
and $\tf^* = \arg\inf\limits_{\tf > 0} V(\tf)$.
\end{corollary}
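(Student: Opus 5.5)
The plan is to combine Fact~\ref{ham_val_rel_lemma}, specialized to \Cref{prob:Det:Cov:Steer}, with the transversality condition of Theorem~\ref{Theorem1}. The state is taken to be $\Sigma$ and the control the gain $K$. The costate is the symmetric matrix $\Pi$, paired with $\dot\Sigma$ through the trace inner product.

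\textbf{Step 1 (identify the Hamiltonian).} Write the pre-Hamiltonian
\begin{align*}
\half\varpi+\trace\bigl(\half Q\Sigma+\half K\t RK\Sigma\bigr)+\trace\bigl(\Pi((A+BK)\Sigma+\Sigma(A+BK)\t+D)\bigr).
\end{align*}
Minimizing over $K$ gives $K=-R^{-1}B\t\Pi$. This uses $\Sigma\succ0$ and the symmetry of $\Pi$; the trace terms involving $\Pi$ should be normalized consistently with \eqref{final_ham_dyn}. Substituting this $K$ back reproduces exactly $\hamil_{\tf}$ in \eqref{final_ham_dyn}, with costate dynamics \eqref{final_ricc_dyn}. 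The extremal provided by Fact~\ref{rem:fixed_horizon_wellposedness} is normal, since the boundary value problem is uniquely solvable with a finite $\Pi(0)$. Hence Fact~\ref{ham_val_rel_lemma} applies at any $\tf$ where $V$ is differentiable, giving
\begin{align*}
V'(\tf)=\hamil_{\tf}\bigl(\Sigma_{\tf}(\tf),\Pi_{\tf}(\tf)\bigr).
\end{align*}

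\textbf{Step 2 (zero at the optimum).} Let $\tf^*<+\infty$ be optimal. By Theorem~\ref{Theorem1} applied with $\tfc=\tf^*$, the Hamiltonian vanishes on all of $[0,\tf^*]$, in particular at $t=\tf^*$. This gives the second equality in \eqref{eq:dv-H}. As an independent check, $\tf^*$ minimizes $V$ over the open set $(0,+\infty)$ and $V$ is differentiable there, so Fermat's rule gives $V'(\tf^*)=0$, consistent with Step 1. The final claim $\tf^*=\arg\inf_{\tf>0}V(\tf)$ is then just the definition of optimal final time. Equivalence of \Cref{prob:Det:Cov:Steer} with the fixed-horizon optimization for every $\tf$ (see the Remark after \Cref{prob:Det:Cov:Steer}) lets me write the free-time infimum as $\inf_{\tf}V(\tf)$.

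\textbf{Main obstacle.} The delicate point is verifying the hypotheses of Fact~\ref{ham_val_rel_lemma} in this matrix setting. The extremal must be normal; abnormal multipliers would have to be excluded using controllability, Assumption~\ref{assump:standing:1}. The Hamiltonian must also be constant along the extremal, so that evaluating it at $t=\tf$ is meaningful. I would establish constancy first by direct differentiation: compute $\frac{\d}{\d t}\hamil(\Sigma(t),\Pi(t))$ using \eqref{det_opt_problem_cov_prop} with the optimal $K$ and \eqref{final_ricc_dyn}, and check that the terms cancel by the trace cyclicity. The problem is autonomous, so this should go through. With constancy in hand, Theorem~\ref{Theorem1} and Fermat's rule together close the argument.
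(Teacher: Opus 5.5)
Your proposal is correct and follows the paper's own route: the paper obtains the corollary in one line by adapting Fact~\ref{ham_val_rel_lemma} to get $\frac{\d V}{\d \tf}=\hamil_{\tf}$ at the terminal point, and by invoking the transversality condition of Theorem~\ref{Theorem1} for the vanishing. This is exactly your Steps~1--2. Your additional checks are sound refinements that the paper leaves implicit: the Fermat-rule argument, the exclusion of abnormal multipliers via controllability, and constancy of the Hamiltonian. The normalization you allude to amounts to pairing the costate as $\half\trace(\Pi\dot\Sigma)$; only with this pairing do you get $K=-R^{-1}B\t\Pi$ and \eqref{final_ham_dyn}, whereas your pre-Hamiltonian as written gives $K=-2R^{-1}B\t\Pi$.
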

\end{myOCP}

Throughout this paper, we employ $\tf^*$ to denote the global optimal final time solution, and $\tfc$  to denote a candidate optimal solution that satisfies (possibly asymptotically)  the transversality condition \eqref{total_hamiltonian}.
With a slight abuse of notation, we will use the shorthand 
$\hamil_{\tf}(\cdot)$ to denote the optimal Hamiltonian corresponding to the \textit{fixed} final time $\tf$ problem. We now introduce one of the central objects of this article. 
\begin{definitionbox}[ (Hamiltonian-zeros)] \label{Def:HamZeros}
Consider \Cref{prob:Det:Cov:Steer} and suppose that Assumption~\ref{assump:standing} holds.
Then,
   \begin{enumerate}[label={\textup{(\ref{Def:HamZeros}-\alph*)}}, leftmargin=*, widest=b, align=left]
        \item \label{Def:HamZeros:a} A candidate final time $\tfc \in (0,+\infty)$ is called a \textbf{Hamiltonian-zero} if the transversality condition \eqref{total_hamiltonian} is satisfied.
             
        \item \label{Def:HamZeros:b} A candidate final time $\tfc=+\infty$ is called an \textbf{asymptotic Hamiltonian-zero} if $\lim\limits_{\tf \to +\infty}\hamil_{\tf}(\cdot) = 0$.
        \end{enumerate}
\end{definitionbox}

In the case of an asymptotic Hamiltonian-zero, the vanishing of the Hamiltonian is understood as a limiting property of the \emph{family} of finite-horizon optimal solutions to the fixed final time version of \Cref{prob:Det:Cov:Steer} as $\tf \to +\infty$. More importantly, the existence of an asymptotic Hamiltonian-zero does not ensure the existence of an optimal controller for the corresponding infinite-horizon problem that satisfies the prescribed terminal covariance boundary condition.

\begin{remark}[Relation with Infinite-horizon Optimality Notions]
\label{rem:inf_notions}
The distinction made in the preceding paragraph, between treating $\tfc=+\infty$ as a limiting final-time candidate and requiring the existence of an infinite-horizon optimal controller is standard in the infinite-horizon optimal control literature~\cite{Carlson1991}.
Specifically, when the cost functional is defined through an improper integral, or when the terminal conditions are imposed only asymptotically, classical optimality may fail to be attained or may not be the most appropriate comparison criterion.
In such cases, notions such as \emph{overtaking} and \emph{weak overtaking} optimality are commonly employed~\cite{Carlson1991}.
For such problems, the related boundary conditions at infinity have been studied for asymptotic terminal constraints and state-constrained problems~\cite{Khlopin2023,Basco2018}. 
We do not formulate \Cref{prob:Det:Cov:Steer} in terms of these criteria. 
Rather, we mention these infinite-horizon optimality notions  only to clarify why an asymptotic Hamiltonian-zero should be viewed as a limiting finite-horizon object rather than necessarily as the result of an attained infinite-horizon optimizer.
\end{remark}

We are now ready to state a finiteness criterion for candidate optimal final times of \Cref{prob:Det:Cov:Steer}. 
The theorem is stated here for clarity; its proof relies on the small-time and large-time Hamiltonian asymptotics developed in the subsequent sections. 
The intuition behind this result stems from Corollary~\ref{cor_dv_H_0}, which states that any finite local minimizer of the fixed-final time value function must satisfy the Hamiltonian-zero condition \eqref{eq:dv-H}. 
Consequently, if the mapping $\tf \mapsto \hamil_{\tf}$ is continuous, a Hamiltonian-zero exists if one can show that $\hamil_{\tf}<0$ for all sufficiently small $\tf>0$, while $\hamil_{\tf}>0$ for all sufficiently large $\tf$. 
The following theorem gives simple sufficient conditions under which $\hamil_{\tf}(\cdot)$ changes sign.

\begin{myOCP}
 \begin{theorem}[Sufficiency for Finiteness of Free-Final Time]
\label{thm:p1d-main}
Consider \Cref{prob:Det:Cov:Steer} along with its data and notation, and let Assumptions~\ref{assump:standing} and ~\ref{assump:point_limit} hold.
Assume that one of the following conditions holds:
\begin{enumerate}[label={\textup{(\ref{thm:p1d-main}-\alph*)}}, leftmargin=*, widest=b, align=left]
    \item \label{thrm:suff:finite:1} $\varpi>0$,
    \item \label{thrm:suff:finite:2} $Q \succeq 0$ and $Q \neq 0$,
    \item \label{thrm:suff:finite:3} The Lyapunov equation $A \Sigma_{\rm s} + \Sigma_{\rm s} A\t + D=0$ admits no positive-definite solution $\Sigma_{\rm s}$.
\end{enumerate}
Then, \Cref{prob:Det:Cov:Steer} admits at least one finite candidate free-final time minimizer $\tfc$. 
\end{theorem}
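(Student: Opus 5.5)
The argument is an intermediate-value argument on the scalar map $\tf\mapsto h(\tf)\Let\hamil_{\tf}$, the constant value of the optimal Hamiltonian for the fixed-final time version of \Cref{prob:Det:Cov:Steer}. By Corollary~\ref{cor_dv_H_0} and Fact~\ref{ham_val_rel_lemma}, $h(\tf)=V'(\tf)$ wherever $V$ is differentiable. The plan has three parts. First, continuity of $h$ on $(0,+\infty)$. Second, $h(\tf)<0$ for all sufficiently small $\tf$. Third, $h(\tf)>0$ for all sufficiently large $\tf$ under any one of \ref{thrm:suff:finite:1}--\ref{thrm:suff:finite:3}. A zero of $h$ then yields a finite Hamiltonian-zero $\tfc$ in the sense of Definition~\ref{Def:HamZeros}. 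Equivalently, $V$ decreases near $0$ and increases for large $\tf$, so $V$ attains an interior minimum. Both routes give the claim.

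For continuity, I would use Fact~\ref{rem:fixed_horizon_wellposedness}. The map $\Pi(0)\mapsto\Sigma(\tf)$ is a homeomorphism, and it depends continuously on $\tf$ because the Riccati flow \eqref{final_ricc_dyn} and the linear covariance flow depend continuously on time. Hence the unique $\Pi_{\tf}(0)$ reaching $\fincov$ varies continuously with $\tf$, and therefore so does $h(\tf)=\hamil_{\tf}(\initcov,\Pi_{\tf}(0))$.

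For small $\tf$, I would use the explicit small-time structure. Reaching $\fincov\neq\initcov$ in time $\tf\to0$ forces a gain of order $1/\tf$, so $\|\Pi_{\tf}\|\sim 1/\tf$. In $\hamil$, the quadratic term $-\half\trace(\Pi BR^{-1}B\t\Pi\Sigma)$ is of order $-1/\tf^{2}$, while the linear terms are only $O(1/\tf)$. Hence $h(\tf)\to-\infty$. An alternative is to note that $V(\tf)\ge c/\tf$ for some $c>0$, since the minimum-energy cost on a short horizon blows up; $V$ then decreases near $0$, which again gives $h<0$ somewhere.

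For large $\tf$, I would invoke Assumption~\ref{assump:point_limit}: the limiting trajectory settles at $\Sigma_\infty\succ0$, which must satisfy $(A+BK_\infty)\Sigma_\infty+\Sigma_\infty(A+BK_\infty)\t+D=0$ with a stationary gain $K_\infty$. Under \ref{thrm:suff:finite:1} or \ref{thrm:suff:finite:2}, the running cost $\half(\varpi+\trace(Q\Sigma_\infty)+\trace(K_\infty\t RK_\infty\Sigma_\infty))$ is strictly positive, using $\Sigma_\infty\succ0$ and $Q\ne0$. Since the trajectory stays near $\Sigma_\infty$ on most of the horizon (the turnpike), $V(\tf)$ grows linearly in $\tf$, and $V'=h$ is then eventually positive. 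Under \ref{thrm:suff:finite:3}, $\Sigma_\infty$ cannot be a zero-control equilibrium, so $K_\infty\neq0$ and the control term gives a strictly positive stationary cost. The same linear growth follows.

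The main obstacle is turning "linear growth of $V$" into "$h(\tf)>0$ for all large $\tf$", or at least into the existence of a sign change of $h$. I would first try to avoid pointwise control of $h$ altogether. Because $V(\tf)\to+\infty$ both as $\tf\to0$ and as $\tf\to\infty$, and $V$ is continuous, $V$ attains a finite minimizer $\tf^*$. At such a point Corollary~\ref{cor_dv_H_0} gives the Hamiltonian-zero, provided differentiability holds there. Continuity of $h$ from the first part, together with $V(b)-V(a)=\int_a^b h$, would supply that differentiability.
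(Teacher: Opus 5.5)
Your primary plan (the intermediate value theorem on $\tf\mapsto\hamil_{\tf}$, using continuity, $\hamil_{\tf}\to-\infty$ as $\tf\to0^+$, and positivity for large $\tf$) is exactly the paper's. The step you flag as the obstacle is where the paper does its work, and it never goes through growth of $V$. Lemma~\ref{lem_empty_gateway} uses two facts. First, $\hamil$ is conserved along each extremal. Second, $\hamil=\tfrac12\varpi+\tfrac12\trace(\Pi\dot\Sigma)+\tfrac12\trace(Q\Sigma)+\tfrac12\trace(\Pi BR^{-1}B\t\Pi\Sigma)$. This identity is evaluated on the forward limiting arc as $t\to+\infty$, where $\dot\Sigma\to0$ by Barbalat. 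So $\hamil^\infty$ is half the limiting running cost, and it exceeds $\varpi/2$ unless the arc approaches $\mathcal G_\infty$ (Lemmas~\ref{lem_empty_gateway} and~\ref{inf_hor_cs}). Moreover, $\mathcal G_\infty=\varnothing$ under \ref{thrm:suff:finite:2} or \ref{thrm:suff:finite:3}.

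Your fallback ($V\to+\infty$ at both ends, hence a finite global minimizer) is a genuinely different and viable route. It needs neither existence nor sign of $\lim_{\tf\to\infty}\hamil_{\tf}$, so no Barbalat argument and no control of $\Pi_\star$ at infinity. It also yields an actual optimal final time rather than only a Hamiltonian-zero. As written, however, two of its steps do not hold up.

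\emph{The stationary gain and the turnpike.} Your $K_\infty$ is not available, because Assumption~\ref{assump:point_limit} gives convergence of $\Sigma_\star$ only, not of $\Pi_\star$. The turnpike claim is also unproven in the case $\mathcal G_\infty=\varnothing$. You need neither.
\begin{itemize}
\item Extract a forward limiting arc $(\Sigma_\star,\Pi_\star)$ as in Step~1 of the proof of Theorem~\ref{prop_exist_AC}. Uniform convergence on $[0,T]$ gives $\liminf_{\tf\to\infty}V(\tf)\ge\tfrac12\int_0^T g_\star\,\mathrm{d}t$ for every $T$, where $g_\star$ is the running cost along the arc.
\item It therefore suffices that $\int_0^\infty g_\star\,\mathrm{d}t=+\infty$. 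Under \ref{thrm:suff:finite:1}--\ref{thrm:suff:finite:2} this follows from $g_\star\ge\varpi+\trace(Q\Sigma_\star)\to\varpi+\trace(Q\Sigma_\infty)>0$.
\item Under \ref{thrm:suff:finite:3}, suppose $\int_0^\infty g_\star<\infty$. Since $\Sigma_\star\succcurlyeq\varepsilon I_n$ for large $t$, this gives $B\t\Pi_\star\in L^2$. By Cauchy--Schwarz, $\int_s^{s+1}\mathcal S(BR^{-1}B\t\Pi_\star\Sigma_\star)\,\mathrm{d}t\to0$ as $s\to\infty$.
\item Integrating the covariance equation over $[s,s+1]$ and letting $s\to\infty$ then forces $A\Sigma_\infty+\Sigma_\infty A\t+D=0$ with $\Sigma_\infty\succ0$, which contradicts \ref{thrm:suff:finite:3}.
\end{itemize}

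\emph{Differentiability of $V$.} Your argument here is circular. The identity $V(b)-V(a)=\int_a^b h$ presupposes that $V$ is absolutely continuous with $V'=h$. Fact~\ref{ham_val_rel_lemma} only gives $V'=\hamil$ at points where $V$ is already known to be differentiable. You can bypass this entirely:
\begin{itemize}
\item $V$ is continuous: write it in normalized time and use Lemma~\ref{lem:reg:init_cost}.
\item Hence a minimizer $\tf^*\in(0,+\infty)$ exists, and $(\tf^*,K^*_{\tf^*})$ solves \Cref{prob:Det:Cov:Steer}.
\item The free-final-time PMP in Theorem~\ref{Theorem1} then yields $\hamil_{\tf^*}=0$ directly, with no differentiability of $V$ needed.
\end{itemize}
Near $\tf=0$ this route uses only $V\to+\infty$, which your short-horizon energy bound supplies. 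Your scaling $\|\Pi_{\tf}\|\sim1/\tf$ is correct only under full actuation. For underactuated pairs the blow-up rates are governed by the controllability index, as the paper's rate comparison reflects.
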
   
\end{myOCP}
The proof of the theorem requires additional technical results developed in Section~\ref{Section_5} and Appendix~\ref{sec:appen:b}, and thus is given in Appendix~\ref{Appendix_Thm}.

Next, we introduce the associated concepts of \embf{Asymptotic Gateways} (AGs) and \embf{Asymptotic Connections} (ACs) to characterize the behavior of candidate Hamiltonian-zero solutions $\tfc$, 
and study the asymptotic behavior of the Hamiltonian for \Cref{prob:Det:Cov:Steer}. The supporting boundedness, continuity, and compactness properties of the solutions of the fixed-time version of \Cref{prob:Det:Cov:Steer} are technical and are given in Appendix~\ref{sec:appen:b}.

%% file: sections/3.3-Main_Results.tex
\section{Hamiltonian-Zeros of Covariance Steering} \label{Section_5}
In this section, we study the large-horizon behavior of Hamiltonian-zeros for \Cref{prob:Det:Cov:Steer}. 
The central question is whether a candidate Hamiltonian-zero can escape to the asymptotic regime \(\tf=+\infty\), or whether it must occur at a finite final time.
We show that this dichotomy is governed by a stationary covariance set associated with the uncontrolled covariance dynamics. 
Since this section represents the \emph{technical core} of the article, we first provide an overview and structure of the results in the sequel. 
\begin{enumerate}[label={\textup{(\ref{Section_5}-\alph*)}}, leftmargin=*, widest=b, align=left]
\item 
We first study the covariance--Riccati system for the fixed-final time problem and show that finite-horizon covariance trajectories remain in \(\mathbb{S}^n_{++}\). The accompanying continuity, boundedness, and compactness properties of the co-state/covariance trajectories, needed to extract limiting arcs as \(\tf\to+\infty\), are technical and are deferred to Appendix~\ref{sec:appen:b} (Lemma~\ref{lem:reg:init_cost} and Proposition~\ref{cor:cov_costate_compactness}).
\item 
We introduce the \emph{Asymptotic Gateway} set \(\asmgate\), which consists of stationary covariance states that incur zero state-running cost.
The main point of this construction is that \(\asmgate\) identifies the stationary covariance states near which a long-horizon trajectory can remain without accumulating additional covariance-running cost when \(\varpi=0\).

\item 
We define \emph{Asymptotic Connections} as limiting covariance trajectories that approach \(\asmgate\). 
These connections describe the \emph{turnpike-type structure} of long-horizon optimal trajectories: \emph{they approach the set  \(\asmgate\), remain near it, and then depart to meet the terminal covariance constraint.}
The primary result here is that Asymptotic Connections give the precise limiting object for finite-horizon optimal trajectories as the final time tends to infinity, without requiring the existence of an attained infinite-horizon optimizer satisfying the terminal covariance constraint.

\item  
We then prove that \(\asmgate \neq\varnothing\), together with \(\varpi=0\), produces an asymptotic Hamiltonian-zero, while \(\asmgate=\varnothing\) rules out such a possibility.
The key result of this part is the \emph{large-horizon dichotomy}: the nonemptiness of \(\asmgate\) is exactly the mechanism by which the Hamiltonian can vanish asymptotically, whereas \(\asmgate=\varnothing\) forces the large-horizon Hamiltonian to remain strictly positive.

\item 
Combining the small-time limit \(\hamil_{\tf}\to-\infty\) as \(\tf\to 0^+\) with the large-horizon limit of \(\hamil_{\tf}\)
shows that \(\tf=+\infty\) is an asymptotic Hamiltonian-zero precisely when \(\varpi=0\) and when there exists \(\Sigma_s\in\mathbb{S}^n_{++}\) such that
\[
A\Sigma_s+\Sigma_sA\t+D=0\quad\text{and}\quad \trace(Q\Sigma_s)=0.
\]
In other words, the asymptotic case can arise only when the uncontrolled covariance dynamics admit a positive-definite stationary covariance on which the running covariance cost vanishes. If this condition fails, then the large-horizon Hamiltonian is positive, while the small-time Hamiltonian is negative, by continuity, a finite Hamiltonian-zero exists.
\end{enumerate}

\subsection{Covariance--Riccati System} \label{sect5_a}

We first study the covariance--Riccati system associated with the fixed-final time version of \Cref{prob:Det:Cov:Steer}. 
To this end, introduce the Hamiltonian matrix
\begin{align} \label{ham_matr}
\Ree^{2n\times 2n} \ni \matHamil \Let \begin{bmatrix}
A & - B R^{-1} B\t \\
- Q & -A\t
\end{bmatrix},
\end{align}
associated with the Riccati--covariance dynamics~\eqref{final_ricc_dyn},~\eqref{final_cov_dyn},
and let the associated state-transition matrix
\begin{align*}
  [0,\tf]^2 \ni (t,\tau)\mapsto  \bar \Phi(t,\tau) \Let \exp\bigl(\mathcal{H}(t-\tau)\bigr) \in \Ree^{2n\times 2n},
\end{align*}
be partitioned as
\begin{align} \label{STM_matr}
\hspace{-3mm} \bar{\Phi}(t,\tau)=\begin{bmatrix}
\Phi_{11}(t,\tau) & \Phi_{12}(t,\tau) \\
\Phi_{21}(t,\tau) & \Phi_{22}(t,\tau)
\end{bmatrix}\,\,\text{with}\,\, \bar{\Phi}(\tau,\tau) = I_{2n},
\end{align}
where ${I}_{2n}\in\mathbb{R}^{2n \times 2n}$ denotes the identity matrix. Then, the covariance trajectory $[0,\tf] \ni t \mapsto \Sigma(t)$ with fixed final time $\tf$ can be expressed as~\cite{cite17}
\begin{align} \label{cov_analytic_stm}
   t \mapsto \Sigma(t) = \bar{\Phi}_{\bar A}(t,0)\initcov  \bar{\Phi}_{\bar A}(t,0)\t 
   + \int_0^t \bar{\Phi}_{\bar A}(t,s) D \bar{\Phi}_{\bar A}(t,s) \t \, \d s,
\end{align}
where $\bar A (t) \Let A -BR^{-1}B\t \Pi (t)$ with $\Pi (t)$ as in \eqref{final_ricc_dyn}, 
and for all \(s,t \in[0,\tf]\),
\begin{align} \label{STM_Cov}
    \bar{\Phi}_{\bar A}(t,s) \Let \Phi_{11}(t,s)+\Phi_{12}(t,s)\Pi(s),
\end{align}
is the state transition matrix of $\bar A(t)$~\cite[Lemma~7]{cite17}.

Covariance trajectories evolve within $\mathbb{S}_{++}^n$ over every finite horizon, provided the boundary covariances satisfy $\initcov \succ0$ and $\fincov \succ0$.
Indeed, since $\bar \Phi_{\bar A}(t,0)$ is nonsingular for all finite $t\in[0,\tf]$ and $D \succcurlyeq 0$, it follows that for any admissible $K\in\mathcal{K}$, the first term from \eqref{cov_analytic_stm} is positive-definite, and the integral term is positive semidefinite. 
Hence, $\Sigma(t)\succ0$ for all $t\in[0,\tf]$. In particular, the optimal covariance trajectory cannot touch the boundary of $\mathbb{S}_+^n$ in finite time. 
This does not, however, exclude the possibility that an infinite-horizon limit is positive semidefinite. 
Stationary covariances of the uncontrolled covariance dynamics satisfy the Lyapunov equation $A\Sigma_{\rm s}+\Sigma_{\rm s}A\t+D=0$, see \eqref{final_cov_dyn}. 
If $A$ is Hurwitz, the Lyapunov equation admits a unique solution, which is positive-definite if and only if $(A,\Gamma)$ is controllable, otherwise, it is singular~\cite[Theorem~12.4]{hespanha2018linear}.
Thus, positive-definiteness of the covariance is preserved along finite-horizon feasible trajectories, while positive-semidefinite (singular) covariances may appear as infinite-horizon stationary limits.

The limiting analysis in the remainder of this section relies on two regularity properties of the fixed-final time optimal covariance/co-state trajectories $\{(\Sigma_{\tf}(\cdot),\Pi_{\tf}(\cdot))\}_{\tf>0}$. 
First, the optimal pair, and consequently the Hamiltonian map $\tf\mapsto\hamil_{\tf}(\cdot)$, depend continuously on the final time (Lemma~\ref{lem:reg:init_cost}). 
Second, the family of optimal pairs is uniformly bounded on compact subintervals of $\tf\in(0,+\infty)$ and, under Assumption~\ref{assump:point_limit}, is locally bounded in time uniformly in $\tf$, so that bounded limiting arcs can be extracted as $\tf\to+\infty$ (Proposition~\ref{cor:cov_costate_compactness}). 
Since these results are technical, their precise statements and proofs are relegated to Appendix~\ref{sec:appen:b}.

Next, we characterize the set associated with the asymptotic Hamiltonian-zeros and the corresponding limiting trajectories, and present the supporting results needed for the proof of the main theorem of this section, Theorem~\ref{thm:cov-steering}, which is stated in Section~\ref{subsect_V_D}.

\subsection{Asymptotic Gateway and Asymptotic Connections} \label{sect_5_subs_a}

In this section, we introduce the notion of Asymptotic Gateways (AG) and Asymptotic Connections (AC), and present the main result establishing the existence of ACs together with convergence of the limiting trajectories to the AG as the final time tends to infinity. 

\medskip

\begin{definitionbox}[ (Asymptotic Gateway)]\label{asympt_gateway_2}
Consider \Cref{prob:Cov:Steer} and \Cref{prob:Det:Cov:Steer} with their data and notations. We refer to the set
\begin{align}
\asmgate  \Let \left\{ \Sigma \in \mathbb{S}_{++}^n  \;\middle\vert\;  
\begin{array}{@{}l@{}}
A\Sigma + \Sigma A\t + D = 0 \text{ and} \\ ~~~\trace(Q\Sigma) = 0
\end{array}
\right\},\nn
\end{align}
as the \textbf{Asymptotic Gateway}.  
\end{definitionbox}

\medskip

\begin{definitionbox}[ (Asymptotic Connections)] \label{asymp_connection_cov}
A limiting trajectory $\Sigma(\cdot)$ obtained from the family of finite-horizon optimal covariance trajectories $\{\Sigma_{\tf}(\cdot)\}_{\tf>0}$ solving the fixed final time version of \Cref{prob:Det:Cov:Steer}, as $\tf\to+\infty$, is an \textbf{Asymptotic Connection} if there exists $t_1\in(0,+\infty]$ such that $\lim_{t\to t_1}\Sigma(t)=\Sigma_{\rm s}\in\mathcal G_\infty$.
\end{definitionbox}

The large-horizon picture is as follows. 
When $\varpi=0$ and the Asymptotic Gateway set $\mathcal{G}_\infty$ is nonempty, sufficiently large-horizon optimal covariance trajectories may approach $\mathcal{G}_\infty$, remain in its neighborhood for a long time interval, and depart from it, only to satisfy the prescribed terminal boundary condition. 
Recall from Remark~\ref{rem:inf_notions} that an asymptotic Hamiltonian-zero can be understood as a limiting property of finite-horizon optimal solutions, and does not necessarily imply the existence of an infinite-horizon optimal control. 
Accordingly, ACs describe the large-horizon structure of finite-horizon optimal covariance trajectories rather than classical infinite-horizon trajectories satisfying exactly the terminal covariance constraint.

In what follows, in view of the compactness argument from Proposition~\ref{cor:cov_costate_compactness}, Theorem~\ref{prop_exist_AC} formalizes the preceding turnpike-type large-horizon observation
by showing that, under Assumption~\ref{assump:point_limit} and $\mathcal G_\infty\neq\varnothing$, the family of finite-horizon optimal covariance trajectories of the fixed-final time version of \Cref{prob:Det:Cov:Steer} admits an Asymptotic Connection, and that the corresponding Hamiltonian converges to zero as $\tf\to+\infty$.

\begin{myOCP}
\begin{theorem} \label{prop_exist_AC}
Consider the fixed final time version of \Cref{prob:Det:Cov:Steer} with final time $\tf$ and $\varpi=0$.
Let Assumptions~\ref{assump:standing} and ~\ref{assump:point_limit} hold, and assume that $\mathcal{G}_\infty\neq\varnothing$.
Then, as $\tf\to+\infty$, the family $\aset[]{\Sigma_{\tf}(\cdot)}_{\tf>0}$ admits an Asymptotic Connection, and the corresponding Hamiltonian converges asymptotically to zero.
In other words, $\tf=+\infty$ is an asymptotic Hamiltonian-zero for \Cref{prob:Det:Cov:Steer} according to Definition~\ref{Def:HamZeros}.
\end{theorem}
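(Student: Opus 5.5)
The proof goes through the value function $V(\tf)$ and the identity $\frac{\d V}{\d\tf}=\hamil_{\tf}$ (Fact~\ref{ham_val_rel_lemma}, Corollary~\ref{cor_dv_H_0}). Along each fixed-horizon optimal arc the Hamiltonian is constant, since the system is autonomous. So $\hamil_{\tf}$ is a scalar function of $\tf$, and by Lemma~\ref{lem:reg:init_cost} it is continuous in $\tf$. The plan has three steps: show that $V$ stays bounded as $\tf\to+\infty$, extract the limiting arc and identify it as an Asymptotic Connection, and then show that the constant Hamiltonian along that arc must vanish.

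\emph{Step 1 (uniform bound on $V$).} Fix $\Sigma_{\rm s}\in\asmgate$. For $\tf\ge 2T$ with $T$ fixed, build a suboptimal gain in three pieces.
\begin{itemize}
\item On $[0,T]$, steer $\initcov$ to $\Sigma_{\rm s}$ using the fixed-horizon optimal gain from Fact~\ref{rem:fixed_horizon_wellposedness}, which is available by controllability.
\item On $[T,\tf-T]$, set $K=0$. Then $\Sigma(t)\equiv\Sigma_{\rm s}$ because $A\Sigma_{\rm s}+\Sigma_{\rm s}A\t+D=0$, and the running cost is $\half\trace(Q\Sigma_{\rm s})=0$, using $\varpi=0$.
\item On $[\tf-T,\tf]$, steer $\Sigma_{\rm s}$ to $\fincov$.
\end{itemize}
The resulting cost is independent of $\tf$, so $\sup_{\tf\ge 2T}V(\tf)\le C<\infty$. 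Since $V\ge 0$, $V$ is bounded on $[2T,\infty)$.

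\emph{Step 2 (limiting arc is an Asymptotic Connection).} By Proposition~\ref{cor:cov_costate_compactness}, the pairs $(\Sigma_{\tf},\Pi_{\tf})$ are locally bounded uniformly in $\tf$. Arzel\`a--Ascoli, applied through the ODEs \eqref{final_ricc_dyn} and \eqref{final_cov_dyn}, then gives a subsequence converging on compacts to a limiting arc $(\Sigma_\star,\Pi_\star)$ on $[0,\infty)$. Assumption~\ref{assump:point_limit} gives $\Sigma_\star(t)\to\Sigma_\infty\succ0$.

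The integrand is nonnegative, so Fatou's lemma and Step 1 give
\[
\int_0^\infty \trace\bigl(Q\Sigma_\star\bigr)+\trace\bigl(K_\star\t RK_\star\Sigma_\star\bigr)\,\d t\le 2C .
\]
Each term therefore has a finite integral. Because $\Sigma_\star\to\Sigma_\infty$ and the vector field is uniformly continuous on bounded sets, this should force the following:
\begin{itemize}
\item $\trace(Q\Sigma_\infty)=0$;
\item $K_\star(t)=-R^{-1}B\t\Pi_\star(t)$ tends to $0$ along the arc, or at least $\trace(\Pi_\star BR^{-1}B\t\Pi_\star\Sigma_\star)\to0$, which forces $B\t\Pi_\star\to0$ since $\Sigma_\infty\succ0$.
\end{itemize}
Passing to the limit in \eqref{final_cov_dyn} with $\dot\Sigma_\star\to0$ then yields $A\Sigma_\infty+\Sigma_\infty A\t+D=0$. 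Hence $\Sigma_\infty\in\asmgate$, and the arc is an Asymptotic Connection with $t_1=+\infty$.

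\emph{Step 3 (Hamiltonian vanishes).} This is the main obstacle. We know $\hamil_{\tf}$ is constant along each finite arc, and by continuity the limit arc carries the constant $h=\lim\hamil_{\tf}$ along the subsequence. As $t\to\infty$ we have $\Sigma_\star\to\Sigma_\infty$, $\varpi=0$, and $\trace(Q\Sigma_\infty)=0$. The remaining terms of \eqref{final_ham_dyn} combine as
\[
\trace\bigl(\Pi(A\Sigma+\half D)\bigr)=\half\trace\bigl(\Pi(A\Sigma+\Sigma A\t+D)\bigr)
\]
by symmetry of $\Pi$, and the right-hand side tends to $0$ provided $\Pi_\star$ stays bounded. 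The control term also tends to $0$ by Step 2. This gives $h=0$.

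The delicate point is boundedness of $\Pi_\star(t)$ as $t\to\infty$, since Proposition~\ref{cor:cov_costate_compactness} only gives local bounds. To handle it, I would first try to write $h$ directly as a Ces\`aro average. Averaging the constant Hamiltonian over $[0,\tf]$ and integrating the $\Pi$-terms by parts against the covariance dynamics gives
\[
h\,\tf=\half\int_0^{\tf}\bigl(\trace(Q\Sigma)-\trace(K\t RK\Sigma)\bigr)\,\d t+\trace\bigl(\Pi\Sigma\bigr)\Big|_0^{\tf}.
\]
The bound $V\le C$ then makes the integral terms $O(1)$. Bounding the boundary term by $O(1)$ as well would give $\hamil_{\tf}=O(1/\tf)\to0$ directly. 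That boundary bound still needs an estimate on $\Pi_{\tf}(0)$ and $\Pi_{\tf}(\tf)$. I would obtain it from $V$ being convex-like and bounded, since $\frac{\d V}{\d\tf}=\hamil_{\tf}$ is constant-sign for large $\tf$ and integrable. Concretely, $\int^\infty\hamil_{\tf}\,\d\tf$ converges because $V$ is bounded, and if $\hamil_{\tf}$ has a limit at all, which Assumption~\ref{assump:point_limit} together with continuity should give, that limit must be $0$. This last monotonic-limit argument is the route I would take if the direct bound on $\Pi$ fails.

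Because the limit $0$ is the same for every convergent subsequence, the full family satisfies $\hamil_{\tf}\to0$ as $\tf\to+\infty$.
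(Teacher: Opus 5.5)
Your Step~1 and the extraction of a limiting arc match the paper's Steps~1--2. There is, however, a genuine gap in Step~2. You infer $B^{\top}\Pi_\star(t)\to0$ and $\dot\Sigma_\star(t)\to0$ from finite cost plus ``uniform continuity of the vector field on bounded sets'', which is a Barbalat argument. That argument needs $(\Sigma_\star,\Pi_\star)$ to stay in a bounded set on all of $[0,\infty)$. Proposition~\ref{cor:cov_costate_compactness} only bounds it on compact intervals. A global bound on $\Pi_\star$ is exactly what you later flag as unresolved, and your Step~3 fallback rescues only the Hamiltonian, not $\Sigma_\infty\in\mathcal{G}_\infty$. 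So, as written, the Asymptotic Connection is not established. (The paper instead gets $\Pi^{+}\to0$ and $\mathrm{dist}(\Sigma^{+},\mathcal G_\infty)\to0$ by LaSalle applied to $\trace(\Sigma\Pi)$ minus the accumulated $\int\trace(\Pi D)$, after first arguing that $\Pi^{+}$ is bounded.) You can close the gap without any bound on $\Pi_\star$:
\begin{itemize}
\item Since $\Sigma_\star\succeq\varepsilon I$ for large $t$, finite cost gives $\int_0^\infty\|B^{\top}\Pi_\star\|^2\,\mathrm{d}t<\infty$.
\item Integrate \eqref{final_cov_dyn} over $[t,t+1]$. Use $\Sigma_\star(t+1)-\Sigma_\star(t)\to0$ and $\int_t^{t+1}\|B^{\top}\Pi_\star\|\,\mathrm{d}s\le\bigl(\int_t^{t+1}\|B^{\top}\Pi_\star\|^2\,\mathrm{d}s\bigr)^{1/2}\to0$. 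This gives $A\Sigma_\infty+\Sigma_\infty A^{\top}+D=0$.
\item $\trace(Q\Sigma_\infty)=0$ is automatic, because $\mathcal G_\infty\neq\varnothing$ forces $Q=0$.
\end{itemize}

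In Step~3, discard the Ces\`aro identity, because it is false. With $\varpi=0$ one has $\hamil=\tfrac12\trace(Q\Sigma)+\tfrac12\trace(\Pi BR^{-1}B^{\top}\Pi\Sigma)+\tfrac12\trace(\Pi\dot\Sigma)$, so $h\,\tf=V(\tf)+\tfrac12\int_0^{\tf}\trace(\Pi\dot\Sigma)\,\mathrm{d}t$. Integrating the last term by parts leaves $\int\trace(\dot\Pi\Sigma)$, which still contains $\trace(\Pi A\Sigma)$, so no $O(1/\tf)$ bound follows. Also drop the claims that $\hamil_{\tf}$ is eventually of one sign and that $\int^\infty\hamil_{\tf}\,\mathrm{d}\tf$ converges; bounded $V$ gives neither.

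Your monotone-limit argument, however, is sound, and it is a genuinely different route from the paper's. If $L=\lim_{\tf\to\infty}\hamil_{\tf}$ exists and $L\neq0$, then $V'=\hamil_{\tf}$ makes $V$ grow or decay linearly, contradicting $0\le V\le C$. You need to make two things explicit:
\begin{itemize}
\item (i) $V$ is $C^1$ with $V'=\hamil_{\tf}$ at every large $\tf$. This follows from the $C^1$ dependence of $\Pi_{\tf}(0)$, obtained by the implicit function theorem in Lemma~\ref{lem:reg:init_cost}, together with Fact~\ref{ham_val_rel_lemma}.
\item (ii) The limit exists. Read Assumption~\ref{assump:point_limit} as convergence of the whole family, as the paper uses it in Proposition~\ref{cor:cov_costate_compactness}. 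Then $\Sigma_{\tf}(\tau)\to\Sigma_\star(\tau)\succ0$ at a fixed $\tau$. By the homeomorphism of Fact~\ref{rem:fixed_horizon_wellposedness}, $\Pi_{\tf}(0)$ converges, hence so does $\hamil_{\tf}=\hamil(\initcov,\Pi_{\tf}(0))$.
\end{itemize}
Compared with the paper's LaSalle argument, your route gives $\hamil_{\tf}\to0$ with no control of $\Pi_\star$ on $[0,\infty)$. It does not give $\Pi_\star\to0$ or the backward arc (the turnpike picture), but the theorem as stated does not need them.
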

\end{myOCP}

The (long) proof of Theorem~\ref{prop_exist_AC} (see Appendix \ref{sec:appen:b} for a detailed proof) is based on a compactness argument together with an invariance analysis of the limiting covariance/co-state trajectories associated with \Cref{prob:Det:Cov:Steer}.
For the readers' convenience, we first briefly summarize the main steps. We split the proof into five parts.
\begin{enumerate}[label={\textup{(\ref{prop_exist_AC}-\alph*)}}, leftmargin=*, widest=b, align=left]
\item In \ref{step:one}, we analyze the forward and backward limiting arcs from the family of finite-horizon optimal covariance/co-state trajectories using the Arzel\`a--Ascoli theorem \cite[Theorem 7.25]{rudin1976}.

\item In \ref{step:two}, using the fact that $\mathcal G_\infty\neq\varnothing$, we construct a feasible comparison trajectory that passes through the gateway $\mathcal G_\infty$ and show that the fixed-final time value function remains bounded for all sufficiently large $\tf$, which yields finiteness of the corresponding infinite-horizon running cost along the limiting arcs.

\item In \ref{step:three}, we apply LaSalle's invariance principle~\cite[Theorem~4.4]{ref_khalil} to the forward and reversed-time limiting trajectories and show that both limiting arcs approach the asymptotic gateway set $\mathcal G_\infty$ defined in Definition \ref{asympt_gateway_2}.

\item In \ref{step:four}, using equiboundedness, equi-Lipschitz continuity of the family of finite-horizon optimal covariance trajectories, and continuous dependence of the covariance/co-state pair on the final time, we show that this limiting behavior can be observed by finite-horizon optimal trajectories for sufficiently large $\tf$.

\item Finally, in \ref{step:five}, continuity of the Hamiltonian with respect to the final time implies that the Hamiltonian converges to zero asymptotically.
\end{enumerate}
\smallskip
Theorem~\ref{prop_exist_AC} addresses the case where the Asymptotic Gateway set $\asmgate$ is nonempty and shows that the limiting optimal trajectories admit an Asymptotic Connection.
The next remark clarifies why the existence of an Asymptotic Connection should not be interpreted as the attainment of an infinite-horizon covariance trajectory.

\begin{remark}[Attainment of an Asymptotic Connection] \label{rem:AC_attainment}
If $\Sigma_{\rm s}\in\mathcal{G}_\infty$ coincides with the prescribed terminal covariance, i.e., $\Sigma_{\rm s}=\fincov $, then the limiting trajectory is consistent with $\lim_{t\to+\infty}\Sigma(t)=\fincov $ and can be viewed as an attained infinite-horizon limit.
If, on the other hand, $\Sigma_{\rm s}\neq\fincov $, then the infinite-horizon solution cannot satisfy the prescribed terminal covariance condition. 
Indeed, at  $\asmgate$, the uncontrolled covariance dynamics satisfy $\dot\Sigma(\cdot)=0$, and hence the infinite-horizon covariance trajectory remains at $\Sigma_{\rm s}$ and cannot converge to a different $\fincov$. 
Thus, an Asymptotic Connection should be viewed only as a limiting covariance trajectory, but not as an attained one, consistent with Remark~\ref{rem:inf_notions}.
\end{remark}

We now turn to the case \(\asmgate=\varnothing\). 
While Theorem~\ref{prop_exist_AC} shows that $\asmgate \not= \varnothing$ implies an asymptotic Hamiltonian-zero, an empty gateway rules out this possibility. Specifically, if $\varpi=0$ and $\asmgate=\varnothing$, then, under Assumptions~\ref{assump:standing} and \ref{assump:point_limit}, one has $\lim_{\tf\to+\infty}\hamil_{\tf}(\cdot)>0$, and hence $\tf=+\infty$ cannot be an asymptotic Hamiltonian-zero (see Lemma~\ref{lem_empty_gateway} in Appendix~\ref{sec:appen:b}).

In the next subsection, we discuss the limiting behavior of Asymptotic Connections and conclude with a simple one-dimensional example illustrating the established concepts.

\subsection{Discussion on Asymptotic Connections} \label{subsect_V_D}

In this section, we interpret the results of Section~\ref{sect_5_subs_a} and discuss the limiting behavior induced by ACs.
We also explain why an asymptotic Hamiltonian-zero does not, in general, imply the existence of an admissible infinite-horizon optimal controller, and conclude with a simple one-dimensional example.

Whenever $\mathcal{G}_\infty\neq\varnothing$, under  Assumption \ref{assump:point_limit}, and from the compactness result of Proposition~\ref{cor:cov_costate_compactness}, a finite-horizon optimal solution of the fixed final time version of \Cref{prob:Det:Cov:Steer} can be \textit{approximated} by a trajectory that approaches an AG, remains in its neighborhood for some time,  and then departs to satisfy the prescribed terminal boundary condition. 
In this sense,  ACs provide useful insights into the behavior of sufficiently \emph{long-horizon optimal solutions}, similar to the classical \emph{turnpike phenomenon} in optimal control; see, e.g.,~\cite{anderson_turnpike}. 
Figure~\ref{fig:AC_turnpike_stitching} illustrates this turnpike interpretation of ACs.
\begin{figure*}[!t]
    \centering
    \includegraphics[scale=0.24]{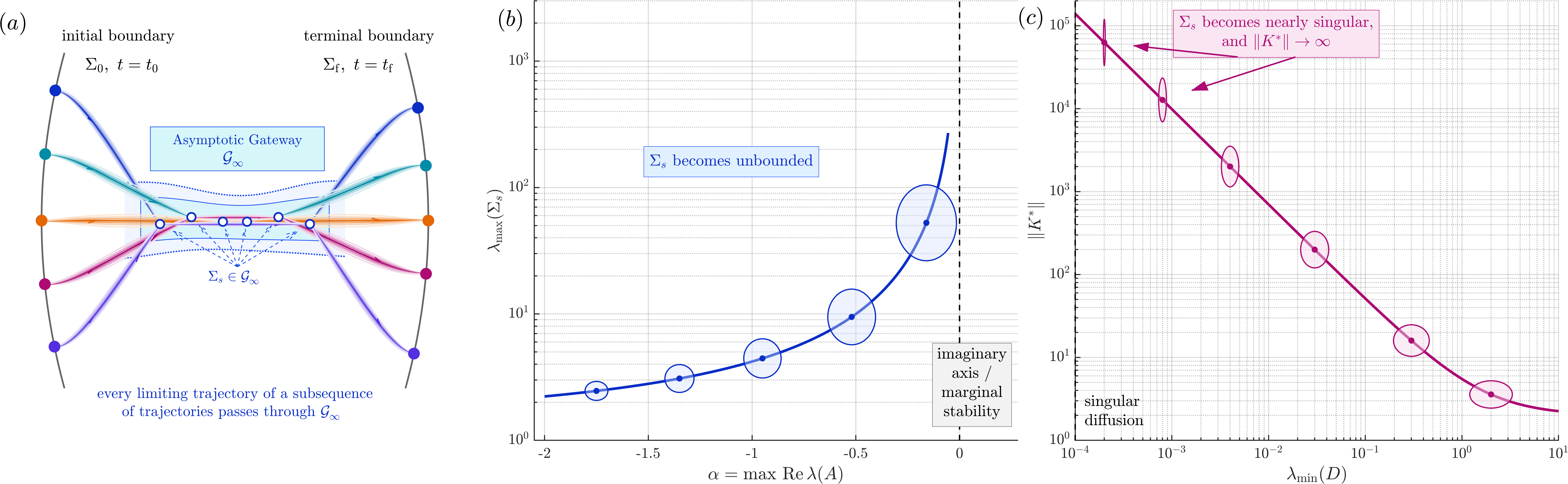}
    \caption{
    Asymptotic connections and limiting regimes of the Asymptotic Gateway $\mathcal{G}_{\infty}$.
    (a) illustrates finite-horizon covariance trajectories whose subsequential limiting trajectories pass through the Asymptotic Gateway $\mathcal{G}_{\infty}$ at stationary covariances $\Sigma_{\rm s}\in\mathcal{G}_{\infty}$.
    (b) illustrates the loss of boundedness of $\Sigma_{\rm s}$ as $A$ approaches marginal stability.
    (c) illustrates the approach of $\Sigma_{\rm s}$ to the boundary of $\mathbb{S}_{+}^{n}$, together with unbounded feedback control, as $D$ becomes nearly singular while $(A,\Gamma)$ remains controllable.
    } 
    \label{fig:asymp_connections}
\end{figure*}
Specifically, depending on the controllability and the spectral properties of the matrix pair \((A,\Gamma)\), the positive-definite stationary solution \(\Sigma_{\rm s}\) of the Lyapunov equation
\begin{align} 
\label{lyap_eqn}
A\Sigma_{\rm s}+\Sigma_{\rm s}A\t+D=0,
\end{align}
may approach the boundary of \(\mathbb S^n_+\).  
For example, when \(D\succcurlyeq0\), \(A\) is Hurwitz, and \(\trace(Q\Sigma_{\rm s})=0\), the matrix \(\Sigma_{\rm s}\) satisfies
$
    \lim_{t\to+\infty}\Sigma(t)=\Sigma_{\rm s}\in\mathcal G_\infty,
$
and $\Sigma_{\rm s}$ may even become singular at the limit. 
When the admissible trajectories of \eqref{final_ricc_dyn}--\eqref{final_cov_dyn} pass arbitrarily close to such singular covariances, the associated control effort may become unbounded near the singularity; see, e.g., \cite{sing_cov}. 

Another possibility is the occurrence of arbitrarily large covariances when some eigenvalues of $\Sigma_{\rm s}$ diverge, as some eigenvalues of $A$ approach the imaginary axis. %
Figure~\ref{fig:asymp_connections} illustrates three related limiting phenomena: the behavior associated with ACs, the loss of boundedness of $\Sigma_{\rm s}$ as $A$ approaches marginal stability, and the approach of $\Sigma_{\rm s}$ to the boundary of $\mathbb{S}_+^n$, together with unbounded feedback control as $D$ becomes nearly singular, while $(A,\Gamma)$ remains controllable.

\begin{example}
Consider the covariance steering \Cref{prob:Det:Cov:Steer} in a one-dimensional setting
with $Q \Let 0$, $\varpi \Let 0$, $D>0$, and $R>0$. 
The stationary covariance $\Sigma_{\rm s}=-{D}/{2A}$ satisfies the Lyapunov equation \eqref{lyap_eqn} 
and belongs to $\mathcal{G}_\infty$ when $A<0$. In this case, the covariance and co-state dynamics admit two candidate solution branches.
\begin{enumerate}[label={\textup{(\(\textsf{B}\)-\alph*)}}, leftmargin=*, widest=b, align=left]

\item \textbf{\textup{Zero co-state branch:}} When $\Pi(\cdot)\equiv 0$, the optimal feedback is $K^*(\cdot)\equiv 0$, and the covariance evolves according to
\begin{align*}
    \Sigma(t)=\Sigma_{\rm s} + (\initcov -\Sigma_{\rm s})e^{2At}\quad \text{for all }t \ge 0.
\end{align*}
For $A<0$, the trajectory converges asymptotically to $\Sigma_{\rm s}$ as $t\to+\infty$, and \eqref{total_hamiltonian} is satisfied.

\item \textbf{\textup{Nonzero co-state branch:}}
When $\Pi(t)\neq 0$ for some $t\ge 0$, the transversality condition \eqref{total_hamiltonian} yields
\begin{align*}
    \Pi(t)=\frac{R}{B^2}\left(2A+\frac{D}{\Sigma(t)}\right)\quad \text{for all }t \ge 0,
\end{align*}
while the covariance evolves as
\begin{align*}
    \Sigma(t)=\Sigma_{\rm s} + (\initcov -\Sigma_{\rm s})e^{-2At}\quad \text{for all }t \ge 0.
\end{align*}
The corresponding optimal feedback is 
\begin{align*}
    K^*(t)=-\frac{1}{B}\left(2A+\frac{D}{\Sigma(t)}\right)\quad \text{for all }t \ge 0.
\end{align*}
For $A<0$, this trajectory moves away from $\Sigma_{\rm s}$ in forward time, while converging asymptotically to $\Sigma_{\rm s}$ in backward time.

\end{enumerate}
\medskip
These two branches correspond to the stable and unstable manifolds of the Hamiltonian system about the equilibrium $(\Sigma_{\rm s},0)$ associated with the covariance and co-state dynamics while satisfying the transversality condition \eqref{total_hamiltonian}.

Assume that $\initcov $ and $\fincov $ lie on the opposite sides of $\Sigma_{\rm s}$. In this case, neither branch yields a finite-time trajectory connecting $\initcov $ and $\fincov $. Indeed, the zero co-state branch approaches $\Sigma_{\rm s}$ asymptotically and cannot cross it in finite time, whereas the nonzero co-state branch moves away from $\Sigma_{\rm s}$ in forward time.

Consequently, the only feasible Hamiltonian-zero solution is asymptotic, i.e., $\tfc=+\infty$. The resulting solution consists of two asymptotic segments: a forward uncontrolled trajectory from $\initcov $ to $\Sigma_{\rm s}$, and a backward controlled trajectory from $\fincov $ to $\Sigma_{\rm s}$, which meet only in the limit.
Moreover, once both trajectories reach the gateway $\Sigma_{\rm s}$, one has $\Pi=K^\ast=\dot{\Sigma}=0$,
and therefore the system cannot depart from $\Sigma_{\rm s}$ to satisfy a terminal condition on the opposite side. Hence, no admissible infinite-horizon optimal controller exists in this case.
On the other hand, if $\fincov =\Sigma_{\rm s}$, then the uncontrolled trajectory with $K^*(\cdot)\equiv0$ provides an admissible infinite-horizon optimal solution.

\end{example}

Having established existence conditions for both finite and asymptotic Hamiltonian-zeros, it remains to record the asymptotic limits of the Hamiltonian $\hamil_{\tf}(\cdot)$ as the final time approaches zero and infinity. 
These limits are established in Lemma~\ref{inf_hor_cs} in Appendix~\ref{sec:appen:b}: as $\tf\to 0^+$, one has $\hamil_{\tf}(\cdot)\to-\infty$, whereas the large-horizon limit $\hamil^\infty(\cdot) \Let \lim_{\tf \to +\infty} \hamil_{\tf}(\cdot)$ exists and satisfies $\hamil^\infty(\cdot)\ge \varpi/2$, with strict inequality whenever $\mathcal{G}_\infty=\varnothing$. 
With these technical ingredients in place, we are now ready to state the main theorem of this section. Its proof relies on Theorem~\ref{prop_exist_AC} with the supporting results of Appendix~\ref{sec:appen:b} (Proposition~\ref{cor:cov_costate_compactness}, Lemma~\ref{lem_empty_gateway}, and Lemma~\ref{inf_hor_cs}), and is presented in Appendix~\ref{sec:appen:b}.

\begin{myOCP}
\begin{theorem}
\label{thm:cov-steering}
Consider the covariance steering \Cref{prob:Det:Cov:Steer} and let Assumptions~\ref{assump:standing} and \ref{assump:point_limit} hold. 
Then, an asymptotic Hamiltonian-zero exists if and only if $\varpi=0$ and, furthermore, there exists $\Sigma_{\rm s} \succ 0$, such that $A\Sigma_{\rm s} + \Sigma_{\rm s} A\t + D = 0$ and $\trace(Q\Sigma_{\rm s}) = 0$.
\end{theorem}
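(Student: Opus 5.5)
The proof splits into the two directions of the equivalence. Throughout, $\mathcal{G}_\infty \neq \varnothing$ is exactly the stated condition on $\Sigma_{\rm s}$, by Definition~\ref{asympt_gateway_2}. So the claim is: an asymptotic Hamiltonian-zero exists (Definition~\ref{Def:HamZeros:b}) if and only if $\varpi=0$ and $\mathcal{G}_\infty\neq\varnothing$.

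\emph{Sufficiency.} Assume $\varpi=0$ and $\mathcal{G}_\infty\neq\varnothing$. Together with Assumptions~\ref{assump:standing} and~\ref{assump:point_limit}, these are precisely the hypotheses of Theorem~\ref{prop_exist_AC}. That theorem gives that the family $\{\Sigma_{\tf}(\cdot)\}$ admits an Asymptotic Connection and that $\hamil_{\tf}(\cdot)\to 0$ as $\tf\to+\infty$. Hence $\tfc=+\infty$ is an asymptotic Hamiltonian-zero by definition, and this direction is a direct invocation.

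\emph{Necessity.} This is the contrapositive. Suppose the condition fails; then either $\varpi>0$, or $\varpi=0$ and $\mathcal{G}_\infty=\varnothing$.

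\textbf{Case $\varpi>0$.} Lemma~\ref{inf_hor_cs} states that the limit $\hamil^\infty=\lim_{\tf\to+\infty}\hamil_{\tf}$ exists and satisfies $\hamil^\infty\ge\varpi/2>0$. So the Hamiltonian cannot tend to zero.

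\textbf{Case $\varpi=0$, $\mathcal{G}_\infty=\varnothing$.} Lemma~\ref{lem_empty_gateway} (equivalently, the strict-inequality clause of Lemma~\ref{inf_hor_cs}) gives $\lim_{\tf\to+\infty}\hamil_{\tf}>0$. Again no asymptotic Hamiltonian-zero exists.

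One point needs care here. By \eqref{total_hamiltonian}, the Hamiltonian is constant along each fixed-horizon extremal: $\Sigma$ and $\Pi$ solve an autonomous Hamiltonian system, so $\hamil$ is a first integral. Therefore ``$\hamil_{\tf}(\cdot)$'' is a well-defined scalar function of $\tf$, and both the limit in Definition~\ref{Def:HamZeros:b} and the limits in the lemmas refer to this same scalar. I would state this identification explicitly, using continuity of $\tf\mapsto\hamil_{\tf}$ from Lemma~\ref{lem:reg:init_cost}.

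\emph{Main obstacle.} The substantive step is the necessity direction in the case $\varpi=0$ and $\mathcal{G}_\infty=\varnothing$, that is, showing the large-horizon limit is \emph{strictly} positive. If Lemma~\ref{lem_empty_gateway} were not available, I would argue by contradiction. Suppose $\hamil_{\tf}\to 0$. Extract limiting arcs via Proposition~\ref{cor:cov_costate_compactness} (Arzel\`a--Ascoli), and use Assumption~\ref{assump:point_limit} to get a limit $\Sigma_\infty\succ0$. At the limit point, $\dot\Sigma=0$ and $\dot\Pi=0$. The zero Hamiltonian, combined with the bounded cost as in the LaSalle step of Theorem~\ref{prop_exist_AC}, should force $\Pi_\infty B=0$ and $\trace(Q\Sigma_\infty)=0$. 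Then the covariance equation reduces to $A\Sigma_\infty+\Sigma_\infty A\t+D=0$, which places $\Sigma_\infty\in\mathcal{G}_\infty$, a contradiction. The delicate part is justifying that the limit arc carries zero control, i.e.\ finite integrated cost forces $R^{-1}B\t\Pi\to0$ along it.
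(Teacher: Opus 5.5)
Your proposal is correct and follows the paper's proof: sufficiency is a direct appeal to Theorem~\ref{prop_exist_AC}, and necessity is the same two-case contrapositive, using Lemma~\ref{inf_hor_cs} for $\varpi>0$ and Lemma~\ref{lem_empty_gateway} for $\varpi=0$, $\mathcal{G}_\infty=\varnothing$. Your fallback sketch is not needed, and as written it asserts $\dot\Pi=0$ at the limit point, which Assumption~\ref{assump:point_limit} does not give since it controls only $\Sigma$; the paper's Lemma~\ref{lem_empty_gateway} instead relies on $\dot\Sigma\to0$ obtained from Barbalat's lemma.
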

\end{myOCP}

The next result is an immediate consequence of Theorem~\ref{thm:cov-steering}.

\begin{myOCP}
\begin{corollary}
Consider the covariance steering \Cref{prob:Det:Cov:Steer} and let Assumption \ref{assump:point_limit} hold. If $Q \succcurlyeq 0$ with $Q\neq0$ or $\varpi >0$, then there exists at least one finite Hamiltonian-zero.
\end{corollary}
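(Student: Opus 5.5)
The plan is to combine Theorem~\ref{thm:cov-steering} with the two endpoint limits of the Hamiltonian recorded in Lemma~\ref{inf_hor_cs}, and then apply an intermediate-value argument using the continuity of $\tf\mapsto\hamil_{\tf}$ from Lemma~\ref{lem:reg:init_cost}. Throughout we work under Assumptions~\ref{assump:standing} and~\ref{assump:point_limit}, since these are also the hypotheses of the supporting lemmas.

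The first step is to show that the hypothesis of the corollary excludes an asymptotic Hamiltonian-zero. If $\varpi>0$, Theorem~\ref{thm:cov-steering} rules it out directly. If instead $\varpi=0$ and $Q\succcurlyeq0$ with $Q\ne0$, we need to show $\mathcal G_\infty=\varnothing$. Take any $\Sigma_{\rm s}\succ0$. Since $Q\succcurlyeq 0$ and $Q\neq 0$, $Q$ has at least one positive eigenvalue, so
\[
\trace(Q\Sigma_{\rm s})=\trace\bigl(\Sigma_{\rm s}^{1/2}Q\Sigma_{\rm s}^{1/2}\bigr)>0 .
\]
Hence no positive-definite stationary covariance has zero running cost, so $\mathcal G_\infty=\varnothing$, and by Theorem~\ref{thm:cov-steering} again there is no asymptotic Hamiltonian-zero.

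The second step makes this quantitative. By Lemma~\ref{inf_hor_cs}, the limit $\hamil^\infty=\lim_{\tf\to+\infty}\hamil_{\tf}$ exists and satisfies $\hamil^\infty\ge\varpi/2$, with strict inequality when $\mathcal G_\infty=\varnothing$.
\begin{itemize}
\item If $\varpi>0$, then $\hamil^\infty\ge\varpi/2>0$.
\item If $\varpi=0$ and $Q\ne0$, the first step gives $\mathcal G_\infty=\varnothing$, so $\hamil^\infty>0$; this is the content of Lemma~\ref{lem_empty_gateway}.
\end{itemize}
In either case there is $T_2$ with $\hamil_{T_2}>0$. Lemma~\ref{inf_hor_cs} also gives $\hamil_{\tf}\to-\infty$ as $\tf\to0^+$, so there is $T_1<T_2$ with $\hamil_{T_1}<0$. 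Since $\tf\mapsto\hamil_{\tf}$ is continuous on $[T_1,T_2]$, the intermediate value theorem gives some $\tfc\in(T_1,T_2)$ with $\hamil_{\tfc}=0$.

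The last step is to check that this $\tfc$ is a Hamiltonian-zero in the sense of Definition~\ref{Def:HamZeros:a}. This means \eqref{total_hamiltonian} must hold on the whole interval $[0,\tfc]$, not only at the endpoint. Along solutions of \eqref{final_ricc_dyn} and \eqref{final_cov_dyn} the Hamiltonian \eqref{final_ham_dyn} is autonomous, so it is constant in $t$. Its vanishing at one time therefore gives vanishing for all $t\in[0,\tfc]$. By Fact~\ref{rem:fixed_horizon_wellposedness}, the fixed-horizon solution at $\tfc$ is unique, so $\tfc$ is a well-defined candidate.

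The main obstacle is the positivity of $\hamil^\infty$ in the case $\varpi=0$: the whole argument rests on the strict inequality in Lemma~\ref{lem_empty_gateway}, and the trace computation above is only what feeds it the empty-gateway hypothesis. A secondary point is the hypothesis list. The corollary quotes only Assumption~\ref{assump:point_limit}, but the supporting lemmas also use Assumption~\ref{assump:standing}, which holds as a standing assumption throughout the paper.
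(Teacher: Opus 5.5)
Your proposal is correct and follows the paper's own argument. The paper derives the corollary from Theorem~\ref{thm:cov-steering} together with the strict positivity of $\hamil^\infty$ (Lemmas~\ref{lem_empty_gateway} and~\ref{inf_hor_cs}), the small-time limit $\hamil_{\tf}\to-\infty$, and continuity from Lemma~\ref{lem:reg:init_cost}, concluding with the intermediate value theorem exactly as you do (the same reasoning appears in the proof of Theorem~\ref{thm:p1d-main}); your trace argument for $\mathcal G_\infty=\varnothing$ and your constancy-of-the-Hamiltonian check for \eqref{total_hamiltonian} only spell out steps the paper leaves implicit.
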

\end{myOCP}

We note that the strict positivity of $\hamil^\infty(\cdot)$, together with  $\lim_{\tf \to 0^+} \hamil_{\tf}(\cdot)=-\infty$ and the continuity of the Hamiltonian with respect to the final time
from Lemma~\ref{lem:reg:init_cost},  yields a finite Hamiltonian-zero via the intermediate value theorem.

%% file: sections/4.1-Numerics_snsvty.tex
\section{Final time Sensitivities and Numerical Algorithm} \label{Section_7}

This section provides a computational bridge from the theory to an implementable numerical solver. 
Thus far, we have characterized optimal final-time candidates as zeros of the fixed-final-time Hamiltonian and identified the asymptotic mechanism by which such zeros may escape to infinity. 
We now turn this characterization into a {numerical procedure}. 
To this end, we first derive the sensitivities of the Hamiltonian map $\tf \mapsto \hamil_{\tf} (\cdot)$ with respect to the final time. We then employ these sensitivities in a \emph{trust-region line-search} method that searches for finite Hamiltonian-zeros while monitoring proximity to the Asymptotic Gateway in order to detect asymptotic Hamiltonian-zero behavior.

\subsection{Final time Sensitivity of the Hamiltonian} \label{subsect7_sens}

In general, an analytical expression for the co-state sensitivity is not available.  However, for a special class of diffusion matrices, both the co-state sensitivity and the Hamiltonian sensitivity {can be characterized analytically}.

To this end, fix a final time $\tf$ and let \(\eps>0\). 
Consider the perturbation ${\tfe}\Let \tf+\eps \delta \tf$. Let $\chi_{\tf}(\cdot)\Let \bigl(\Sigma_{\tf}(\cdot),\Pi_{\tf}(\cdot)\bigr)$ and $\chi_{\tfe}(\cdot)\Let\bigl(\Sigma_{\tfe}(\cdot),\Pi_{\tfe}(\cdot)\bigr)$ denote the corresponding optimal covariance/co-state pairs for unperturbed and perturbed trajectories, respectively. 
By construction, for every $\eps>0$, we have $\Sigma_{\tfe}(0)=\Sigma_{\tf}(0)=\initcov $ and $\Sigma_{\tf^\eps}(\tf^\eps)=\Sigma_{\tf}(\tf)=\fincov $.

Define the endpoint variations induced by the perturbation $\delta \tf$ as
\begin{align}
\delta  \hamil_{\tf} &\Let \lim_{\eps\to 0} \frac{\hamil_{\tfe}\bigl(\chi_{\tfe}(\tfe)\bigr) - \hamil_{\tf}\bigl(\chi_{\tf}(\tf)\bigr)}{\eps}, \label{cov_sens_pert_1} \\
\delta \Pi_{\tf} &\Let \lim_{\eps\to 0} \frac{ \Pi_{\tfe}(\tfe)-\Pi_{\tf}(\tf)
}{\eps}, \label{cov_sens_pert_2} \\
\delta \Pi_{\tf;t} &\Let \lim_{\eps \to 0} \frac{\Pi_{\tfe}(t)-\Pi_{\tf}(t)}{\eps} \label{cov_sens_pert_3} \quad \text{for all } t\in[0,\tf].
\end{align}
Then, following Equations \eqref{final_ham_dyn}, \eqref{final_ricc_dyn} and \eqref{final_cov_dyn}, the endpoint sensitivity of $\hamil_{\tf}(\cdot)$ with respect to $\tf$, denoted by $\tfrac{\delta \hamil_{\tf}}{\delta \tf}(\cdot)$, is related to the covariance dynamics and the initial value (respectively, endpoint) sensitivity of the co-state $\tfrac{\delta {\Pi}_{\tf;0}}{\delta \tf}$ (respectively, $\frac{\delta {\Pi}_{\tf}}{\delta \tf}$) through 
\begin{align}\label{tilde_H_sens}
    \hspace{-0.5 em}\frac{\delta {\hamil}_{\tf}}{\delta \tf}(\cdot) = \half \trace \left(\frac{\delta {\Pi}_{\tf;0}}{\delta \tf} \dot{\Sigma}(0) \right)=\half \trace \left(\frac{\delta {\Pi}_{\tf}}{\delta \tf} \dot{\Sigma}(\tf) \right)\!.
\end{align}
In general, there is no analytical expression to compute the co-state sensitivities. 
However,
for the special case $D=\kappa B B\t$ for some scalar $\kappa \geq 0$, the initial co-state $\Pi_0$ can be computed analytically from \cite{cite3_3} for prescribed boundary conditions $\initcov ,\fincov \in \mathbb{S}_{++}^n$. Namely, from \eqref{ham_matr} and \eqref{STM_matr}, the analytical expression for $\Pi_0$ is
\begin{align}
    &\hspace{-.7em}\Pi_0 = -\Phi_{12}(\tf,0)^{-1} \Phi_{11}(\tf,0) +\initcov ^{-\half} \bigl(\tfrac{\kappa}{2} I_n-\Xi\bigr)\initcov ^{-\half}, \label{p3_cont_analyt} \\
    &\hspace{-.7em}\Xi = \hspace{-1mm} \left[\frac{\kappa^2}{4} I_n +\initcov ^{1/2}\Phi_{12}(\tf,0)^{-1}\fincov \Phi_{12}(\tf,0)\t\initcov ^{1/2} \right]^{1/2}.
\end{align}
Then,  for $t\in[0,\tf]$, $\Pi(t), \Sigma(t)$ can also be computed analytically; see \cite{cite3_3}.
In this case, the co-state sensitivity $\frac{\delta {\Pi}_{\tf;0}}{\delta \tf}$ can be obtained by differentiating \eqref{p3_cont_analyt} with respect to $\tf$, yielding
\begin{align} \label{tilde_H_sens_0}
    &\frac{\d {\Pi}_0}{\d \tf} 
    = \Phi_{12}(\tf,0)^{-1}\frac{\d \Phi_{12}}{\d \tf}(\tf,0)  \Phi_{12}(\tf,0)^{-1} \Phi_{11}(\tf,0) \nonumber \\
    & - \Phi_{12}(\tf,0)^{-1}\frac{\d \Phi_{11}}{\d \tf}(\tf,0) - \initcov ^{-\half} \Y \initcov ^{-\half} , \\
    &= -\Phi_{12}(\tf,0)^{-1} B R^{-1} B\t \bigl(\Phi_{12}{(\tf,0)^{-1}}\bigr)\t-\initcov ^{-\half} \Y \initcov ^{-\half},\nn
\end{align}
where $\Y$ is a solution to the Lyapunov equation
\begin{align} \label{sylv}
    \Xi \Y \hspace{-0.3mm} + \hspace{-0.3mm} \Y \Xi = -\mathcal{S} \hspace{-0.3mm} \left(\initcov ^{1/2}\Phi_{12}(\tf,0)^{-1}\frac{\d \Phi_{12}}{\d \tf}(\tf,0)  \Phi_{12}(\tf,0)^{-1} 
    \fincov  \bigl({\Phi_{12}(\tf,0)^{-1}}\bigr)\t\initcov ^{1/2} \right).
\end{align}
Equation \eqref{sylv} can be solved using the well-known Kronecker formula; see \cite[Appendix A]{anderson}. 
Equation~\eqref{tilde_H_sens} suggests an important property of Asymptotic Connections, i.e., their relation to critical points of the value function with respect to the final time, at which both the first and second derivatives vanish, as stated next.

\begin{remark}\label{prop_ac_infty}
    Under Assumption \ref{assump:point_limit}, it follows from Theorem \ref{prop_exist_AC} that, along an Asymptotic Connection with $\tf \to +\infty$, the associated covariance trajectories approach $\mathcal{G}_\infty$ with $\lim_{t \to +\infty} B\t \Pi (t)\to 0$, $\lim_{t \to \infty}\dot{\Sigma}(t)=0$ and $\lim_{\tf \to +\infty} \rm H_{\tf}(\cdot)=0$. 
    In that case,~\eqref{tilde_H_sens} implies that $\lim_{\tf\to+\infty}\frac{\delta \rm H_{\tf}}{\delta \tf}(\cdot)=0$.
    Therefore, Asymptotic Connections are associated with an asymptotic Hamiltonian-zero regime in which, together with Corollary~\ref{cor_dv_H_0}, both the first and second derivatives of the value function with respect to the final time vanish as $\tf\to+\infty$.
\end{remark}

\subsection{Trust-region Based Line Search Algorithm} \label{subsect7:alg}

Building on our analysis in Section \ref{Section_5}, we compute the candidate optimal final times by minimizing the value function associated with the cost functional \eqref{eq:cost} with respect to the final time $\tf$. The resulting optimization problem is addressed using a trust-region line-search algorithm, following the framework in~\cite{nocedal2006}.

\begin{algorithm}
\caption{\texttt{Trust-Region-Based Final Time Optimization}}
\label{alg_1}
\begin{algorithmic}[1]
\Statex \hspace{-\algorithmicindent} \textbf{Input:} Initial final time $\tf^{(0)}$, bounds $0<t_{\rm f,min}<t_{\rm f,max}$, acceptance threshold $\eta\in[0,0.25)$, initial trust-region radius $\Delta^{(0)} > 0$, trust-region bounds $\Delta_{\rm min},\Delta_{\rm max} > 0$, step-size control constants $\gamma_{\rm inc} > 1$, $\gamma_{\rm dec} \in (0,1)$, and convergence tolerance $\eps > 0$
\Statex \hspace{-\algorithmicindent} \textbf{Output:} Candidate optimal final time $\tfc$ and output flag $\dagger$
\Statex \hspace{-\algorithmicindent} \textbf{Initialize:}
\State $k\gets 0$, set an initial bracket $[t_{\rm low},t_{\rm high}]\subseteq[t_{\rm f,min},t_{\rm f,max}]$ if available
\State $\big(\hamil^{(0)}, \delta \hamil^{(0)}_{\tf^{(0)}}/\delta \tf, \cdot \big) \gets \texttt{CS}(\tf^{(0)})$
\While{1}
    \If{\texttt{isempty}$\big(\delta \hamil^{(k)}_{\tf^{(k)}}/\delta \tf\big)$}
         \If{$k = 0$}
            \State $\delta \hamil^{(k)}_{\tf^{(k)}}/\delta \tf \gets 1$
         \Else
             \State $\delta \hamil^{(k)}_{\tf^{(k)}}/\delta \tf \gets \frac{\big(\hamil^{(k)} - \hamil^{(k-1)}\big)}{\big(\tf^{(k)} - \tf^{(k-1)}\big)}$
         \EndIf
    \EndIf
    \State $\mathcal{B}^{(k)} \gets \delta \hamil^{(k)}_{\tf^{(k)}}/\delta \tf$

    \If{$|\hamil^{(k)}| < \eps \ \land\ \mathcal{B}^{(k)} > 0$}
        \State \Return $\tf^{(k)}, \ \dagger \gets 1$
    \EndIf

    \State Update $[t_{\rm low},t_{\rm high}]$ using the sign of $\hamil^{(k)}$

    \If{$\mathcal B^{(k)}>0$ and $|\mathcal B^{(k)}|$ is sufficiently large}
        \State $\delta^{(k)}\gets
        \mathrm{clip}\big(-\hamil^{(k)}/\mathcal B^{(k)},-\Delta^{(k)},\Delta^{(k)}\big)$
    \Else
        \State $\delta^{(k)}\gets -\Delta^{(k)}\operatorname{sgn}\big(\hamil^{(k)}\big)$
    \EndIf
    
        \State $\widehat{\tf}\gets
    \mathrm{clip}\big(\tf^{(k)}+\delta^{(k)},t_{\rm f,min},t_{\rm f,max}\big)$
    \State If a bracket is available, clip $\widehat{\tf}$ to $[t_{\rm low},t_{\rm high}]$
    \If{$|\widehat{\tf}-\tf^{(k)}|<\epsilon$}
        \State $\widehat{\tf}\gets (t_{\rm low}+t_{\rm high})/2$
    \EndIf

    \State $\big(\widehat{\hamil},\delta\widehat{\hamil}_{\widehat{\tf}}/\delta\tf,\widehat\Sigma\big)
    \gets \texttt{CS}(\widehat{\tf})$
    \State $\Upsilon\gets \texttt{Detector}(\widehat\Sigma,\widehat{\hamil})$

    \If{$\Upsilon=1$}
        \State Mark $\widehat{\tf}$ as an asymptotic side of the search interval
        \State Set $\tf^{(k+1)}\gets (t_{\rm low}+t_{\rm high})/2$
        \State $\big(\hamil^{(k+1)},\delta\hamil^{(k+1)}_{\tf^{(k+1)}}/\delta\tf,\Sigma^{(k+1)}\big)
        \gets \texttt{CS}(\tf^{(k+1)})$
    \Else
        \State $s^{(k)}\gets \widehat{\tf}-\tf^{(k)}$
        \State $\Delta m^{(k)}\gets
        -\hamil^{(k)}s^{(k)}
        -\frac{1}{2}\mathcal B^{(k)}\big(s^{(k)}\big)^2$
        \State $\rho^{(k)}\gets
        |\widehat{\hamil}-\hamil^{(k)}|/|\Delta m^{(k)}|$
        \State Update $\Delta^{(k+1)}$ from $\rho^{(k)}$ via \cite{nocedal2006}
        \If{$\rho^{(k)}<\eta$}
            \State $\tf^{(k+1)}\gets \tf^{(k)}$
        \Else
            \State $\tf^{(k+1)}\gets \widehat{\tf}$
        \EndIf
        \State $\big(\hamil^{(k+1)},\delta\hamil^{(k+1)}_{\tf^{(k+1)}}/\delta\tf,\Sigma^{(k+1)}\big)
        \gets \texttt{CS}(\tf^{(k+1)})$
    \EndIf
    \State $k\gets k+1$
\EndWhile
\end{algorithmic}
\end{algorithm}

\begin{algorithm} 
\caption{\texttt{Detector}}
\label{alg_2}
\begin{algorithmic}[1]
\Statex \hspace{-\algorithmicindent} \textbf{Input:} Covariance trajectory  $ {\Sigma}_{\rm new}$, Hamiltonian $\hamil$, Threshold $\eta$
\Statex \hspace{-\algorithmicindent} \textbf{Output:} Flag parameter $\Upsilon$
\Statex \hspace{-\algorithmicindent} \textbf{Initialize:} $\Sigma(\cdot) \gets  {\Sigma}_{\rm new}$
\If{$\texttt{any}\big(\Sigma(\cdot) \in \mathcal{G}_\infty\big) \ \land \ (|\hamil|<\eta)$}
    \State$\Upsilon \gets 1$
\Else
    \State$\Upsilon \gets 0$
\EndIf

\end{algorithmic}
\end{algorithm}

Algorithm~\ref{alg_1} locates the finite zeros of the mapping $\tf \mapsto \hamil_{\tf}(\cdot)$ by iteratively updating the final time. Broadly, it consists of the following steps: 
\begin{enumerate}[label={\textup{\alph*)}}, leftmargin=*, widest=b, align=left]
\item At each iteration, the covariance steering problem is solved via the module \texttt{CS} in order to evaluate the Hamiltonian $\hamil^{(k)}$ and its final time sensitivity, given by \eqref{tilde_H_sens}, analytically when $D=\kappa BB\t$ for some $\kappa\ge 0$, and numerically otherwise. 
\item The Hessian at iteration $k$, denoted as $\mathcal{B}^{(k)}$, approximates the final time sensitivity of the Hamiltonian, $\frac{\d \hamil_{\tf}}{\d \tf}$, and therefore has the role of a Hessian approximation for the associated value function. 
It is computed either by using secant-based Hessian estimation or, when available, by analytically computing the Hamiltonian sensitivities (see Section~\ref{subsect7_sens}).
\item At each update, the \texttt{Detector} module in Algorithm~\ref{alg_2} monitors whether the computed covariance trajectory approaches the asymptotic gateway $\mathcal{G}_\infty$ and whether the Hamiltonian satisfies $|\hamil^{(k+1)}|<\eta$ for some user-defined threshold $\eta>0$. Its role is to prevent the solver from slowing down or getting stuck at large horizons while attempting to converge to an asymptotic Hamiltonian-zero identified in Section~\ref{Section_5}.
\item Detection of an asymptotic Hamiltonian-zero
is performed using the \texttt{any} function in Line~1 of the Algorithm~\ref{alg_2},
which returns $1$ whenever at least one sampled point along the trajectory is sufficiently close to the gateway set $\mathcal{G}_\infty$.
\end{enumerate}
In practice, proximity to $\mathcal{G}_\infty$ can be assessed at iteration $(k+1)$ through a tolerance-based test using the conditions
\begin{align*}
\|A\Sigma^{(k+1)}(s)+\Sigma^{(k+1)}(s) A\t +D\| \le \eta_{\rm G} \,\text{ and }\,
|\trace(Q\Sigma^{(k+1)}(s))|\le \eta_{\rm G},
\end{align*}
for some user-defined tolerance $\eta_{\rm G}>0$, evaluated at sampled points $s\in[0,\tf^{(k+1)}]$ along the computed trajectory $\Sigma^{(k+1)}(\cdot)$. 
If a detection occurs, $\tf$ is reduced within the trust region until either convergence to a finite final time is achieved or the lower bound $t_{\rm f,min}$ is reached, in which case an asymptotic Hamiltonian-zero is declared. 

The algorithm terminates (Line~13)  when the Hamiltonian at iteration $k$ satisfies $|\hamil^{(k)}|<\eps$ for some threshold $\eps>0$, and $\mathcal{B}^{(k)}$ is positive. 

%% file: sections/4.2-Numerics_algo.tex
\section{Numerical Examples} \label{Section_8}

In this section, we present three different numerical examples to illustrate our findings and demonstrate Algorithm~\ref{alg_1}. 
First, we illustrate the effect of the noise matrix $\Gamma$ in \eqref{StochasticSystem} on the optimal final time $\tf^*$ of \Cref{prob:Cov:Steer}. 
Then, Algorithm~\ref{alg_1} is used to solve \Cref{prob:Cov:Steer} to design a spacecraft maneuver under the linear Clohessy-Wiltshire-Hill (CWH) dynamics. Lastly, we introduce the notion of the \textit{price of synchronization} for a simple Gaussian mixture model (GMM) steering problem.

\begin{figure*}[t]
    \centering
    \includegraphics[scale=0.45]{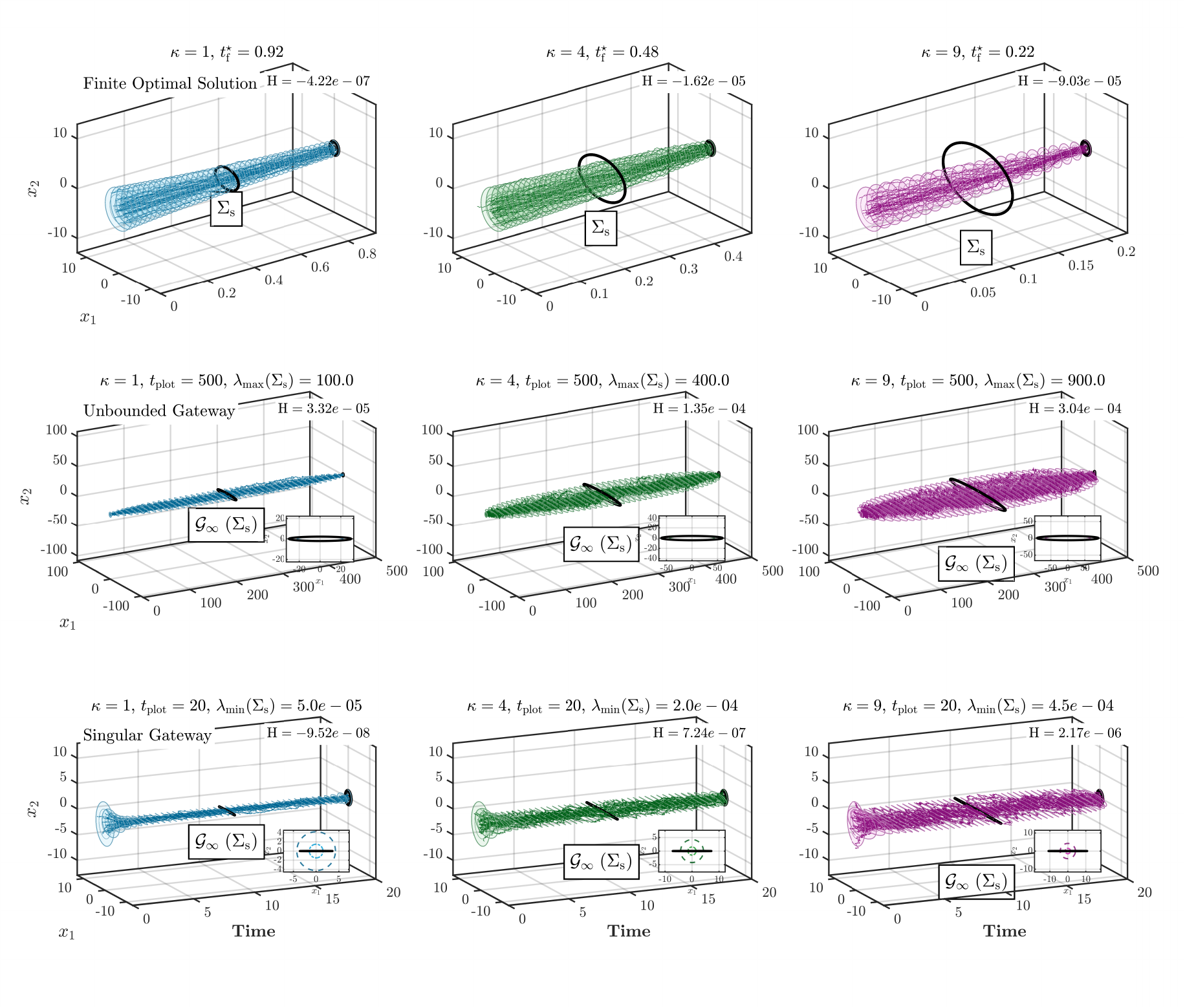}
    \caption{Numerical example illustrating the cases of finite optimal final time, unbounded gateways, and singular gateway in free-final time covariance steering problems.   
    Each panel shows the covariance trajectory over time together with sample paths of the associated closed-loop diffusion. 
    The first row ($Q=I$) exhibits finite optimal final time satisfying the transversality condition $\hamil_{\tf^*}(\cdot) = 0$. 
    The second row ($Q=0$ and $\alpha=0.005$) results in a large Lyapunov covariance $\Sigma_{\rm s}$, with $\lambda_{\max}(\Sigma_{\rm s})=100$, $400$, and $900$ as $\kappa$ increases. 
    The third row ($Q=0$ and $\delta_B=10^{-4}$) results in a nearly singular Lyapunov covariance with small $\lambda_{\min}(\Sigma_{\rm s})$.}
    \label{fig1}
\end{figure*}

\subsection{Effect of Noise Intensity on the Optimal Final Time and Asymptotic Gateways}

We start with a simple 2D example to illustrate the covariance steering problem with three distinct behaviors of the free-final time problem: convergence to a finite optimal final time, convergence toward an unbounded asymptotic gateway, and convergence toward a singular asymptotic gateway. We consider the SDE \eqref{StochasticSystem}  with the parameters
\begin{align*}
A \Let     \begin{bmatrix}
        -\alpha & 0\\
        0 & -1
    \end{bmatrix},\quad  
B \Let     \begin{bmatrix}
        1 & 0\\
        0 & \sqrt{\delta_B}
    \end{bmatrix}, \quad R=I,
\end{align*}
and boundary covariances $\initcov \Let 2I$ and $\fincov \Let I/4$. 
We set the diffusion matrix to $D \Let  \kappa B B\t$ and vary the noise intensity $\kappa \in \{1,4,9\}$ with $\varpi=0$ to examine its effect on the optimal common final time.
We consider both $Q=I$ and $Q=0$ to illustrate the asymptotic connection behavior. 
For each choice of $(\alpha,\delta_B,\kappa)$, we also compute the stationary covariance $\Sigma_{\rm s}$ associated with the uncontrolled diffusion, namely the solution of the Lyapunov equation
\begin{align} \label{eq:sec_num_lyap}
    A\Sigma_{\rm s}+\Sigma_{\rm s}A \t+\kappa BB\t=0.
\end{align}

Figure~\ref{fig1} illustrates the covariance evolution for different noise levels $\kappa$ along with several trajectory realizations. 
The colored ellipses show the $3\sigma$ covariance level sets along the trajectories, whereas the thick black covariance ellipse corresponds to the stationary Lyapunov covariance $\Sigma_{\rm s}$ \eqref{eq:sec_num_lyap}. 
In the second and third rows, this ellipse represents the asymptotic gateway $\Sigma_{\rm s} \in \mathcal G_\infty$ approached by the finite-horizon covariance trajectories.

Higher noise intensity leads to an earlier optimal final time in the finite optimal final time case, since stronger diffusion accelerates covariance growth and therefore requires the controller to act sooner to minimize the accumulated cost.
This behavior is shown in the first row, where $Q=I$, $\alpha=0.35$, and $\delta_B=1$. The computed optimal final times decrease from $t_{\rm f}^{\star}=0.92$ to $0.48$ and $0.22$ as $\kappa$ increases from $1$ to $4$ and $9$, respectively.
The displayed Hamiltonian values confirm that the transversality condition $\hamil_{\tf}(\cdot)=0$ is satisfied up to numerical tolerance.

The second row in Figure~\ref{fig1} shows an unbounded gateway regime. In this case, we consider $Q=0$, $\alpha=0.005$, and $\delta_B=1$.
Since the slow mode approaches marginal stability as $\alpha$ becomes small, the solution to the Lyapunov equation \eqref{eq:sec_num_lyap} becomes large.
Hence, the covariance trajectory approaches a limiting covariance whose size grows with the noise intensity and becomes unbounded as $\alpha\to 0^+$.

The third row shows a singular gateway regime. Here, we take $Q=0$, $\alpha=0.35$, and $\delta_B=10^{-4}$.
Although the pair $(A,B)$ remains controllable, the diffusion in the second coordinate is nearly singular. Consequently, the covariance solution for the Lyapunov equation \eqref{eq:sec_num_lyap} has a very small eigenvalue.
The resulting covariance tube approaches a thin limiting covariance, again marked by the thick black gateway ellipse.

\subsection{Covariance Steering of a Spacecraft}

In this example, we consider a spacecraft maneuver in which the covariance of a deputy spacecraft is steered toward a target covariance. 
The system dynamics are modeled using the CWH equations \cite[Section~7.4]{curtis2020orbital}, assuming a circular orbit at 800 km altitude, which corresponds to the LTI system \eqref{StochasticSystem} with parameters
\begin{align*}
    A= \hspace{-0.5mm} \begin{bmatrix}
0_{3} & I_{3}\\
\omega^{2}\,\begin{bmatrix}3&0&0\\[-2pt]0&0&0\\[-2pt]0&0&-1\end{bmatrix} & 2\omega\!\begin{bmatrix}0&1&0\\[-2pt]-1&0&0\\[-2pt]0&0&0\end{bmatrix}
\end{bmatrix},
\quad
B=m_{\rm d}^{-1}\begin{bmatrix}0_{3}\\ I_{3}\end{bmatrix} \hspace{-0.9mm},
\end{align*}
where $m_{\rm d} = 300$ kg is the mass of the spacecraft, $\omega = \sqrt{\mu_{\rm E}/r}$ is the orbital frequency with gravitational parameter of Earth $\mu_{\rm E}$ and the radius of the orbit of the spacecraft $r$.
We take $\kappa=225$ with $R=10^3 \ {\rm I}_3$, $Q=\blkdiag \{ 10 \, {\rm I}_3,{\rm I}_3 \}$, $\Sigma_0=\blkdiag \{ 10 \, {\rm I}_3,{\rm I}_3 \}$, $\Sigma_{\rm f}=\Sigma_0/10$.

The covariance evolution obtained from the optimal steering controller computed jointly with the optimal final time is shown in Figure~\ref{fig:spacecraft_state}. 
Both position and velocity covariance ellipsoids are visualized along an artificial time-flow direction to avoid overlap. 
It can be seen that the controller first reduces and reshapes the velocity uncertainty and then the position uncertainty to meet the prescribed terminal covariance constraints at the optimal final time. 

The convergence behavior of Algorithm~\ref{alg_1} is shown in Figure~\ref{fig:opt_logs} for two different initial guesses of the final time.
In the low-initialization case for the free-final time in
Figure~\ref{fig:opt_logs}(a), the final time increases monotonically and the Hamiltonian approaches zero from below.
The high-initialization case for the free-final time, in
Figure~\ref{fig:opt_logs}(b) is more challenging. 
The iterates initially remain near a flat region of the Hamiltonian, where the first-order sensitivity of the Hamiltonian with respect to the final time is nearly zero.
In this regime, a purely local update either stagnates or makes negligible progress. However, the proposed trust-region update Algorithm~\ref{alg_1} successfully drives the iterates away from this region.
After escaping the plateau, the Hamiltonian rapidly approaches zero and the algorithm converges to the same finite optimal final time obtained from the low-initialization case, illustrating the robustness of Algorithm~\ref{alg_1} with respect to the initial final time guess.

\begin{figure*}[t]
\centering
\begin{minipage}[t]{\linewidth}
    \centering
    \includegraphics[scale=0.25]{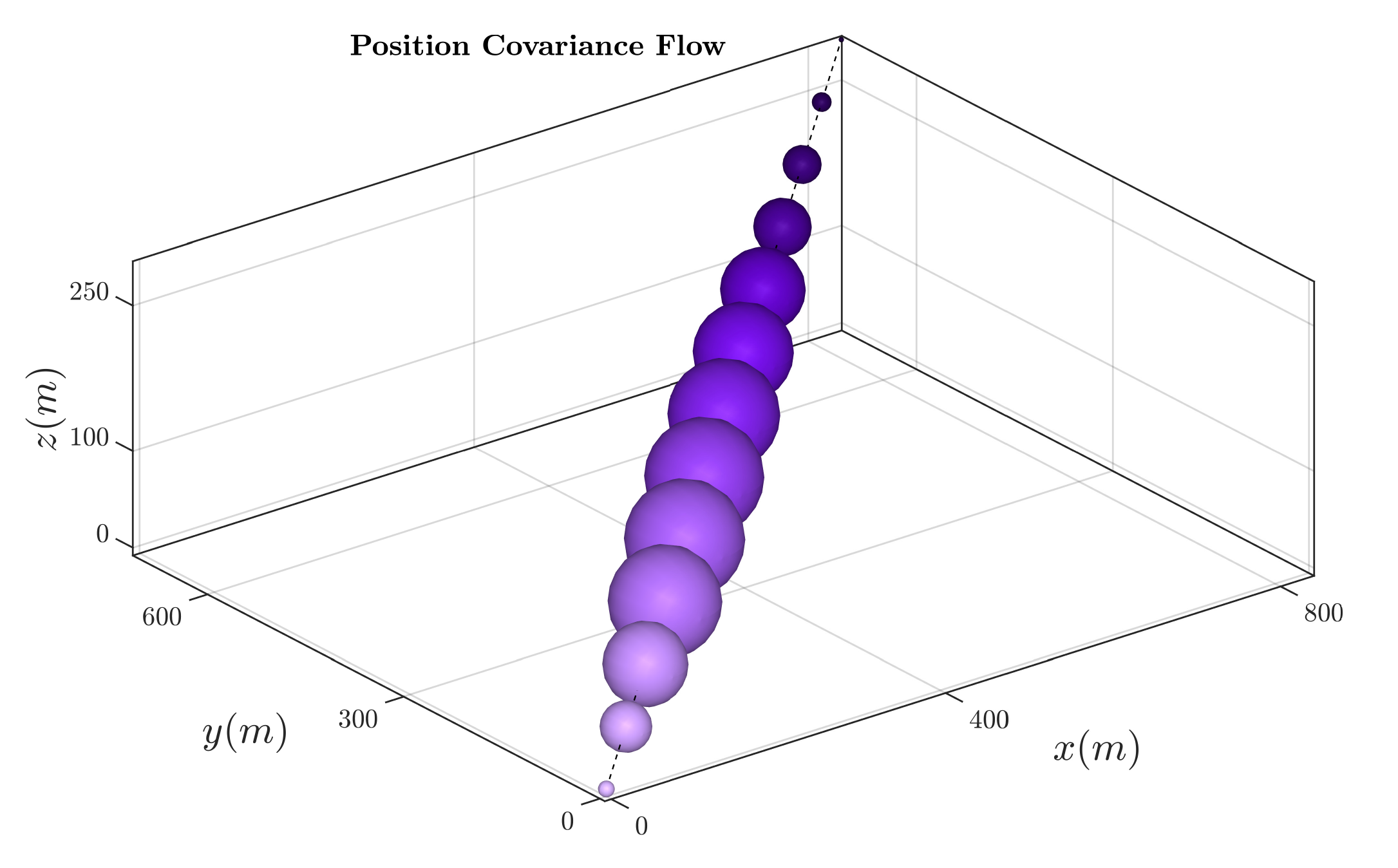}
\end{minipage}
\begin{minipage}[t]{\linewidth}
    \centering
    \includegraphics[scale=0.25]{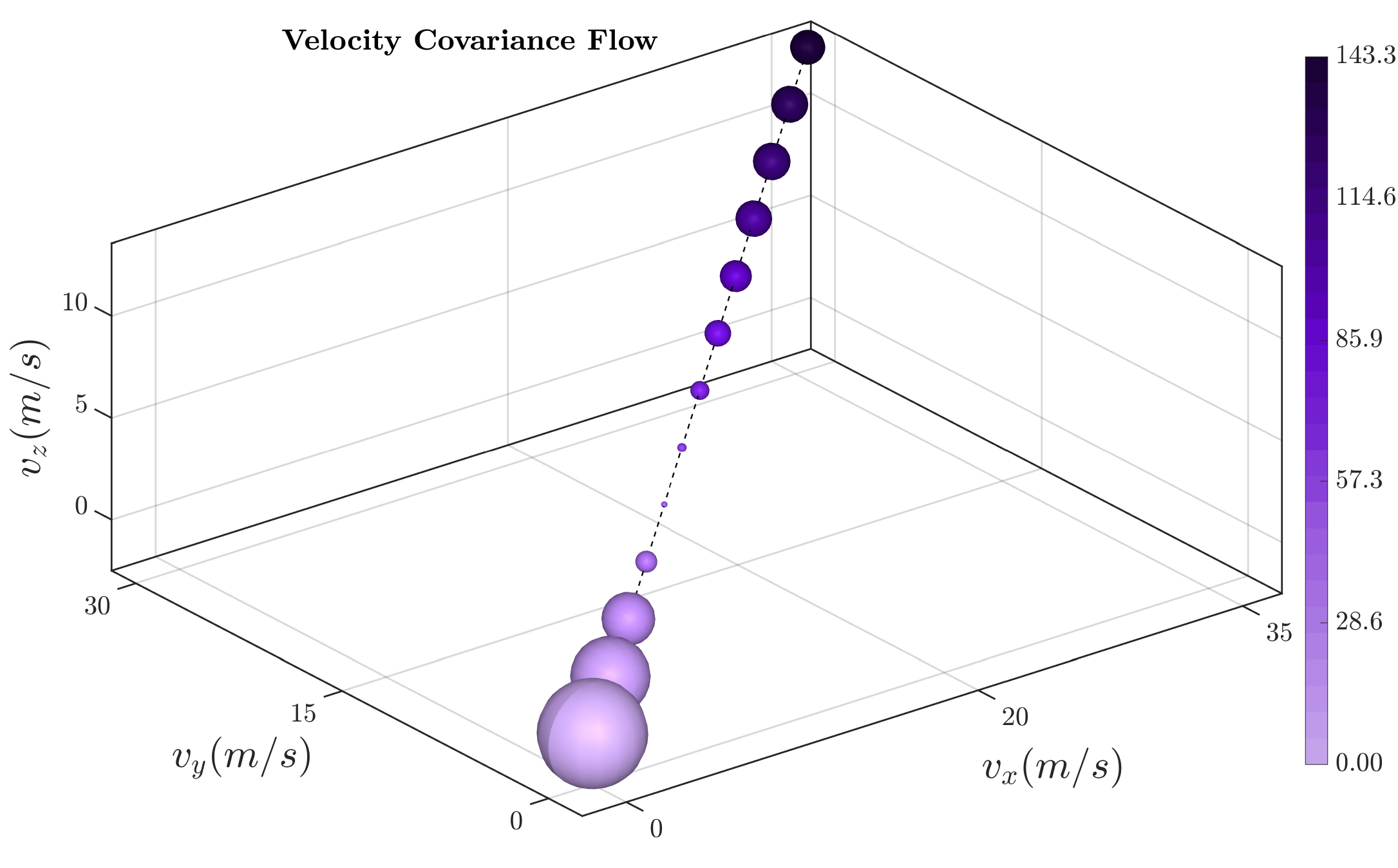}
\end{minipage}
\caption{The position and velocity distributions are shown in terms of their respective means and covariances within $3\sigma$ of the uncertainty.
The left panel shows the position covariance flow, while the right panel shows the velocity covariance flow, with trajectories evolving from lighter to darker shades over time.}
\label{fig:spacecraft_state}
\end{figure*}

\begin{figure}[t]
\begin{center}
        \centering
        \includegraphics[scale=0.6]{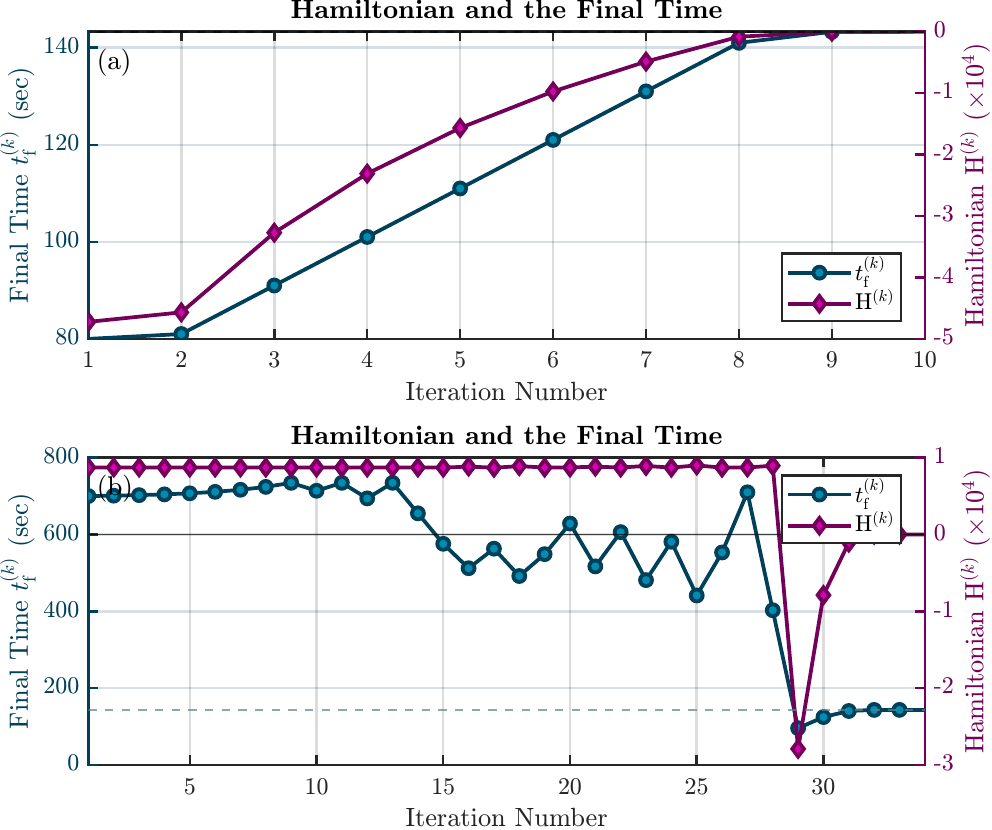}
        \caption{Convergence of the optimization algorithm for the covariance steering problem initialized with both a low and a high initial guess.}
        \label{fig:good_init}
    \label{fig:opt_logs}
\end{center}
\end{figure}

\subsection{The Price of Synchronization}

Lastly, we aim to steer a Gaussian mixture $\rho_0 \Let \sum_{i=1}^{r} w^{0}_i \mathcal{N}(\mu_0^i,\Sigma_0^i)$ to a target Gaussian mixture $\rho_{\rm f} \Let \sum_{j=1}^{s} w^{\rm f}_j \mathcal{N}(\mu_{\rm f}^i,\Sigma_{\rm f}^j)$. 
Each component pair $(i,j)$ is driven by its own affine controller, and assignment is randomized through an optimal transport coupling $\pi$ (see, e.g., \cite{villani_ot}), namely, a joint distribution whose marginals are the mixture weights $w^0$ and $w^{\rm f}$, where
\begin{align*}
    \sum_j \pi_{ij} = w^0_i, \quad \sum_i \pi_{ij} = w^{\rm f}_j, \quad \pi_{ij}\ge 0.
\end{align*}
We introduce $V_{ij}(\tf)$ to denote the value function of \Cref{prob:Cov:Steer} for the pair $(i,j)$, that is, the optimal steering cost between $\Sigma_0^i$ and $\Sigma_{\rm f}^j$ over horizon $\tf$, including the time penalty ${\varpi}\tf/2$. 
Solving this \emph{synchronized} problem reduces to evaluating each $V_{ij}(\tf)$ with the \texttt{CS} module and minimizing their total cost over the horizon,
\begin{align*}
C_{\rm sync} \Let \min_{\tf>0} \min_{\pi} \sum_{i,j}\pi_{ij} V_{ij}(\tf),
\end{align*}
which is essentially a one-dimensional line-search problem over $\tf$ (i.e., amounting to the transversality condition $\sum_{ij}\pi_{ij} {\hamil}_{ij}(\tf)=0$).
In contrast, our free-final time formulation lets each pair have its own optimal horizon ${\tf}^{ij}$ via its individual transversality condition ${\hamil}_{ij}=0$ using \Cref{alg_1}, yielding
\begin{align*}
C_{\rm ff} &\Let \min_{\pi}\ \sum_{i,j}\pi_{ij} \min_{\tf>0}V_{ij}(\tf) = \min_{\pi}\ \sum_{i,j}\pi_{ij}\,V_{ij}({\tf}^{ij}).
\end{align*}
The common final time strategy can be restrictive for general distribution steering problems, as it is not necessarily the minimizing solution for the decoupled problem, that is, $C_{\rm sync}\ge C_{\rm ff}$. 
We call this gap $\Delta \Let C_{\rm sync}-C_{\rm ff}\ge 0$ the \emph{price of synchronization}.

Figure~\ref{fig:gmm_price} depicts this gap by enforcing a common arrival time, showing an increased control effort cost of $14\%$, while the gap shrinks monotonically as the components merge, where the responsibility\footnote{For a sampled random variable $x$, 
the posterior probability that it belongs to component $i$ of a GMM
is known as the
\emph{responsibility} \cite[Equation 2.192, Section 9.2]{bishop2006prml}.}
entropy $\bar{\gamma}$ defined as
\begin{align*}
    \gamma_i(x) &\Let w^0_i\mathcal{N}(x;\mu^i_0,\Sigma^i_0) / \sum_k w^0_k\mathcal{N}(x;\mu^k_0,\Sigma^k_0), \\
    \bar{\gamma} &\Let \mathbb{E}_{x\sim\rho_0} \Big[-\sum_i \gamma_i(x)\log\gamma_i(x)\Big],
\end{align*}
grows, and randomized assignment becomes necessary. 
Figure \ref{fig:gmm_density} contrasts the two policies: the synchronized policy steers every component together at $\tf$, whereas the free-final time policy produces distinct arrivals ${\tf}^{ij}$, each held upon arrival.

\begin{figure*}[t]
\centering
\includegraphics[scale=0.37]{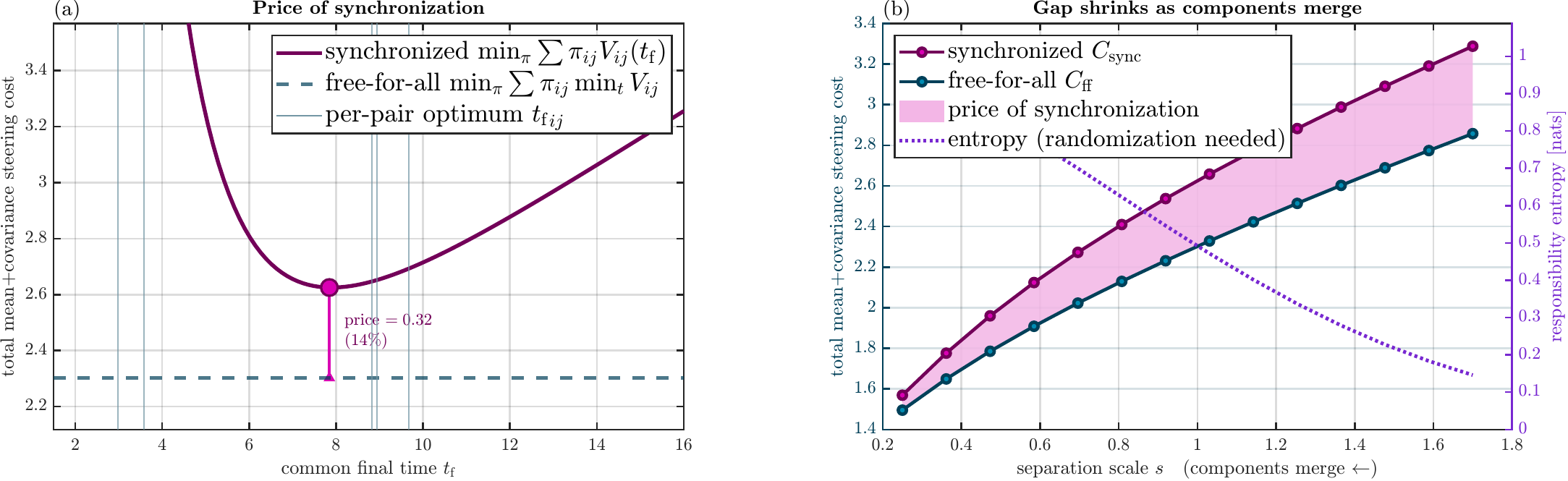}
\caption{Price of synchronization. (a) Total cost versus the common final time $\tf$ (synchronized) against the free-final time cost $C_{\rm ff}$. (b) The gap $\Delta$ and the responsibility entropy as the mixture components merge.}
\label{fig:gmm_price}
\end{figure*}

\begin{figure*}[t]
\centering
\includegraphics[scale=0.37]{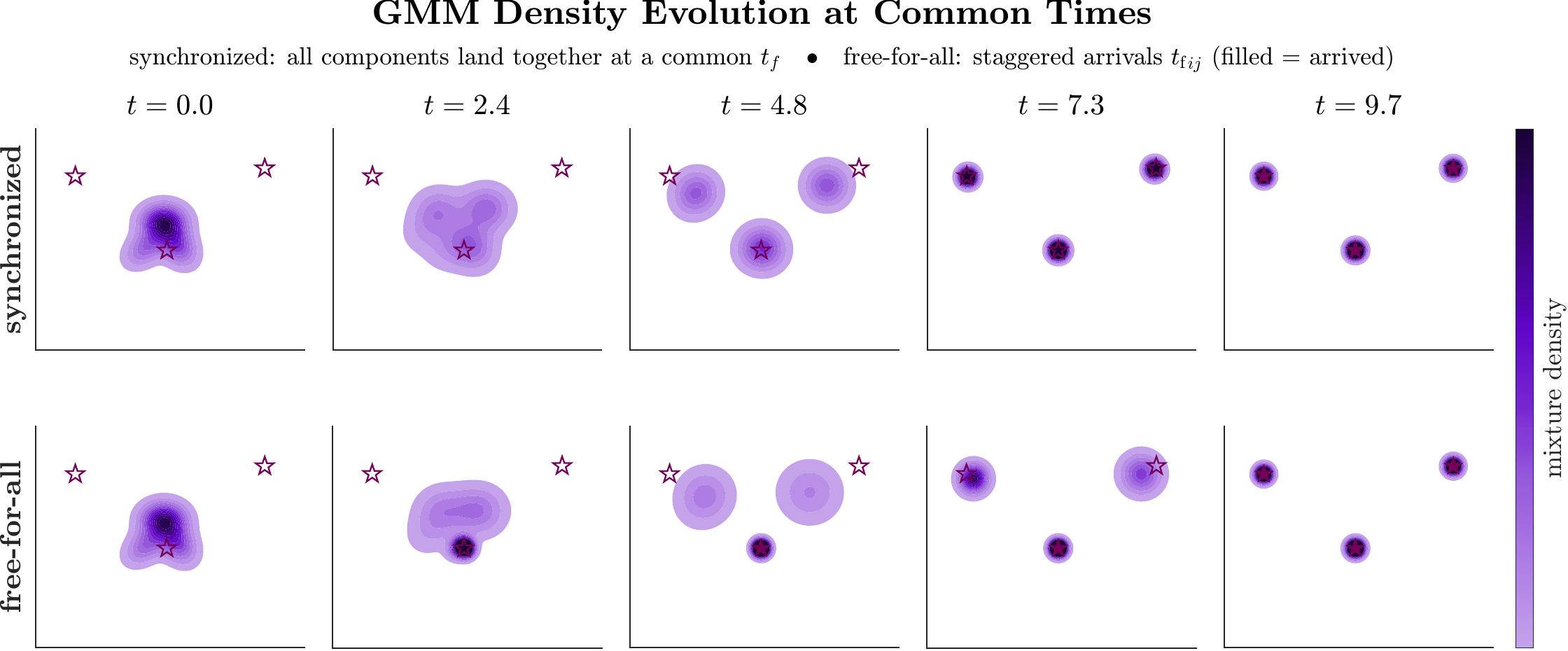}
\caption{Mixture-density evolution at common times. Top: the synchronized policy steers all components to their slots together at $\tf$. Bottom: the free-final time policy yields distinct arrivals ${\tf}^{ij}$.}
\label{fig:gmm_density}
\end{figure*}

%% file: sections/Conclusion.tex
\section{Conclusion} \label{sec:conc}

This article introduces the first systematic study of the continuous-time covariance steering problem with a common free-final time, in which the final time is free and optimized jointly with the covariance steering policy rather than imposed a priori.
We have derived the associated transversality conditions and characterized when Hamiltonian zeros are finite or asymptotic by introducing the notion of Asymptotic Gateways and Connections.
We have also established a trust-region line-search algorithm that detects both finite optimal times and infinite-horizon solution regimes. 
We have demonstrated the architecture's capabilities through three numerical examples.

Apart from its applicability to several practical problems, the proposed approach offers new theoretical insights into when free-final-time stochastic control problems may lead to asymptotic Hamiltonian zeros or to singular, and even unbounded, covariance behavior.
Second, our formulation provides a natural mechanism for more general free final-time distribution-steering problems: for instance, different Gaussian components of a GMM can be assigned their own, separately optimized final times rather than being stopped at a common time, thereby avoiding the cost of synchronization. 

Future work includes studying infinite-horizon continuous-time covariance steering under weaker hypotheses, for instance, by relaxing both the controllability assumption Assumption~\ref{assump:standing:1}
and Assumption~\ref{assump:point_limit} on the existence of a fixed asymptotic limit for the attained covariance.

%% file: sections/appendixA.tex
\section{}\label{sec:appen:a}

\begin{proof}[\highlight{Proof of Proposition \ref{Prop:IV1}}]
The result follows from a completion-of-squares argument \cite{cite3_1}.
Specifically, fix an admissible final time \(\tf>0\). 
Let 
$
[0,\tf] \ni t \mapsto S(t) \in \mathbb{S}^n 
$
be a continuously differentiable symmetric matrix-valued trajectory, to be chosen later. 
Consider the class of admissible controls on \([0,\tf]\), whose induced state process satisfies the prescribed covariance boundary conditions \(x(0)\sim\mathcal{N}(0,\initcov)\) and \(x(\tf)\sim\mathcal{N}(0,\fincov)\). 
For every control in this fixed-horizon feasible class, the quadratic terms  \(\mathbb E[x(\tf)\t S(\tf)x(\tf)]=\trace(S(\tf)\fincov)\) and \(\E[x(0)\t S(0)x(0)]=\trace(S(0)\initcov)\) are constant.

We define the performance index, $\mathcal{J}(\cdot) =\mathcal{J}(\tf,u(\cdot))$, with boundary terms that are constant under the prescribed covariance constraints, by
\begin{align}
    \hspace{-1 em} \mathcal{J}(\cdot) &\Let J(\cdot)+\half \E \left\{ x (\tf) \t S(\tf) x(\tf) -x(0)\t S(0) x(0) \right\} \nonumber \\
    & = J(\cdot)+\half \trace \left(S(\tf) \fincov \right)-\half\trace\left(S(0) \initcov\right)\nonumber \\
    & = J(\cdot)+\half \E \left \{ \int_0^{\tf} \, \d(x (t) \t S(t) x(t)) \right \} \label{mod-obj}.
\end{align}
Since the minimization of $\mathcal{J}(\cdot)$ in \eqref{mod-obj} is equivalent to the minimization of  $J(\cdot)$ in \eqref{eq:cost}, applying It\^o's Lemma \cite{book1} to  $\d(x (t) \t S(t) x(t))$ in \eqref{mod-obj} yields
\begin{align} \label{mod-obj-aux}
    \half \d( x(t)\t S(t) x(t)) &= \Big[ \half x(t) \t  (\dot S(t)+A\t S(t)+S(t) A)x(t) \Big. \nonumber \\
    &\Big. \underbrace{+u(t) \t B\t S(t) x(t)+\half \trace(S(t) D) \Big] \, \d t}_{\rm \text{drift term}}   +\underbrace{x(t)\t S(t) \noise \, \d w(t)}_{\rm \text{diffusion (martingale) term}}.
\end{align}
Taking the expectation in \eqref{mod-obj-aux}, and using the fact that the martingale term has a zero expectation, only the expectation of the drift term remains. 
Substituting \eqref{mod-obj-aux} into \eqref{mod-obj} yields
\begin{align} \label{mod-obj-2}
&\mathcal{J}(\cdot) =\E \Bigg \{ \int_0^{\tf} \, \Big[\half \varpi+\half u(t)\t R u(t)+\half x(t) \t  (Q+\dot S(t) \nonumber \Big. \Bigg. 
\\
    & \Bigg. \Big. \hspace{4 em}+A\t S(t)+S(t) A)x(t) + u(t) \t B\t S(t) x(t) + \half \trace(S(t) D) \Big] \d t \Bigg \}.
\end{align}
Choosing \(S(\cdot)\) to satisfy the matrix-valued Riccati equation
\begin{align} \label{ricc_S_dyn}
    \dot S(t)=-Q-A\t S(t)-S(t)A+S(t)B R^{-1} B\t S(t),
\end{align}
for all \(t \in [0,\tf]\) with fixed boundary conditions at $t=0$ and $t=\tf$ chosen such that the induced optimal feedback control steers the covariance from $\Sigma(0)=\initcov$ to $\Sigma(\tf)=\fincov$, and
substituting \eqref{ricc_S_dyn} into \eqref{mod-obj-2} yields,
\begin{align}
    \mathcal{J}(\cdot) \nonumber 
    &=\E \Bigg \{ \int_0^{\tf} \, \Big[\half \varpi+ \half u(t)\t R u(t) + u(t) \t B\t S(t) x(t) \nonumber  \\
    &\hspace{1 em} +\half \| B\t S(t) x(t) \|^2_{R^{-1}} 
    +\half \trace(S(t) D) \Big] \d t \Bigg \}, \nonumber \\
    &=\E \Bigg \{ \int_0^{\tf} \, \Big [\half 
    \norm{u(t)+R^{-1}B\t S(t)x(t)}_R^2  +\half \varpi +\half \trace(S(t) D) \Big] \, \d t \Bigg \} \label{mod-obj-moc},
\end{align}
where \(v \mapsto \norm{v}_R^2\Let v\t Rv\). Since \(R\succ 0\), the square term is minimized pointwise by \(u\as(t)=-R^{-1}B\t S(t)x(t)\) for all \(t\in[0,\tf].\) 
Hence, the optimal controller (whenever it exists) has the linear state-feedback form \(t \mapsto u^*(t)=K\as(t)x(t)\), with \(t \mapsto K\as(t) \Let -R^{-1}B\t S\as(t)\). The function \(S\as(\cdot)\) associated with the optimizer is selected together with the covariance trajectory so that the conditions \(\Sigma(0)=\initcov\) and \(\Sigma(\tf\as)=\fincov\) are satisfied. This proves our claim.
\end{proof}

\begin{proof}[\highlight{Proof of Theorem \ref{Theorem1}}]
A direct application of the PMP~\cite{ref:VinBet-DynOpt} to \Cref{prob:Det:Cov:Steer} yields that the optimal feedback gain is,
\begin{align} \label{theorem1:opt_cont}
   t \mapsto K^*(t)=-R^{-1} B\t \Pi(t),
\end{align}
where the corresponding closed-loop state equation is given by
\begin{align}
\hspace{-1 em} \dot{\Sigma}(t)&=A\Sigma(t)+\Sigma(t)A\t-BR^{-1}B\t\Pi(t)\Sigma(t)\nn \\& -\Sigma(t)\Pi(t)BR^{-1}B\t+D \, \text{ for all }t\in[0,\tfc], \label{final_cov_dyn}
\end{align}
with $\Sigma(0)=\initcov $, $\Sigma(\tfc) = \fincov$,
and the co-state matrix $\Pi(\cdot)$ satisfies~\eqref{final_ricc_dyn} with both $\Pi(0)$, $\Pi(\tfc)$ being free. 
\end{proof}

%% file: sections/appendixB.tex
\section{}\label{sec:appen:b}

This section establishes the statements and proofs of the technical results, mainly the continuity, boundedness, and compactness properties of the co-state/covariance trajectories, and the asymptotic behavior of the Hamiltonian. These results are used to support the analysis of Section~\ref{Section_5}.

\begin{myOCP}
\begin{lemma}[Continuity of the optimal co-state and the Hamiltonian]
\label{lem:reg:init_cost}
Consider the family of optimal covariance/co-state trajectories
$\{(\Sigma_{\tf}(\cdot),\Pi_{\tf}(\cdot))\}_{\tf>0}$ associated with the fixed-final time version of \Cref{prob:Det:Cov:Steer}, and let Assumption~\ref{assump:standing} hold.
Then, the maps $\tf\mapsto\Pi_{\tf}(0)$ and $(\tf,t)\mapsto\big(\Sigma_{\tf}(t),\Pi_{\tf}(t)\big)$, for $0\le t\le\tf$, are continuous on $(0,+\infty)$.
Consequently, the map $\tf\mapsto\hamil_{\tf}(\cdot)$ is continuous on $(0,+\infty)$.
\end{lemma}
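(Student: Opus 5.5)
The plan is to reduce continuity in $\tf$ to continuity of a finite-dimensional implicit map, namely the shooting map $\Pi_0\mapsto\Sigma(\tf)$, and then use the homeomorphism property from Fact~\ref{rem:fixed_horizon_wellposedness}. Fix $\Pi_0\in\mathbb{S}^n$ and $\tf>0$. Let $\Pi(\cdot;\Pi_0)$ solve the Riccati equation \eqref{final_ricc_dyn} with $\Pi(0)=\Pi_0$. Let $\Sigma(\cdot;\Pi_0)$ solve the closed-loop Lyapunov equation \eqref{final_cov_dyn} with $\Sigma(0)=\initcov$.

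First I will express $\Pi$ through the Hamiltonian matrix \eqref{ham_matr}. We have $\Pi(t)=\bigl(\Phi_{21}(t,0)+\Phi_{22}(t,0)\Pi_0\bigr)\bigl(\Phi_{11}(t,0)+\Phi_{12}(t,0)\Pi_0\bigr)^{-1}$, wherever the inverse exists. Wherever $\Pi$ exists, the matrix $\bar\Phi_{\bar A}$ in \eqref{STM_Cov} is exactly this invertible denominator. Then \eqref{cov_analytic_stm} writes $\Sigma(t;\Pi_0)$ explicitly. Define
\[
F(\tf,\Pi_0)\Let \Sigma(\tf;\Pi_0),
\]
on the open set $\mathcal{O}\Let\{(\tf,\Pi_0)\,:\,\Pi(\cdot;\Pi_0)\text{ exists on }[0,\tf]\}$. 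By standard continuous dependence of ODE solutions on initial data and on the time interval, $F$ is jointly continuous on $\mathcal{O}$. Fact~\ref{rem:fixed_horizon_wellposedness} (via \cite[Lemma~8]{cite17}) says that for each fixed $\tf$, the map $F(\tf,\cdot)$ is a homeomorphism onto $\mathbb{S}^n_{++}$. Hence $\Pi_{\tf}(0)=F(\tf,\cdot)^{-1}(\fincov)$ is well defined.

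The main step, and the expected obstacle, is to show that $\tf\mapsto F(\tf,\cdot)^{-1}(\fincov)$ is continuous. I will argue by sequences. Let $\tf^k\to\tf>0$ and set $\Pi_0^k\Let\Pi_{\tf^k}(0)$.

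\emph{Boundedness.} I first show that $\{\Pi_0^k\}$ is bounded. The idea is a cost comparison. Since $(A,B)$ is controllable, for $\tf^k$ in a compact subinterval of $(0,+\infty)$ one can build a feasible gain steering $\initcov$ to $\fincov$ with uniformly bounded cost, for instance by reparametrizing time. This bounds $V(\tf^k)$ uniformly. If $\|\Pi_0^k\|\to\infty$ along a subsequence, I expect either the feedback energy $\int\trace(\Pi BR^{-1}B\t\Pi\Sigma)$ to blow up, or the endpoint $\Sigma(\tf^k)$ to degenerate away from $\fincov$. I will try to derive the contradiction through properness of the homeomorphism $F(\tf,\cdot)$, made locally uniform in $\tf$.

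\emph{Identification of the limit.} If boundedness can be established, extract $\Pi_0^k\to\bar\Pi_0$. The limit pair $(\tf,\bar\Pi_0)$ must lie in $\mathcal{O}$; this is delicate because Riccati solutions can escape in finite time. I will rule out escape using the uniform cost bound: escape would force $\Sigma$ to leave $\mathbb{S}^n_{++}$ or the cost to blow up. Given $(\tf,\bar\Pi_0)\in\mathcal{O}$, joint continuity yields $F(\tf,\bar\Pi_0)=\fincov$. Uniqueness from \cite[Theorem~3]{cite17} then gives $\bar\Pi_0=\Pi_{\tf}(0)$. Since every subsequence has a further subsequence converging to the same limit, the whole sequence converges, and continuity of $\tf\mapsto\Pi_{\tf}(0)$ follows.

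Joint continuity of $(\tf,t)\mapsto(\Sigma_{\tf}(t),\Pi_{\tf}(t))$ on $\{0\le t\le\tf\}$ then follows by composing continuous maps. The map $\Pi_{\tf}(0)$ is continuous in $\tf$, and ODE flows depend continuously on the initial data and on time. For the Hamiltonian, I will use the fact that $\hamil_{\tf}$ is constant along the extremal (Theorem~\ref{Theorem1}). So it can be evaluated at $t=0$ as $\hamil_{\tf}(\initcov,\Pi_{\tf}(0))$. This is a polynomial in $\Pi_{\tf}(0)$, hence continuous in $\tf$.
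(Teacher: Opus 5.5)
\textbf{There is a genuine gap in the middle step, and that step carries the whole content of the lemma.} Your outer structure is sound, and it matches the paper: the shooting map $F(\tau,P)$ on the open set $\mathcal{O}$, identification of a limit root through the uniqueness result \cite[Theorem~3]{cite17}, the final composition step, and evaluation of the constant Hamiltonian at $t=0$.

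You never show that $\Pi_0^k=\Pi_{\tf^k}(0)$ stays bounded. Nor do you show that a limit point $\bar\Pi_0$ satisfies $(\tf,\bar\Pi_0)\in\mathcal{O}$. Both are left as expectations, and the mechanisms you sketch do not deliver them.
\begin{itemize}
\item \emph{Time reparametrization} does not turn a feasible covariance path on $[0,\tf]$ into one on $[0,\tau]$. The discrepancy $(\tf/\tau-1)(A\Sigma+\Sigma A^\top+D)$ would have to equal $BY+Y^\top B^\top$ for some $Y$. But $v^\top(BY+Y^\top B^\top)v=0$ for every $v\in\ker B^\top$, so this fails in general when $B$ lacks full row rank.
\item \emph{A uniform bound on $V(\tf^k)$}, even if granted, controls only $\int\trace(\Pi BR^{-1}B^\top\Pi\Sigma)\,\d t$, not $\|\Pi_0^k\|$. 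Your phrase ``properness of $F(\tau,\cdot)$, locally uniform in $\tau$'' restates the claim rather than proving it.
\item \emph{Escape.} Each $\Sigma_k$ is positive definite on $[0,\tf^k]$ by \eqref{cov_analytic_stm}, so a degenerate limit covariance is not a contradiction. A bounded cost does not exclude this degeneration when $D$ is singular. In the scalar model with $D=0$ and $g\Let B^2/R$, near an escape time $t^\ast$ one has $\pi\approx 1/(g(t^\ast-t))$ and $\sigma\approx c(t^\ast-t)^2$, so the running cost $g\pi^2\sigma$ stays bounded.
\end{itemize}

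\textbf{How the paper avoids this.} It argues locally rather than by compactness. After normalizing time to $[0,1]$, the residual $\residue(\tau,P)=\widehat\Sigma(1;\tau,P)-\fincov$ is $C^1$ on an open set. Its partial derivative $\mathsf{D}_P\residue$ at the true root $(\tf^0,\Pi_{\tf^0}(0))$ is an isomorphism; this is the nonsingular Jacobian from the proof of \cite[Lemma~8]{cite17}, applied to the scaled data $\tf^0A$, $\sqrt{\tf^0}B$, $\tf^0Q$, $\tf^0D$. The implicit function theorem then gives a $C^1$ branch of roots near $\Pi_{\tf^0}(0)$, which global uniqueness identifies with $\tau\mapsto\Pi_\tau(0)$. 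Because the branch stays near a point of the open domain, no a priori bound and no escape analysis are needed.

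\textbf{A fix within your framework.} If you want to keep your purely topological setup, invariance of domain closes the gap. Define $\Psi(\tau,P)\Let(\tau,F(\tau,P))$ on the open set $\mathcal{O}\subset\mathbb{R}\times\mathbb{S}^n$. It is continuous, and it is injective because each slice is injective by the fixed-horizon homeomorphism property you already invoke. Hence $\Psi$ is a homeomorphism onto an open set containing $(\tau,\fincov)$ for every $\tau>0$, and $\Pi_\tau(0)$ is the second component of $\Psi^{-1}(\tau,\fincov)$.

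\textbf{Minor point.} Constancy of $\hamil_{\tf}$ along a fixed-horizon extremal follows from autonomy of the dynamics and the cost. It does not come from Theorem~\ref{Theorem1}, which concerns the free-final-time condition.
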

\end{myOCP}

\begin{proof}
Under Assumption \ref{assump:standing:1}, for every fixed finite \(\tf>0\), \Cref{prob:Det:Cov:Steer} admits a unique covariance/co-state solution \(t \mapsto \bigl(\Sigma_{\tf}(t),\Pi_{\tf}(t)\bigr)\) for any prescribed endpoint covariances \(\initcov,\fincov\in\mathbb{S}^n_{++}\) \cite[Theorem 3]{cite17}. 
We prove the asserted continuity by viewing the initial co-state $\Pi_{\tf}(0)$
as the solution of a \emph{parameter-dependent endpoint equation}.

Fix an arbitrary reference horizon \(\tf^0>0\), and define \(G\Let BR^{-1}B\t.\)
For \(\tau>0\) and \(P\in\mathbb{S}^n\), introduce the normalized time variable \(s\Let t/\tau\in[0,1]\), and consider the initial value problem
\begin{align}
&\frac{\dd}{\dd s}\widehat{\Pi}(s;\tau,P) =
\tau\Bigl(-Q-A\t\widehat{\Pi}(s;\tau,P)-\widehat{\Pi}(s;\tau,P)A \nn \\& \qquad\qquad+\widehat{\Pi}(s;\tau,P)G\widehat{\Pi}(s;\tau,P)\Bigr), \label{eq:normalized:riccati}\\
&\frac{\dd}{\dd s}\widehat{\Sigma}(s;\tau,P)
=
\tau\Bigl(\bigl(A-G\widehat{\Pi}(s;\tau,P)\bigr)\widehat{\Sigma}(s;\tau,P)
 \nn\\&\qquad\qquad + \widehat{\Sigma}(s;\tau,P)\bigl(A-G\widehat{\Pi}(s;\tau,P)\bigr)\t+D\Bigr), \text{ with} \label{eq:normalized:covariance}\\
&\widehat{\Sigma}(0;\tau,P)=\initcov\quad\text{and}\quad \widehat{\Pi}(0;\tau,P)=P. \label{eq:normalized:initial}
\end{align}
Let \(\mathcal{D}\subset(0,+\infty)\times\mathbb{S}^n\) denote the set of pairs \((\tau,P)\) for which the solution of \eqref{eq:normalized:riccati}--\eqref{eq:normalized:initial} exists on the entire interval \([0,1]\). Since the right-hand sides of \eqref{eq:normalized:riccati} and \eqref{eq:normalized:covariance} are smooth in \((\widehat{\Sigma},\widehat{\Pi},\tau)\), the set \(\mathcal{D}\) is open and the solution map
\begin{align*}
  [0,1] \times \mathcal{D} \ni  (s,\tau,P)\mapsto\bigl(\widehat{\Sigma}(s;\tau,P),\widehat{\Pi}(s;\tau,P)\bigr)
\end{align*}
is continuously differentiable on its domain; see, for example, \cite[Theorem~2.11]{teschl2012ode}.

Define now the endpoint residual map
\begin{align*}
\residue:\mathcal{D}\to\mathbb{S}^n,\quad
(\tau,P) \mapsto \residue(\tau,P)\Let\widehat{\Sigma}(1;\tau,P)-\fincov.
\end{align*}
For each fixed \(\tau>0\), the initial co-state \(P_{\tau}\Let\Pi_{\tau}(0)\) associated with the unique solution of the fixed-horizon problem is characterized by
\(\residue(\tau,P_{\tau})=0.\)
In particular, setting \(P^0\Let\Pi_{\tf^0}(0)\), we have \(\residue \bigl(\tf^0,P^0 \bigr)=0.\)

We next verify that the derivative of \(\residue\) with respect to \(P\) is nonsingular at \((\tf^0,P^0)\).
For the fixed parameter \(\tau=\tf^0\), the normalized equations \eqref{eq:normalized:riccati}--\eqref{eq:normalized:covariance} are the fixed-horizon covariance/co-state equations on \([0,1]\) associated with the scaled matrices
\[
A^0\Let\tf^0A, \,\,B^0\Let\sqrt{\tf^0}\,B,\,\, Q^0\Let\tf^0Q,\,\, \text{and } D^0\Let\tf^0 D.
\]
Since \((A,B)\) is controllable, the pair \((A^0,B^0)\) is also controllable. Therefore, the endpoint-map analysis in \cite[Lemma 8]{cite17} applies to the map \(P \mapsto\widehat{\Sigma}(1;\tf^0,P).\)
More precisely, the proof of \cite[Lemma 8]{cite17} shows that this map is continuously differentiable and that its Jacobian with respect to the initial co-state is nonsingular at every admissible initial co-state. Consequently,
\begin{align*}
\mathsf{D}_P\residue(\tf^0,P^0):\mathbb{S}^n \rightarrow \mathbb{S}^n
\end{align*}
is an isomorphism.\footnote{Here \(\mathsf{D}_P \residue\) is the usual (partial) Frechet derivative \cite[Chapter 5]{ref:DonRoc-14}.}
The implicit function theorem \cite[Section 1.2]{ref:DonRoc-14} now yields an open neighborhood \(\mathcal{O}_1\subset(0,+\infty)\) of \(\tf^0\), an open neighborhood \(\mathcal{O}_2\subset\mathbb{S}^n\) of \(P^0\), and a unique continuously differentiable map \(P:\mathcal{O}_1 \to \mathcal{O}_2\)
such that
$P(\tf^0)=P^0$ and
$\residue(\tau,P(\tau))=0$ for every $\tau\in\mathcal{O}_1$.
For each \(\tau\in\mathcal{O}_1\), both \(P(\tau)\) and \(\Pi_{\tau}(0)\) generate a solution satisfying the same covariance boundary conditions \(\widehat{\Sigma}(0;\tau,P)=\initcov\) and \(\widehat{\Sigma}(1;\tau,P)=\fincov.\)
The uniqueness of the fixed-horizon covariance/co-state solution \cite[Theorem 3]{cite17} therefore implies that \(P(\tau)=\Pi_{\tau}(0)\) for every \(\tau\in\mathcal{O}_1.\)
Hence, \(\tau\mapsto\Pi_{\tau}(0)\) is continuously differentiable, and, in particular, continuous at \(\tf^0\). 
Since \(\tf^0>0\) is arbitrary, the map \(\tf\mapsto\Pi_{\tf}(0)\) is continuous on \((0,+\infty)\).

We next establish continuity of the complete covariance/co-state trajectories. 
To this end, define the triangular domain
\begin{align*}
\mathcal{T}_D\Let \aset[\big]{(\tf,t)\in(0,+\infty)\times[0,+\infty) \suchthat 0\leq t\leq\tf }.    
\end{align*}

For every \((\tf,t)\in\mathcal{T}_D\), the original and normalized trajectories are related by
\begin{align*}
&\Sigma_{\tf}(t)=\widehat{\Sigma}\bigl(t/\tf;\tf,\Pi_{\tf}(0)\bigr), \\
&\Pi_{\tf}(t)=\widehat{\Pi}\bigl(t/\tf;\tf,\Pi_{\tf}(0)\bigr).
\end{align*}
The map \((\tf,t)\mapsto t/\tf\) is continuous on \(\mathcal{T}_D\), the map \(\tf\mapsto\Pi_{\tf}(0)\) is continuous, and the normalized solution map is continuous in time, the parameter, and the initial data. It follows by composition that the map
\(
    (\tf,t)\mapsto\bigl(\Sigma_{\tf}(t),\Pi_{\tf}(t)\bigr)
\)
is continuous on \(\mathcal{T}_D\).
Finally, and since the dynamics and the running cost are autonomous, the Hamiltonian is constant along every extremal. 
Thus, \(\hamil_{\tf}(t)=\hamil\bigl(\initcov,\Pi_{\tf}(0)\bigr)\) for all $t \in [0,\tf]$.
Since \((\Sigma,\Pi)\mapsto\hamil(\Sigma,\Pi)\) is continuous and \(\tf\mapsto\Pi_{\tf}(0)\) is continuous, the map \(\tf\mapsto\hamil_{\tf}(\cdot)\) is continuous on \((0,+\infty)\). Our proof is complete.
\end{proof}

We present a compactness property of the fixed-final time optimal covariance/co-state trajectories, which are the solutions to the fixed-final-time version of \Cref{prob:Det:Cov:Steer}. 
This property will be used in the subsequent large-horizon analysis to carry out existence arguments on compact time intervals. 

\begin{myOCP}
\begin{proposition}[Compactness of optimal covariance/co-state trajectories]
\label{cor:cov_costate_compactness}
Consider the family of optimal covariance/co-state trajectories
$\{(\Sigma_{\tf}(\cdot),\Pi_{\tf}(\cdot))\}_{\tf>0}$ associated with the fixed-final time version of \Cref{prob:Det:Cov:Steer}. Then: 
\begin{enumerate}[label={\textup{(\ref{cor:cov_costate_compactness}-\alph*)}}, leftmargin=*, widest=b, align=left]
\item \label{cor:cov_costate_compactness:1} Under Assumption~\ref{assump:standing}, the family $\{(\Sigma_{\tf}(\cdot),\Pi_{\tf}(\cdot))\}_{\tf>0}$ is uniformly bounded on compact subintervals of $\tf \in (0,+\infty)$. 
\item \label{cor:cov_costate_compactness:2} If, in addition, Assumption~\ref{assump:point_limit} holds, then the family is locally bounded in time, uniformly in $\tf$, that is, for each fixed $T>0$, there exists $\alpha_T> 0$ such that $\|\Sigma_{\tf}(t)\|+\|\Pi_{\tf}(t)\|\le\alpha_T$ for all $\tf \ge T$ and $t\in [0,T]$, i.e., 
\begin{align}
\hspace{-1 em} \sup_{\tf\ge T}\ \sup_{t\in[0,T]}\big(\|\Sigma_{\tf}(t)\|+\|\Pi_{\tf}(t)\|\big) \le \alpha_T. \label{eq:horizon_uniform_bound}
\end{align}
Hence, every subsequential limiting trajectory obtained as $\tf\to+\infty$ is bounded on compact intervals.
\end{enumerate}
\end{proposition}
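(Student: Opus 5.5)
For part \ref{cor:cov_costate_compactness:1}, the natural tool is the continuity already established in Lemma~\ref{lem:reg:init_cost}. Fix a compact interval $[a,b]\subset(0,+\infty)$ of final times. The map $\tf\mapsto\Pi_{\tf}(0)$ is continuous, so $\{\Pi_{\tf}(0):\tf\in[a,b]\}$ is compact. I would then work with the normalized solution map $(s,\tau,P)\mapsto(\widehat\Sigma(s;\tau,P),\widehat\Pi(s;\tau,P))$ from the proof of that lemma, which is continuous on $[0,1]\times\mathcal D$. The set $\{(\tau,\Pi_\tau(0)):\tau\in[a,b]\}$ is a compact subset of $\mathcal D$, so the image of $[0,1]$ times this set is compact. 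Hence $\sup_{\tf\in[a,b]}\sup_{t\in[0,\tf]}(\|\Sigma_{\tf}(t)\|+\|\Pi_{\tf}(t)\|)<\infty$. Equivalently, the map $(\tf,t)\mapsto(\Sigma_{\tf}(t),\Pi_{\tf}(t))$ is continuous on the compact set $\{(\tf,t):\tf\in[a,b],\,0\le t\le\tf\}\subset\mathcal T_D$, so it attains a maximum. This part is routine.

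For part \ref{cor:cov_costate_compactness:2}, fix $T>0$. The bound on $\tf\in[T,T']$ for any finite $T'$ follows from part \ref{cor:cov_costate_compactness:1}, so the issue is uniformity as $\tf\to+\infty$. I would argue by contradiction. Suppose there are $\tf_k\to+\infty$ and $t_k\in[0,T]$ with $\|\Sigma_{\tf_k}(t_k)\|+\|\Pi_{\tf_k}(t_k)\|\to\infty$. Assumption~\ref{assump:point_limit} asserts that the family $\{\Sigma_{\tf}(\cdot)\}$ has a limiting trajectory $\Sigma_\star(\cdot)$ as $\tf\to+\infty$, with a bounded positive-definite limit at infinity. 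I will read this as saying that $\Sigma_{\tf}\to\Sigma_\star$ locally uniformly on $[0,\infty)$ and that $\Sigma_\star$ is continuous. Then $\Sigma_\star$ is bounded on $[0,T]$, and $\Sigma_{\tf}$ is uniformly bounded on $[0,T]$ for $\tf$ large.

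It remains to bound the co-state, and here I would exploit the Hamiltonian structure. Along each extremal the Hamiltonian is constant, and $\Pi$ solves the Riccati equation \eqref{final_ricc_dyn}, which is independent of $\Sigma$. The plan is to show that $\Pi_{\tf}(0)$ stays bounded as $\tf\to\infty$. Once that holds, the Riccati flow from a bounded set of initial data over the fixed window $[0,T]$ is bounded, provided no finite escape occurs in $[0,T]$. Finite escape is excluded because the solution exists on $[0,\tf]\supset[0,T]$, and a compactness argument then gives a uniform bound: the escape time is lower semicontinuous in the initial data on a compact set.

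To bound $\Pi_{\tf}(0)$ I would combine two facts. First, $\dot\Sigma_{\tf}$ must stay bounded, since $\Sigma_{\tf}$ converges locally uniformly to a limit solving an ODE of the same type. Second, from \eqref{final_cov_dyn}, $\mathcal S(G\Pi\Sigma)=A\Sigma+\Sigma A\t+D-\dot\Sigma$ with $G=BR^{-1}B\t$. This controls $G\Pi_{\tf}(t)$ on $[0,T]$. To recover all of $\Pi$ from $G\Pi$, I would use controllability of $(A,B)$: differentiating along the Riccati equation propagates bounds of $B\t\Pi$ to $B\t A\t{}^k\Pi$, and these bound $\Pi$.

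I expect this co-state bound to be the main obstacle. Its difficulty is that Assumption~\ref{assump:point_limit} is stated only at the level of $\Sigma$. If the direct argument becomes messy, my fallback is to define the limiting pair using the homeomorphism $\Pi(0)\leftrightarrow\fincov$ of Fact~\ref{rem:fixed_horizon_wellposedness}. I would then assume the limiting trajectory in Assumption~\ref{assump:point_limit} comes with a limiting initial co-state, which immediately gives boundedness of $\Pi_{\tf}(0)$.
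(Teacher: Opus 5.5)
Your part (a) is correct and matches the paper's Step 1: continuity of $\tf\mapsto\Pi_{\tf}(0)$ composed with the continuous solution map on a compact set. Part (b), however, has a genuine gap exactly where you expected one, in bounding $\Pi_{\tf}(0)$.

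The step ``$\dot\Sigma_{\tf}$ must stay bounded, since $\Sigma_{\tf}$ converges locally uniformly to a limit solving an ODE of the same type'' is circular. Locally uniform convergence gives no control of derivatives. Moreover, by \eqref{final_cov_dyn}, $\dot\Sigma_{\tf}$ contains $BR^{-1}B^{\top}\Pi_{\tf}\Sigma_{\tf}$ explicitly, so bounding it is essentially the co-state bound you are trying to prove. Likewise, asserting that the limit solves a covariance equation presupposes a limiting co-state.

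The next two steps are also unsupported as written:
\begin{itemize}
\item Recovering $G\Pi$ from $\mathcal S(G\Pi\Sigma)$ needs a uniform positive-definite lower bound on $\Sigma_{\tf}$ over $[0,T]$. It also needs an argument that $\Pi\mapsto\mathcal S(G\Pi\Sigma)$ is injective modulo $\{\Pi: B^{\top}\Pi=0\}$. You supply neither.
\item Pointwise bounds on $B^{\top}\Pi$ do not bound its derivatives, so ``differentiating along the Riccati equation'' does not reach $B^{\top}(A^{\top})^k\Pi$.
\end{itemize}
The fallback simply adds a hypothesis that Assumption~\ref{assump:point_limit} does not contain.

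The idea you are missing is the one the paper uses. Along the optimal pair, the closed-loop transition matrix is affine in the initial co-state: $\bar\Phi_{\bar A}(\tau_3,0)=\Phi_{11}(\tau_3,0)+\Phi_{12}(\tau_3,0)\Pi_{\tf}(0)$, with $\Phi_{12}(\tau_3,0)$ invertible. Hence $\|\bar\Phi_{\bar A}(\tau_3,0)\|\ge\sigma_{\min}\bigl(\Phi_{12}(\tau_3,0)\bigr)\|\Pi_{\tf}(0)\|-\|\Phi_{11}(\tau_3,0)\|$. Since the diffusion integral in \eqref{cov_analytic_stm} is positive semidefinite, you also get $\|\Sigma_{\tf}(\tau_3)\|\ge\lambda_{\min}(\initcov)\|\bar\Phi_{\bar A}(\tau_3,0)\|^2$. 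So an upper bound on $\Sigma_{\tf}$ at a \emph{single} time $\tau_3>0$, uniform in $\tf\ge\tau_3$, already bounds $\sup_{\tf\ge\tau_3}\|\Pi_{\tf}(0)\|$. That single-time bound is supplied by Assumption~\ref{assump:point_limit} together with Lemma~\ref{lem:reg:init_cost}. No derivative information, no lower bound on $\Sigma$, and no observability argument are needed.

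A smaller point concerns your propagation step. Lower semicontinuity of the escape time gives the inequality in the wrong direction to guarantee that \emph{limit points} of $\{\Pi_{\tf}(0)\}$ still generate solutions on $[0,T]$. The compactness argument therefore needs an extra ingredient, for instance a Riccati comparison argument based on the bound $\Pi_{\tf}(0)\prec-\Phi_{12}(\tf,0)^{-1}\Phi_{11}(\tf,0)$ quoted in the proof of Lemma~\ref{inf_hor_cs}. The paper's own restriction to the ball $\mathcal C_\infty$ passes over the same issue.
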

\end{myOCP}

\begin{proof} Our proof consists of two steps.

\proofstep{coroll:step:one}{Uniform boundedness on compact subintervals} We fix a compact subinterval $[\tau_1,\tau_2]\subset(0,+\infty)$ with $0<\tau_1<\tau_2<+\infty$.
From \cite[Theorem 3]{cite17}, for every $\tf\in[\tau_1,\tau_2]$ the optimality system \eqref{final_ricc_dyn},~\eqref{final_cov_dyn} admits a unique solution $(\Sigma_{\tf}(\cdot),\Pi_{\tf}(\cdot))$.
Moreover, continuity of $(\Sigma_{\tf}(\cdot),\Pi_{\tf}(\cdot))$ on the compact set $[\tau_1,\tau_2]$ yields $m \Let \sup_{\tf\in[\tau_1,\tau_2]}\|\Pi_{\tf}(0)\| < +\infty$.
The right-hand sides of \eqref{final_ricc_dyn},~\eqref{final_cov_dyn} are locally Lipschitz in $(\Sigma,\Pi)$, so the solution of the associated ODE depends continuously on the initial data and on time \cite[Theorem~2.11]{teschl2012ode}. 
Restricting the initial data to the compact set 
\begin{align*}
    \mathcal{C}\Let\{\initcov\}\times\aset[]{\Pi_0\in\mathbb{S}^n \suchthat \|\Pi_0\|\le m},
\end{align*}
and the pair $(\tf,t)$ to the compact triangle $\mathcal{T}\Let\aset[]{(\tf,t)\suchthat \tf\in[\tau_1,\tau_2],\ 0\le t\le\tf}$, along which the optimal pairs exist by Fact~\ref{rem:fixed_horizon_wellposedness}, the map $(\tf,t)\mapsto(\Sigma_{\tf}(t),\Pi_{\tf}(t))$, being the composition of the continuous map $\tf\mapsto\Pi_{\tf}(0)\in\mathcal{C}$ with the solution map, is continuous on $\mathcal{T}$, and therefore its image is bounded.
Consequently,
\begin{align*}
\sup_{\tf\in[\tau_1,\tau_2]}\ \sup_{t\in[0,\tf]}\big(\|\Sigma_{\tf}(t)\|+\|\Pi_{\tf}(t)\|\big) < +\infty,
\end{align*}
which is the claimed uniform boundedness on $[\tau_1,\tau_2]$. 

\proofstep{coroll:step:two}{Boundedness of the limiting trajectories} Now suppose Assumption~\ref{assump:point_limit} also holds. 
Fix $\tau_3>0$. 
By Lemma~\ref{lem:reg:init_cost}, the map $\tf\mapsto\Sigma_{\tf}(\tau_3)$ is continuous on $[\tau_3,+\infty)$ and, by Assumption~\ref{assump:point_limit}, converges to $\Sigma_{\star}(\tau_3)$ as $\tf\to+\infty$. 
Hence, 
\begin{equation} \label{eq:SigmaB}
\sup_{\tf\ge\tau_3}\|\Sigma_{\tf}(\tau_3)\|<+\infty.
\end{equation}
We claim that
$
m_\infty \Let \sup_{\tf\ge \tau_3}\|\Pi_{\tf}(0)\| < +\infty.
$
To see this, assume, by contradiction, that no such bound exists. 
Then, there exists a sequence $\tf^k\ge\tau_3$ such that $\|\Pi_{\tf^k}(0)\|\to+\infty$. 
From Fact~\ref{rem:fixed_horizon_wellposedness} applied with the final time $\tau_3$, the map $\Pi(0)\mapsto\Sigma(\tau_3)$ is a homeomorphism, and therefore $\Phi_{12}(\tau_3,0)$ is invertible. 
Since $\Phi_{11}(\tau_3,0)$ and $\Phi_{12}(\tau_3,0)$ are fixed matrices that depend only on $\tau_3$, the only quantity varying along the sequence $\{\tf^k\}_{k\ge 1}$ is $\Pi_{t_{\mathrm f}^k}(0)$. 
Then, it follows from~\eqref{STM_Cov} that
\begin{align*}
&\|\bar\Phi_{\bar A}(\tau_3,0)\| 
= \|\Phi_{11}(\tau_3,0)+\Phi_{12}(\tau_3,0)\Pi_{\tf^k}(0)\| \\
&\hspace{1 em}\ge \sigma_{\min}\!\big(\Phi_{12}(\tau_3,0)\big)\,\|\Pi_{\tf^k}(0)\| - \|\Phi_{11}(\tau_3,0)\| \xrightarrow{k\to+\infty} +\infty.
\end{align*}
Since the diffusion integral term in the covariance solution~\eqref{cov_analytic_stm} is positive semidefinite and $\initcov\succ0$, we have
\begin{align*}
    \Sigma_{\tf^k}(\tau_3)&\succcurlyeq\bar\Phi_{\bar A}(\tau_3,0)\,\initcov\,\bar\Phi_{\bar A}(\tau_3,0)\t \succcurlyeq\lambda_{\min}(\initcov)\,\bar\Phi_{\bar A}(\tau_3,0)\bar\Phi_{\bar A}(\tau_3,0)\t,
\end{align*}
and
\begin{align*}
\|\Sigma_{\tf^k}(\tau_3)\| \ge \lambda_{\min}(\initcov)\,\|\bar\Phi_{\bar A}(\tau_3,0)\|^2 \xrightarrow{k\to+\infty} +\infty,
\end{align*}
contradicting the bound \eqref{eq:SigmaB}. 
Therefore, $m_\infty<+\infty$. 

Finally, fix any $T\ge\tau_3$ and restrict the initial data to the compact set $\mathcal{C}_\infty\Let\{\initcov\}\times\aset[]{\Pi_0\in\mathbb{S}^n \suchthat \|\Pi_0\|\le m_\infty}$. 
For every $\tf\ge T$, the optimal pair 
$(\Sigma_{\tf}(\cdot),\Pi_{\tf}(\cdot))$ 
exists on $[0,\tf]\supseteq[0,T]$ by Fact~\ref{rem:fixed_horizon_wellposedness}, with initial data $(\initcov,\Pi_{\tf}(0))\in\mathcal{C}_\infty$.
Hence, following the same argument as in \ref{coroll:step:one}, the solution map 
$(\tf,t)\mapsto(\Sigma_{\tf}(t),\Pi_{\tf}(t))$
is continuous on the compact set $\mathcal{C}_\infty\times[0,T]$.
It follows that its image is bounded and hence there exists some $\alpha_T > 0$ such that \eqref{eq:horizon_uniform_bound} holds. 
Taking limits yields boundedness of every subsequential limiting trajectory on compact intervals. 
The assertion stands established.
\color{black}
\end{proof}

\begin{proof}[\highlight{Proof of Theorem~\ref{prop_exist_AC}}]
Our proof consists of five steps.

\noindent\proofstep{step:one}{Compactness and extraction of limiting arcs}
Let $\{\tf^k\}_{k\ge 1}\subset(0,+\infty)$ be any sequence such that $\tf^k\to+\infty$, and let $\bigl(\Sigma_k(\cdot),\Pi_k(\cdot)\bigr) \Let \bigl(\Sigma_{\tf^k}(\cdot),\Pi_{\tf^k}(\cdot)\bigr)$ denote the corresponding optimal covariance/co-state trajectories on $[0,\tf^k]$.
Fix $T>0$. Since $\tf^k\to+\infty$, one has $[0,T]\subset[0,\tf^k]$ for all sufficiently large $k$. 

By Assumptions~\ref{assump:standing:1}, ~\ref{assump:point_limit}, and the uniform boundedness result of Proposition~\ref{cor:cov_costate_compactness}, the family $\{(\Sigma_k,\Pi_k)\}_{k\ge1}$ is uniformly bounded on $[0,T]$ uniformly in $k$. 
Hence, there exists $C_T>0$ such that, for all sufficiently large $k$,
\begin{align*}
\sup_{t\in[0,T]} \bigl( \|\Sigma_k(t)\|+\|\Pi_k(t)\|\bigr)\le C_T .
\end{align*}
Then, from the Riccati and covariance dynamics \eqref{final_ricc_dyn} and \eqref{final_cov_dyn}, we obtain, for all sufficiently large $k$ and all $t\in[0,T]$,
\begin{align*}
    \|\dot\Pi_k(t)\|  &\le \ell_{\Pi,T}\Let\|Q\|+2\|A\|C_T+\|BR^{-1}B\t\|C_T^2, \\ \|\dot\Sigma_k(t)\|
    &\le \ell_{\Sigma,T} \Let2\|A\|C_T +2\|BR^{-1}B\t\|C_T^2 +\|D\|,
\end{align*}
It follows that the family $\aset[\big]{(\dot\Sigma_k,\dot\Pi_k)}_{k\ge1}$ is uniformly bounded on $[0,T]$.
Moreover, by defining $\ell_T \Let \max\{\ell_{\Pi,T},\ell_{\Sigma,T}\}$, we see that, for all sufficiently large $k$ and all $t\in[0,T]$,
$
    \|\dot\Pi_k(t)\| \le \ell_T \,\, \text{and}\,\,  \|\dot\Sigma_k(t)\|  \le \ell_T.
$
Consequently, for any $s,t\in[0,T]$ and for sufficiently large $k$,
\begin{align*}
    \norm{\Pi_k(t)-\Pi_k(s)} &\le \int_s^t \|\dot\Pi_k(\tau)\|\,\d \tau \le  \ell_T |t-s|,\\ 
    \norm{\Sigma_k(t)-\Sigma_k(s)} &\le  \int_s^t \|\dot\Sigma_k(\tau)\|\,\d \tau  \le \ell_T |t-s|.
\end{align*}
Thus, from~\cite[Section 3.1.10]{ref:LipFunc:book}, the family $\aset[\big]{(\Sigma_k,\Pi_k)}_{k\ge1}$ is equi-Lipschitz on $[0,T]$ and hence equicontinuous on $[0,T]$.
By the Arzel\`{a}--Ascoli theorem \cite[Theorem 7.25]{rudin1976}, one may extract a subsequence converging uniformly on $[0,T]$. 
Applying this argument for $T=1,2,\dots$ and using a standard diagonal extraction argument (see, e.g., \cite[Theorem 7.23]{rudin1976}) yields a subsequence, still denoted by $\aset[\big]{(\Sigma_k,\Pi_k)}_{k\ge1}$ for simplicity, and a limiting trajectory pair
$
\bigl(\Sigma^+(\cdot),\Pi^+(\cdot)\bigr):[0,+\infty)\to \mathbb{S}_{+}^n\times\mathbb{S}^n,
$
such that $(\Sigma_k,\Pi_k)\to(\Sigma^+,\Pi^+)$ as $k\to +\infty$, uniformly on every compact interval of $[0,+\infty)$.

Similarly, for all $\tau\in[0,\tf^k]$, define the time-reversed trajectories
\begin{align} \label{eqn:time_rev_cov_ricc}
\hat\Sigma_k(\tau) \Let \Sigma_k(\tf^k-\tau), \quad \hat\Pi_k(\tau) \Let \Pi_k(\tf^k-\tau).
\end{align}
Applying the same compactness argument yields a subsequence converging uniformly on compact intervals of $[0,+\infty)$ to a backward limiting trajectory $\bigl(\Sigma^-(\cdot),\Pi^-(\cdot)\bigr)$, 
\black{that is,
$(\hat\Sigma_k,\hat\Pi_k)\to(\Sigma^-,\Pi^-)$ as $k\to +\infty$, uniformly on every compact interval of $[0,+\infty)$, where each fixed compact interval is contained in $[0,\tf^k]$ for all sufficiently large $k$.}

\medskip

\noindent\proofstep{step:two}{Finiteness of the limiting running cost}
Since $\mathcal G_\infty\neq\varnothing$, there exist $\Sigma_{\rm s}\in\mathcal G_\infty$ such that $A\Sigma_{\rm s}+\Sigma_{\rm s}A\t+D=0$ and $\trace(Q\Sigma_{\rm s})=0$; 
see Definition~\ref{asympt_gateway_2}.
Moreover, $\Sigma_{\rm s}\succ0$ and $Q\succcurlyeq0$, along with the latter equality implies that $Q=0$.

By the feasibility of the fixed-final time covariance steering problem between two prescribed positive-definite covariance boundary conditions (under Assumption~\ref{assump:standing}, see Fact~\ref{rem:fixed_horizon_wellposedness}), there exist $t_1,t_2>0$ and admissible feedback gains that steer $\initcov $ to $\Sigma_{\rm s}$ on $[0,t_1]$ and $\Sigma_{\rm s}$ to $\fincov $ on $[0,t_2]$.
Hence, for any $\tf>t_1+t_2$, one may construct a feasible concatenated covariance trajectory by steering from $\initcov $ to $\Sigma_{\rm s}$ on $[0,t_1]$, then setting $K(\cdot)\equiv0$ on $[t_1,\tf-t_2]$, and finally steering from $\Sigma_{\rm s}$ to $\fincov $ on $[\tf-t_2,\tf]$.

Similarly, since $\Sigma_{\rm s}\in\mathcal G_\infty$, the uncontrolled covariance dynamics keeps $\Sigma_{\rm s}$ invariant on the time interval $[t_1,\tf-t_2]$. 
Moreover, since $\varpi=0$, $Q=0$, and $K(\cdot)\equiv0$ on that interval, it contributes a zero running cost.
Therefore, the total cost of the constructed feasible trajectory becomes independent of \(\tf\). Let this cost be denoted by some constant \(C\ge 0\), and 
let \(K_{\rm{fs}}^{\tf}(\cdot)\) denote the feedback gain associated with this concatenated feasible trajectory. 

Recall the fixed-final time value function $\tf \mapsto V(\tf)$ for \Cref{prob:Det:Cov:Steer} from Corollary \ref{cor_dv_H_0}
\begin{align}
    V(\tf) = \inf_{K\in \mathcal{K}}  J^{\rm P2}(\tf,K), \nn
\end{align}
subject to \(\Sigma(0)= \initcov\) and \(\Sigma(\tf) = \fincov\). Since \(V(\tf)\) is the infimum of \(J^{\rm P2}(\tf,K)\) over all feasible gains satisfying the endpoint constraints, we have \(V(\tf)\le J^{\rm P2}\bigl(\tf,K_{\rm fs}^{\tf}\bigr)=C\) for all sufficiently large \(\tf\).

Now let \(\tf^k\to+\infty\) as \(k \to +\infty\), be the final time sequence introduced in \ref{step:one}, and let \(\aset[\big]{(\Sigma_k,\Pi_k)}_{k\ge1}\) be the corresponding optimal covariance/co-state trajectory on \([0,\tf^k]\). 
Since \(\varpi=0\) and, as shown above, \(Q=0\), the fixed-final time optimal value reduces to the optimal control-energy cost. Using the optimal feedback \(K_k(t)=-R^{-1}B\t\Pi_k(t)\), define
\begin{align*}
t \mapsto g_k(t)\Let \trace\big(\Pi_k(t)\t BR^{-1}B\t\Pi_k(t)\Sigma_k(t)\big)\ge0,
\end{align*}
for all $t \in[0,\tf^k]$ and
\begin{align*}
   t\mapsto g^+(t)\Let \trace\big(\Pi^+(t)\t BR^{-1}B\t\Pi^+(t)\Sigma^+(t)\big) \ge 0,
\end{align*}
for all $t\ge 0$.
Then, $V(\tf^k)=\frac12\int_0^{\tf^k} g_k(t)\, \d t \le C$. For any fixed $T>0$, the uniform convergence $(\Sigma_k,\Pi_k)\to(\Sigma^+,\Pi^+)$ on $[0,T]$ implies that $g_k\to g^+$ uniformly on $[0,T]$. As a result,
\begin{align} \label{runn_cost_suff_large}
\int_0^T g^+(t)\, \d t
&=\lim_{k\to+\infty}\int_0^T g_k(t)\, \d t \le \limsup_{k\to+\infty}\int_0^{\tf^k} g_k(t)\, \d t \le 2C.
\end{align}

Since $g^+\ge0$, we take the limit $T\to+\infty$, to obtain
\begin{align*}
\int_0^{+\infty} g^+(t)\,\d t \le 2C<+\infty.
\end{align*}
Thus, the forward limiting arc has a finite infinite-horizon running cost. 
The same argument also holds for the backward limiting arc.

Finally, we show that the value function $V(\cdot)$ admits an alternative representation that will be useful in the following LaSalle analysis. 
Consider the optimal covariance/co-state pair $(\Sigma(\cdot),\Pi(\cdot))$ for a given fixed-final time $\tf>0$ with optimal control $K(t)=-R^{-1} B\t \Pi(t)$ for all $t\in[0,\tf]$. 
Then, from the Riccati and covariance dynamics \eqref{final_ricc_dyn} and \eqref{final_cov_dyn}, it follows that
\begin{align}\label{def:V_alt_pre_expr}
    \frac{\d}{\d t} & \trace(\Pi(t) \Sigma(t)) =-\trace(Q\Sigma(t))-\trace(\Pi(t)BR^{-1} B\t \Pi(t) \Sigma(t)) \nn\\
    &\hspace{1 em}+ \trace(\Pi(t) D),
\end{align}
for all $t\in[0,\tf]$. 
Therefore, by integrating both sides \eqref{def:V_alt_pre_expr} over the time interval $[0,\tf]$, and using the boundary conditions $\Sigma(0)=\initcov$ and $\Sigma(\tf)=\fincov$, yields
\begin{align}
\label{def:V_alt_expr}
    V(\tf)=\half \trace(\Pi_0 \initcov) 
    - \half \trace(\Pi_{\rm f} \fincov) + \half \int_0^{\tf} \trace(\Pi(t) D) \, \d t.
\end{align}

\medskip

\noindent\proofstep{step:three}{Forward and backward LaSalle arguments}
Define an accumulator function $\A:\mathbb{R} \to \mathbb{R}$ and let  $\dot {\A}(t)=\trace(\Pi(t) D)$ for all $t\ge 0$ with ${\A}(0)=0$. 
Take the negative of the alternative expression for the value function from \eqref{def:V_alt_expr} as the Lyapunov-like function 
\((\Sigma(t),\Pi(t),\A(t)) \mapsto \V(\Sigma(t),\Pi(t),\A(t)) \Let \trace(\Sigma(t)\Pi(t))-\A(t). 
\)
With a slight abuse of notation, let $\V(t) \Let \V(\Sigma(t),\Pi(t),\A(t))$.

We first show that the limiting co-state $\Pi^+(\cdot)$ is bounded on $[0,+\infty)$. 
By Assumption~\ref{assump:point_limit}, $\Sigma^+(t) \succcurlyeq \varepsilon I_n$ for some $\varepsilon>0$ and all sufficiently large $t$, so the finite-cost bound of \ref{step:two} yields 
\begin{align*}\int_0^{+\infty}\trace\big(\Pi^+(t) BR^{-1}B\t\Pi^+(t)\big)\,\d t \le \varepsilon^{-1} \int_0^{+\infty}g^+(t)\,\d t<+\infty.
\end{align*}
Together with the global definedness of the limiting arc, and the explicit solution of the Riccati flow in~\eqref{final_ricc_dyn} with $Q=0$, it follows that $\Pi^+(\cdot)$ remains bounded on $[0,+\infty)$ (e.g., see also \cite{riccati_inf} for convergence criteria for matrix Riccati differential equations).

Boundedness of ${\A}(\cdot)$ along the limiting arc follows from integrating \eqref{def:V_alt_pre_expr} with $Q=0$, which yields the identity
\begin{align*}
A(t)=\trace(\Pi(t)\Sigma(t))-\trace(\Pi(0)\Sigma(0))+\int_0^{t} g^+(s)\,\d s,
\end{align*}
where the right-hand side is bounded on $[0,+\infty)$ by Assumption~\ref{assump:point_limit} and $\int_0^{+\infty}g^+\le 2C$ from \ref{step:two}.

Moreover, by \eqref{eq:horizon_uniform_bound} of Proposition~\ref{cor:cov_costate_compactness}, the limiting arc $(\Sigma^+(\cdot),\Pi^+(\cdot))$ is uniformly bounded, and the dynamics \eqref{final_ricc_dyn}, \eqref{final_cov_dyn} together with $\dot\A(t)=\trace(\Pi(t) D)$ are autonomous, hence its $\omega$-limit set is nonempty, compact and invariant \cite[Lemma~4.1]{ref_khalil}.

Along the covariance dynamics~\eqref{final_ricc_dyn},~\eqref{final_cov_dyn}, taking the time derivative of $\V(t)$ leads to  
\begin{align*}
    \dot \V(t)  =-\trace(Q\Sigma(t)) - \trace(\Pi(t)\t B R^{-1} B\t \Pi(t) \Sigma(t)) \le 0.
\end{align*}
Consequently, $\dot \V(t) = 0$ is satisfied when $\trace(Q\Sigma)=0$ (equivalently, $Q=0$ since $\Sigma\succ 0$), and hence
\begin{align*}
    \trace(\Pi\t B R^{-1} B\t \Pi \Sigma)=0.
\end{align*}
Since $\trace(\Pi\t B R^{-1} B\t \Pi \Sigma)=\lVert R^{-\half}B\t \Pi \Sigma^{1/2} \rVert_{\rm F}^2\ge 0,$ it follows that $B\t \Pi \Sigma^{1/2}=B\t \Pi \Sigma=0$ and $\Sigma \succ 0$, which imply that $B\t \Pi \equiv0$. 
    Therefore, on the set $\{\dot \V=0 \}$, we have 
    \begin{equation}  \label{eqn:Sdot:unc}
    \dot \Sigma = A \Sigma + \Sigma A\t+D.
    \end{equation}
    Furthermore, by taking successive derivatives of $B\t \Pi$ with $Q=0$ and setting them to zero, we get the equality $\Pi A^k B=0$ for all $k\in \{0,1,\dots,n-1\}$. 
    Finally, by Assumption~\ref{assump:standing:1}, $(A,B)$ is controllable, and therefore $\Pi$ is necessarily $0$. 
    The question now is, given that the covariance solution for the fixed-final time version of \cref{prob:Det:Cov:Steer} 
    is bounded from Proposition~\ref{cor:cov_costate_compactness}, whether a bounded solution of  
    the uncontrolled covariance dynamics \eqref{eqn:Sdot:unc} converges to a stationary covariance (Assumption~\ref{assump:point_limit}), or whether it can remain bounded without converging. The solution to the uncontrolled covariance dynamics is
    \begin{align*}
       t \mapsto \Sigma(t)=e^{At}\initcov  e^{A\t t}+ \int_0^t e^{As} D e^{A\t s} \d s \quad\text{for all } t\ge 0,
    \end{align*}
    where, $\int_0^t e^{As} D e^{A\t s} \d s$ is the controllability Gramian of $(A,\Gamma)$. 
    Observe that the large-time behavior is determined by the spectrum of \(A\) and by how the diffusion directions encoded by \(D\) excite the modes of \(A\). 
    This leads to the following modal requirements for boundedness and positive definiteness of $t \mapsto \Sigma(t)$ for all $t>0$:
    \begin{enumerate}[label={\textup{(\(\textsf{M}\)-\alph*)}}, leftmargin=*, widest=b, align=left]
        \item \label{cond:1} Any purely imaginary eigenvalue of \(A\), if present, must be semisimple \cite[Def.~7.1]{hespanha2018linear}  and its associated eigenvectors must lie in \(\mathrm{ker} D\);
        \item \label{cond:2} The eigenvalues of \(A\) with negative real part must be controllable through the pair \(\bigl(A,\Gamma\bigr)\).
    \end{enumerate}
        
    The first condition prevents polynomial growth or diffusion-driven growth along marginally stable directions. 
    The second condition ensures that the stable part of the covariance does not lose rank in the directions relevant to the diffusion-driven stationary covariance.

    These modal requirements then lead to two qualitatively different bounded regimes.
    \begin{enumerate}[label={\textup{(\(\bar{\textsf{M}}\)-\alph*)}}, leftmargin=*, widest=b, align=left]
    \item  \label{cond:3} \(A\) is Hurwitz and \(\bigl(A,\Gamma\bigr)\) is controllable. In this case, the uncontrolled covariance dynamics converge to the unique positive-definite solution \(\Sigma_s\) of the Lyapunov equation
    \begin{align*} 
    A\Sigma_s+\Sigma_s A\t + D=0.
    \end{align*}
    
    Thus, \(\lim_{t\to+\infty}\Sigma(t)=\Sigma_s\), and consequently \(\lim_{t\to+\infty}\dot\Sigma(t)=0\). As a result, substituting $\dot \Sigma=0$, $Q\Sigma=0$ and $\Pi=0$ into \eqref{final_ham_dyn}, we have an asymptotic Hamiltonian-zero, i.e., $\hamil_{\tf}(\cdot)\equiv0$ as $\tf \to+\infty$.
    
    \item \label{cond:4} 
    All eigenvalues of \(A\) either have negative real part or lie on the imaginary axis; the imaginary-axis eigenvalues are semisimple; the associated marginal directions lie in \(\ker D\); 
    and the stable modes are controllable through \((A,\Gamma)\). In this case, the covariance trajectory remains bounded, but it need not converge to a single stationary covariance. Instead, oscillatory covariance behavior may occur, contradicting the Assumption \ref{assump:point_limit}.
\end{enumerate}

Under the second condition \ref{cond:4}, notice that the Lyapunov equation $A\Sigma + \Sigma A\t + D = 0$ has infinitely many positive-definite solutions for $\Sigma$. 
Similarly to the first condition \ref{cond:3}, substituting $Q\Sigma=0$ and $\Pi=0$ into \eqref{final_ham_dyn} yields asymptotic Hamiltonian-zero, regardless of Assumption \ref{assump:point_limit}.
    
In conclusion, the uncontrolled covariance dynamics~\eqref{eqn:Sdot:unc} has a \emph{bounded solution at infinity}, 
whenever the Lyapunov equation, $A\Sigma + \Sigma A\t + D = 0$ has a positive-definite solution. 

From Assumption \ref{assump:point_limit}, only the first case \ref{cond:3}  holds true. 
Consequently, the largest invariant subset of $\{\dot \V = 0 \}$ is the set
\begin{align*}
    \mathcal{I}   \Let \left\{ (\Sigma,\Pi) \in \mathbb{S}_{++}^n \times \{0\}  \;\middle\vert\;  
\begin{array}{@{}l@{}}
A\Sigma + \Sigma A\t + D = 0 \text{ and} \\ ~~~\trace(Q\Sigma) = 0
\end{array}
\right\}.
\end{align*}
It follows from LaSalle's invariance principle~\cite[Theorem~4.4]{ref_khalil}, that all trajectories asymptotically converge to $\mathcal{I}$. 

In summary, every bounded forward limiting trajectory approaches the largest invariant subset of $\{\dot{\V} = 0 \}$. 
Hence, the invariant set is precisely $\mathcal G_\infty\times\{0\}$ and
\begin{align*}
    \Pi^+(t)\to 0, \quad \text{dist}\bigl(\Sigma^+(t),\mathcal G_\infty\bigr)\to 0, \quad t\to +\infty.
\end{align*}
Now, consider the backward limiting arc $(\Sigma^-(\cdot),\Pi^-(\cdot))$, which satisfies the time-reversed covariance--Riccati dynamics. 
For each time horizon $\tf^k$, define,
\begin{align*}
    \tau \Let \tf^k-t, \quad \tau\in[0,\tf^k],
\end{align*}
so that $\tau=0$ corresponds to the terminal endpoint. 
The associated reverse trajectories are then given by \eqref{eqn:time_rev_cov_ricc}. The infinite-horizon backward arc $(\Sigma^-(\cdot),\Pi^-(\cdot))$ is then obtained by passing to the limit, along a subsequence, on compact intervals in $\tau$.
Moreover, define the reversed accumulator $\hat\A(\tau)\Let\A(\tf^k-\tau)$, 
$\mathring{\hat{\A}}(\tau)=-\trace(\hat\Pi(\tau)D)$, 
where $(\,\mathring{}\,)$ denotes differentiation with respect to $\tau$,
and let $\W(\tau)\Let-\trace(\hat\Pi(\tau) \hat\Sigma(\tau))+\hat\A(\tau)$. 
Since $\W$ is non-increasing and bounded, from \eqref{eqn:time_rev_cov_ricc}, 
\begin{align*}
    \mathring{\W}(\tau) =-\trace(Q\Sigma(\tau))-\trace(\Pi(\tau)\t B R^{-1} B\t \Pi(\tau) \Sigma(\tau))\le 0. 
\end{align*}
Once again, LaSalle's invariance principle applies to the reversed-time dynamics and yields
\begin{align*}
    \Pi^-(\tau)\to 0,
\quad
\text{dist}\bigl(\Sigma^-(\tau),\mathcal G_\infty\bigr)\to 0,
\quad \tau\to+\infty.
\end{align*}
Consequently, both the forward and backward limiting arcs approach the same asymptotic gateway set.

\medskip

\begin{figure*}[!t]
    \centering
    \includegraphics[scale=0.33]{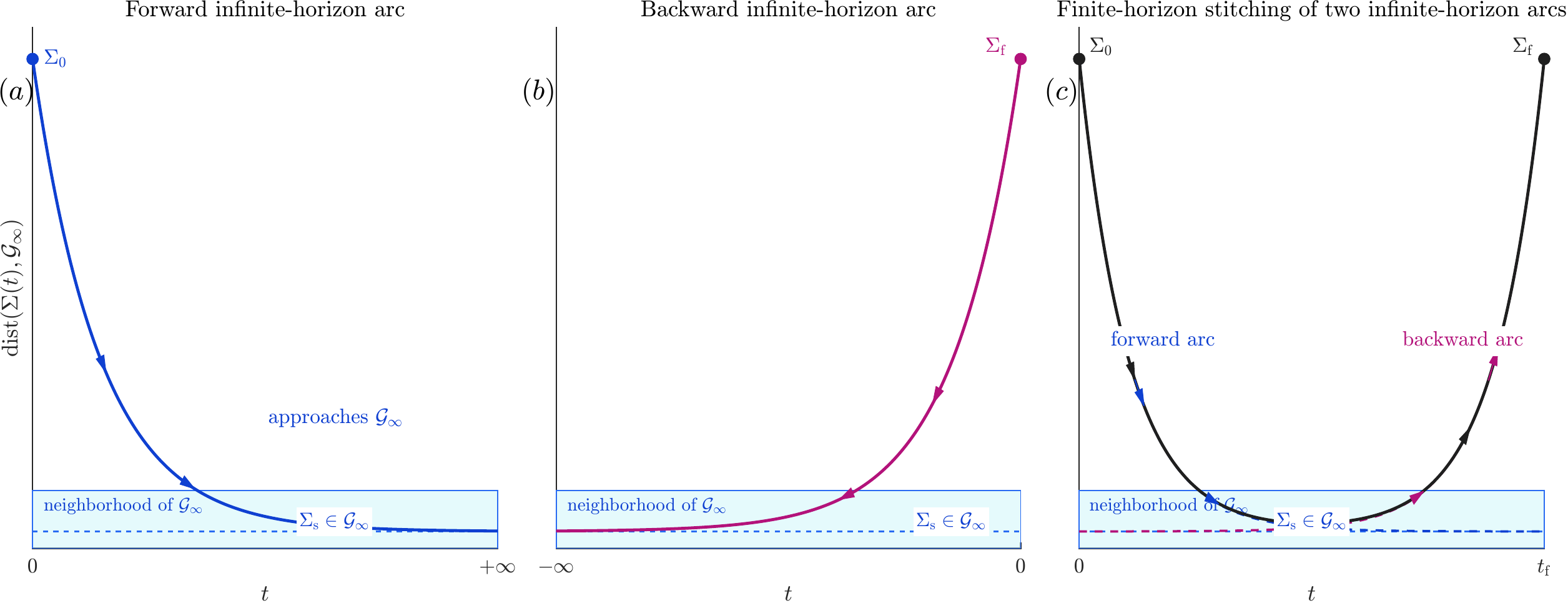}
    \caption{Turnpike interpretation of Asymptotic Connections.
    (a) shows a forward infinite-horizon arc approaching the Asymptotic Gateway $\mathcal{G}_\infty$ from the initial covariance $\initcov $.
    (b) shows a backward infinite-horizon arc approaching $\mathcal{G}_\infty$ in reverse time from the terminal covariance $\fincov $.
    (c) illustrates the corresponding finite-horizon stitching: for sufficiently large $\tf$, the optimal trajectory approaches $\mathcal{G}_\infty$, remains near a stationary covariance $\Sigma_{\rm s}\in\mathcal{G}_\infty$ for an extended interval, and then departs to satisfy the prescribed terminal boundary condition.
    }
    \label{fig:AC_turnpike_stitching}
\end{figure*}

\noindent\proofstep{step:four}{Relation with the asymptotic connection notion}
\ref{step:one} to \ref{step:three} show that the family of optimal covariance/co-state trajectories is %uniformly bounded, equi-Lipschitz, and therefore 
equicontinuous on compact intervals. 
Consequently, every sequence $\tf^k\to+\infty$ admits a subsequence whose associated optimal covariance trajectories converge uniformly on compact intervals to forward and backward limiting arcs approaching $\mathcal G_\infty$. 

Since the covariance/co-state pair also depends continuously on the final time $\tf$ (see Lemma~\ref{lem:reg:init_cost}),
this asymptotic behavior is inherited by the finite-horizon optimal trajectories for all sufficiently large final times. 
Therefore, for every neighborhood of $\mathcal G_\infty$, the corresponding optimal covariance trajectories enter that neighborhood when $\tf$ is sufficiently large. 
In the sense of Definition~\ref{asymp_connection_cov}, this yields an Asymptotic Connection for the family $\{\Sigma_{\tf}(\cdot)\}_{\tf>0}$ in the limit $\tf\to+\infty$.

\medskip

\noindent\proofstep{step:five}{Asymptotic Hamiltonian-zero}
Since $\Pi^\pm(\cdot)\to 0$ and $\text{dist}\bigl(\Sigma^\pm(\cdot),\mathcal G_\infty\bigr)\to 0$, every limit point $\Sigma_{\rm s}$ of the limiting covariance trajectories satisfies,
\begin{align*}
    A\Sigma_{\rm s}+\Sigma_{\rm s}A\t +D=0, \quad \trace(Q\Sigma_{\rm s})=0.
\end{align*}
Therefore, by the time-invariance and the continuity of the Hamiltonian \eqref{final_ham_dyn} with respect to final time $\tf$ from Lemma~\ref{lem:reg:init_cost},
it follows that $\hamil_{\tf}(\Sigma^\pm(\cdot),\Pi^\pm(\cdot))\to 0$ as $\tf \to +\infty$.
Accordingly, under the stated assumptions, the corresponding Hamiltonian converges to zero asymptotically as $\tf \to +\infty$, and $\tf=+\infty$ is an asymptotic Hamiltonian-zero for \Cref{prob:Det:Cov:Steer}. 
\end{proof}

Recall that Theorem~\ref{prop_exist_AC} showed that  $\asmgate \not= \varnothing$ implies an asymptotic Hamiltonian-zero. This lemma shows that an empty gateway rules out this possibility. We will employ this result in the proof of Theorem \ref{thm:cov-steering}.

\begin{myOCP}
\begin{lemma}
\label{lem_empty_gateway}
Consider the family of fixed-final time covariance steering problems associated with \Cref{prob:Det:Cov:Steer}, with $\varpi=0$.
Let Assumption~\ref{assump:standing} and Assumption~\ref{assump:point_limit} hold.
If $\mathcal G_\infty=\varnothing$, then
\begin{align*}
\lim_{\tf\to+\infty} \hamil_{\tf}(\cdot)>0,
\end{align*}
i.e., $\tf=+\infty$ cannot be an asymptotic Hamiltonian-zero.
\end{lemma}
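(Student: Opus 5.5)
The plan is to reuse the limiting-arc machinery from the proof of Theorem~\ref{prop_exist_AC} and argue by contradiction. Since the Hamiltonian is constant along each extremal (Lemma~\ref{lem:reg:init_cost}), $\hamil_{\tf}=\hamil(\initcov,\Pi_{\tf}(0))=\hamil(\Sigma_{\tf}(t),\Pi_{\tf}(t))$ for every $t\in[0,\tf]$. First I would fix a sequence $\tf^k\to+\infty$ and use Proposition~\ref{cor:cov_costate_compactness} together with \ref{step:one} to extract a forward limiting arc $(\Sigma^+,\Pi^+)$ with $\Pi_{\tf^k}(0)\to\Pi^+(0)$. By continuity of $\hamil$, $\hamil_{\tf^k}\to\hamil(\Sigma^+(t),\Pi^+(t))=:h^\infty$, and this value is constant in $t$ along the arc. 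It then suffices to show $h^\infty>0$ for every such subsequential limit, which I will also show is independent of the subsequence.

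The key step is to evaluate $h^\infty$ as $t\to+\infty$ along the forward arc. By Assumption~\ref{assump:point_limit}, $\Sigma^+(t)\to\Sigma_\infty\succ0$. I would show $\Pi^+(t)$ converges, or at least that $\dot\Sigma^+(t)\to0$ along a sequence, so that $\Sigma_\infty$ is a stationary point of the closed-loop system. Concretely, $(\Sigma_\infty,\Pi_\infty)$ should be an equilibrium of \eqref{final_ricc_dyn}, \eqref{final_cov_dyn}. Eliminating $BR^{-1}B\t\Pi_\infty\Sigma_\infty$ via the stationary covariance equation gives
\[
h^\infty=\tfrac12\trace(Q\Sigma_\infty)+\tfrac12\trace\bigl(\Pi_\infty BR^{-1}B\t\Pi_\infty\Sigma_\infty\bigr)\ge0,
\]
using the identity $\trace(\Pi(A\Sigma+\Sigma A\t+D))=\trace(\Pi BR^{-1}B\t\Pi\Sigma)+\tfrac12\trace(\Pi\dot\Sigma)$, which follows from the symmetric trace manipulation already behind \eqref{tilde_H_sens}.

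Equality in this bound forces $\trace(Q\Sigma_\infty)=0$ and $B\t\Pi_\infty\Sigma_\infty^{1/2}=0$, hence $B\t\Pi_\infty=0$. The Riccati equilibrium with $Q\Sigma_\infty=0$ and $\Sigma_\infty\succ0$ gives $Q=0$. Then $A\t\Pi_\infty+\Pi_\infty A=0$ with $\Pi_\infty B=0$ yields $\Pi_\infty A^kB=0$, so controllability (Assumption~\ref{assump:standing:1}) gives $\Pi_\infty=0$. The stationary covariance equation then reduces to $A\Sigma_\infty+\Sigma_\infty A\t+D=0$, so $\Sigma_\infty\in\mathcal G_\infty$, contradicting $\mathcal G_\infty=\varnothing$. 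Hence $h^\infty>0$. Because $\Sigma_\infty$ is fixed by Assumption~\ref{assump:point_limit} and the equilibrium co-state is determined by it, all subsequential limits agree, so $\lim_{\tf\to+\infty}\hamil_{\tf}$ exists and is positive.

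The main obstacle is justifying that the forward arc actually settles at an equilibrium, that is, that $\Pi^+(t)$ stays bounded and $\dot\Sigma^+(t)\to0$. Without a nonempty gateway we lose the uniform cost bound of \ref{step:two}, so the LaSalle argument of \ref{step:three} does not apply directly. I would first try to exploit the constant-Hamiltonian identity itself. With $\Sigma^+(t)\to\Sigma_\infty\succ0$, the relation $\hamil(\Sigma^+,\Pi^+)=h^\infty$ is a quadratic constraint in $\Pi^+$ whose term $-\tfrac12\trace(\Pi BR^{-1}B\t\Pi\Sigma)$ is coercive in the controllable directions. Combining this with controllability (via the successive-derivative argument) should bound $\Pi^+$ on $[0,+\infty)$. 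Barbalat's lemma applied to the uniformly continuous and convergent $\Sigma^+$ would then give $\dot\Sigma^+\to0$, and an $\omega$-limit argument would give $\Pi^+$ approaching the equilibrium set described above.
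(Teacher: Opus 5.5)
Your skeleton matches the paper's proof. You argue by contradiction and use constancy of $\hamil$ along extremals; your observation that $\hamil(\Sigma^+(t),\Pi^+(t))$ is itself constant along the limiting arc is a clean way to say this. You then use Barbalat to get $\dot\Sigma^+\to0$, rewrite $\hamil=\tfrac12\trace(\dot\Sigma\Pi)+\tfrac12\trace(\Pi BR^{-1}B^{\top}\Pi\Sigma)+\tfrac12\trace(Q\Sigma)$, and let the vanishing of the two nonnegative terms force $\Sigma_\infty\in\mathcal G_\infty$. Your auxiliary trace identity is off by a factor $2$: it should read $\tfrac12\trace(\Pi(A\Sigma+\Sigma A^{\top}+D))=\trace(\Pi BR^{-1}B^{\top}\Pi\Sigma)+\tfrac12\trace(\Pi\dot\Sigma)$. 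Your resulting formula for $h^\infty$ is still correct.

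The genuine gap is the one you flag. Everything hinges on $\sup_{t\ge0}\|\Pi^+(t)\|<+\infty$. You need it for uniform continuity of $\dot\Sigma^+$ on $[0,+\infty)$ (Barbalat) and again for $\trace(\dot\Sigma^+\Pi^+)\to0$. Nothing available supplies it: Proposition~\ref{cor:cov_costate_compactness} only gives a bound $\alpha_T$ on $[0,T]$ that may grow with $T$, and the cost bound of \ref{step:two} requires $\mathcal G_\infty\neq\varnothing$.

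Your proposed repair does not work. On the level set you have $\tfrac12\|R^{-1/2}B^{\top}\Pi\Sigma^{1/2}\|_{\rm F}^2=\tfrac12\trace(Q\Sigma)+\tfrac12\trace\bigl(\Pi(A\Sigma+\Sigma A^{\top}+D)\bigr)-h^\infty$. The right-hand side is linear in \emph{all} of $\Pi$, so with $\varepsilon I_n\preccurlyeq\Sigma^+\preccurlyeq cI_n$ you only get $\|B^{\top}\Pi^+\|^2\le c'(1+\|\Pi^+\|)$. That is compatible with $\|\Pi^+(t)\|\to+\infty$. The successive-derivative argument cannot upgrade this, because it is qualitative: it yields $\Pi A^kB=0$ when $B^{\top}\Pi\equiv0$ on an interval. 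It gives no estimate of $\Pi$ from bounds on $B^{\top}\Pi$, since the derivatives of $B^{\top}\Pi$ bring in $\dot\Pi$, which you do not control.

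The paper does not resolve this point either. It applies Barbalat on $[0,+\infty)$ citing Proposition~\ref{cor:cov_costate_compactness}, which only yields uniform continuity on compact intervals. It then drops $\trace(\dot\Sigma\Pi)$ in the limit, tacitly using the same uniform bound on the limiting co-state. So a genuinely additional argument, or an extra hypothesis, bounding $\Pi^+$ on $[0,+\infty)$ is required.

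Two smaller points.

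First, you do not need $\Pi^+$ to converge, nor $(\Sigma_\infty,\Pi_\infty)$ to be an equilibrium of \eqref{final_ricc_dyn}. Once $\Pi^+$ is bounded and $\dot\Sigma^+\to0$, any limit point $\Pi$ of $\Pi^+(t)$ satisfies the stationary covariance equation at $\Sigma_\infty$. With $B^{\top}\Pi=0$ that equation already reads $A\Sigma_\infty+\Sigma_\infty A^{\top}+D=0$, which is how the paper finishes. Your detour through the algebraic Riccati equation and $\Pi_\infty=0$ is therefore superfluous. Also, $Q=0$ follows directly from $\trace(Q\Sigma_\infty)=0$, $Q\succcurlyeq0$, $\Sigma_\infty\succ0$, not from the Riccati equilibrium.

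Second, your claim that all subsequential limits coincide because ``the equilibrium co-state is determined by $\Sigma_\infty$'' is unjustified. The stationary covariance and algebraic Riccati equations need not pin down $\Pi$ uniquely. You do not need this for the substance: $\hamil_{\tf}=\hamil(\initcov,\Pi_{\tf}(0))$ is bounded for large $\tf$, so positivity of every subsequential limit already gives $\liminf_{\tf\to+\infty}\hamil_{\tf}>0$. If you want the limit itself to exist, read Assumption~\ref{assump:point_limit} as providing a single limiting trajectory. Then injectivity of $\Pi(0)\mapsto\Sigma(\tau)$ (Fact~\ref{rem:fixed_horizon_wellposedness}) shows that $\Pi^+(0)$ does not depend on the subsequence.
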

\end{myOCP}

\begin{proof}
It follows from Assumption~\ref{assump:standing:1}, the compactness argument from  Proposition~\ref{cor:cov_costate_compactness}, and the Riccati and covariance dynamics \eqref{final_ricc_dyn} and \eqref{final_cov_dyn}, that the family $\{\dot\Sigma_{\tf}(\cdot)\}_{\tf>0}$ is uniformly continuous on compact subintervals.
From Assumption~\ref{assump:point_limit}, the limiting covariance trajectory $\aset[\big]{(\Sigma_{\tf}(\cdot))}_{\tf>0}$ as $\tf\to +\infty$ is bounded and  admits a well-defined limit as $t\to+\infty$. 
From the uniform continuity of $t \mapsto \dot\Sigma(t)$ on the interval $t\in[0,+\infty)$, Barbalat's Lemma~\cite[Lemma~8.2]{ref_khalil} implies that $\lim_{t\to+\infty}\dot\Sigma(t)=0$ along any such limiting trajectory. 
Specifically, Assumption~\ref{assump:point_limit}, together with $\lim_{t\to+\infty}\dot\Sigma(t)=0$, guarantees that the Hamiltonian admits a well-defined limit as $\tf\to+\infty$, i.e., $\hamil^\infty(\cdot)\Let\lim_{\tf\to+\infty}\hamil_{\tf}(\cdot)$ exists.

The Hamiltonian \eqref{final_ham_dyn} can be rewritten in terms of the covariance dynamics as follows
\begin{align}
\hamil_{\tf}(t)= \trace\Big( \half\dot \Sigma(t) \Pi(t) + \half\Pi(t)\t BR^{-1}B\t\Pi(t)\Sigma(t)  +\half Q\Sigma(t)\Big).
\end{align}
Assume, for contradiction, that $\lim_{\tf\to+\infty} \hamil_{\tf}(t)=0$. Since the dynamics are autonomous, $\hamil_{\tf}(t) =\hamil(\Sigma_{\tf}(t),\Pi_{\tf}(t))$
is constant along each optimal extremal trajectory for every fixed time $t>0$. 
Passing first to the limiting trajectory as $\tf\to+\infty$ and then letting $t\to+\infty$, using $\lim_{t\to+\infty}\dot\Sigma(t)=0$, we obtain,
\begin{align} \label{lemma_v_1_eqn1}
    \lim_{\tf \to +\infty}\hamil_{\tf}(t) = \half\lim_{t \to +\infty} \trace\Big(\Pi(t)\t   BR^{-1}B\t\Pi(t)\Sigma(t) +Q\Sigma(t)\Big),
\end{align}
evaluated along the limiting trajectory, and since both terms inside the trace in \eqref{lemma_v_1_eqn1} are nonnegative, it follows that each term must vanish for the equality $\lim_{\tf\to+\infty} \hamil_{\tf}(t)=0$ to hold. 
Hence, we must have that $\trace(Q\Sigma(t))\to 0$ and $\trace\big(\Pi(t)BR^{-1}B\t \Pi(t)\Sigma(t)\big)\to 0$ as $t \to +\infty$.
This implies that, since $R \succ 0$ and $\Sigma(t)\succ 0$ for all $t\ge 0$, one must have $B\t \Pi(t)\to 0$. 
Finally, using the covariance dynamics together with $\dot \Sigma(t)\to 0$ and $B\t \Pi(t)\to 0$, every limit point $\Sigma_{\rm s}$ of $\Sigma(\cdot)$ satisfies $A \Sigma_{\rm s} + \Sigma_{\rm s} A\t + D=0$ and $\trace(Q \Sigma_{\rm s})=0$. 
Hence, $\Sigma_{\rm s}\in\mathcal G_\infty$, contradicting $\mathcal G_\infty=\varnothing$. Therefore, $\lim_{\tf \to +\infty} \hamil_{\tf}(\cdot) \neq  0$. 
Since $\lim_{\tf \to +\infty} \hamil_{\tf}(\cdot) \ge 0$, it follows that $\lim_{\tf \to +\infty} \hamil_{\tf}(\cdot)>0$.
\end{proof}

As promised in \S \ref{subsect_V_D}, we provide an analysis of the asymptotic limits of the Hamiltonian $\hamil_{\tf}(\cdot)$ as the final time $\tf$ approaches zero and infinity. This result will also be employed in the proof of Theorem \ref{thm:cov-steering} ahead.

\begin{myOCP}
\begin{lemma}[Asymptotic Behavior of the Hamiltonian]
\label{inf_hor_cs}
Assume $Q \succcurlyeq 0$, $R\succ 0$, $\varpi \ge 0$, and let  Assumption~\ref{assump:point_limit} hold. 
For each fixed final time $\tf$, let $\hamil_{\tf}(\cdot)$ denote the associated Hamiltonian of \Cref{prob:Det:Cov:Steer}. 
Then, the following asymptotic limits hold:
\begin{enumerate}[label={\textup{(\ref{inf_hor_cs}-\alph*)}}, leftmargin=*, widest=b, align=left]
\item \label{lemm:cond:1} As $\tf \to 0^+$,
    \begin{align*}
    \lim_{\tf \to 0^+} \hamil_{\tf}(\cdot) = -\infty,
    \end{align*}
    \item \label{lemm:cond:2} As $\tf \to +\infty$,
    \begin{align*}
    \hamil^\infty(\cdot) \Let \lim_{\tf \to +\infty} \hamil_{\tf}(\cdot)\ge \frac{\varpi}{2}.
    \end{align*}
    If, in addition, $\mathcal{G}_\infty=\varnothing$, then $\hamil^\infty(\cdot)>\frac{\varpi}{2}$.
\end{enumerate}
\end{lemma}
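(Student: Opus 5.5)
Both limits rest on one identity. Because the extremal dynamics are autonomous, $\hamil_{\tf}$ is constant along each optimal extremal (Lemma~\ref{lem:reg:init_cost}). Using \eqref{final_cov_dyn}, the Hamiltonian \eqref{final_ham_dyn} can be rewritten as
\[
\hamil_{\tf}=\tfrac12\varpi+\tfrac12\trace\bigl(\Pi\dot\Sigma\bigr)+\tfrac12\trace\bigl(\Pi BR^{-1}B\t\Pi\Sigma\bigr)+\tfrac12\trace(Q\Sigma),
\]
which is the form used in the proof of Lemma~\ref{lem_empty_gateway}, with the $\varpi$ term kept. We may evaluate it at any convenient instant.

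\textbf{Part (a), small horizons.} We evaluate at $t=0$, where $\hamil_{\tf}=\hamil(\initcov,\Pi_{\tf}(0))$. The key input is the asymptotics of $\Pi_{\tf}(0)$ as $\tf\to0^+$. For short times, $\Phi_{12}(\tf,0)=-\tf BR^{-1}B\t+O(\tf^2)$ is not invertible unless $B$ is square, so we work directly with the endpoint map.

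The steps are as follows. First, write $\fincov-\initcov$ as the integral of \eqref{final_cov_dyn}; for this to be $O(1)$ while $\tf\to0$, the feedback term $-BR^{-1}B\t\Pi\Sigma+(\cdot)\t$ must be of order $1/\tf$. This forces $\|\Pi_{\tf}(0)\|\to\infty$. Second, since the quadratic term dominates the Riccati flow, $\Pi$ behaves like $\Pi_{\tf}(0)$ on $[0,\tf]$ to leading order. In the matched case $D=\kappa BB\t$ this can be read off from \eqref{p3_cont_analyt}, and the general case uses the scaling of the Gramian $\int_0^{\tf}e^{As}BR^{-1}B\t e^{A\t s}\d s\sim\tf$ (controllable directions) together with higher-order terms. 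Third, in $\hamil(\initcov,\Pi_0)$ the term $-\tfrac12\trace(\Pi_0BR^{-1}B\t\Pi_0\initcov)$ is quadratic and negative, of order $\tf^{-2}$. The remaining terms $\trace(\Pi_0A\initcov)$ and $\tfrac12\trace(\Pi_0D)$ are only linear in $\Pi_0$, so $\hamil_{\tf}\to-\infty$.

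The delicate point, and I expect it to be the \textbf{main obstacle}, is showing that $B\t\Pi_{\tf}(0)$ itself, and not merely $\Pi_{\tf}(0)$, blows up. This must hold in the directions where $\fincov\neq\initcov$, which is where Assumption~\ref{assump:standing:2} ($\initcov\neq\fincov$) enters. My first attempt would be a value-function argument. Fact~\ref{ham_val_rel_lemma} gives $\hamil_{\tf}=V'(\tf)$. A Gramian lower bound shows $V(\tf)\gtrsim c/\tf$ for some $c>0$ with $c=0$ only if $\initcov=\fincov$. An upper bound on the same order comes from a comparison trajectory, for example linear interpolation in a square-root factorization. Together these give $V'(\tf)\to-\infty$, provided we also use the monotone/convex behaviour of $V$ near $0$, or the explicit formula \eqref{p3_cont_analyt} as a guide.

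\textbf{Part (b), large horizons.} We follow the proof of Lemma~\ref{lem_empty_gateway} but keep $\varpi$. By Proposition~\ref{cor:cov_costate_compactness} and Assumption~\ref{assump:point_limit}, the limiting arc exists and $\Sigma_\star(t)\to\Sigma_\infty\succ0$. Barbalat's lemma, using uniform continuity of $\dot\Sigma$ from the bounds in \ref{step:one}, gives $\dot\Sigma\to0$; this step also requires boundedness of $\Pi$ along the arc, which I would take from \ref{step:three}. The limit $\hamil^\infty$ then exists. Since $\hamil_{\tf}$ is constant in $t$, we pass $\tf\to\infty$ and then $t\to\infty$ to obtain
\[
\hamil^\infty=\tfrac12\varpi+\tfrac12\lim_{t\to\infty}\Bigl[\trace\bigl(\Pi BR^{-1}B\t\Pi\Sigma\bigr)+\trace(Q\Sigma)\Bigr]\ge\tfrac12\varpi.
\]
If equality holds, both traces vanish, so $B\t\Pi\to0$. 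The limit $\Sigma_\infty$ then solves the Lyapunov equation with $\trace(Q\Sigma_\infty)=0$, hence $\Sigma_\infty\in\mathcal G_\infty$. Therefore, if $\mathcal G_\infty=\varnothing$, the inequality is strict.
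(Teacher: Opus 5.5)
The genuine gap is in part (a). You correctly identify the crux: the negative quadratic term sees only $B^\top\Pi_{\tf}(0)$, while the linear terms see all of $\Pi_{\tf}(0)$. Your value-function plan, however, does not close it.

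\emph{The sandwich does not control $V'$.} Two-sided bounds $c_1/\tf\le V(\tf)\le c_2/\tf$ give no pointwise control of $V'(\tf)=\hamil_{\tf}$. For example, $V(\tf)=(2+\sin(\tf^{-3}))/\tf$ obeys them, yet $V'$ is unbounded above near $0$. You would need convexity of $V$ near $0^+$; then $V'(\tf)\le [V(\lambda\tf)-V(\tf)]/((\lambda-1)\tf)$ with $\lambda>c_2/c_1$ suffices. Neither your sketch nor the paper provides that convexity.

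\emph{The order $1/\tf$ is wrong in general.} The order $1/\tf$ (and $\tf^{-2}$ for the quadratic term) is the fully actuated picture. For $m<n$, the minimal energy can blow up like $\tf^{-(2c_K-1)}$, depending on which entries of $\fincov-\initcov$ are nonzero. A double integrator with different position variances gives $\tf^{-3}$, so a matching upper bound of order $1/\tf$ is false in general.

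\emph{The reason for abandoning $\Phi_{12}$ is mistaken.} Under controllability, $\Phi_{12}(\tf,0)$ is invertible for every $\tf>0$. For $Q=0$ it equals $-e^{A\tf}$ times the $R^{-1}$-weighted controllability Gramian of $(-A,B)$. Only its leading Taylor term is singular, and its inverse blows up at direction-dependent rates up to $\tf^{-(2c_K-1)}$.

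This rate bookkeeping is what the paper uses. It compares a lower rate $\tf^{-2c_K}$ for $\tfrac12\trace(\Pi_{\tf}(0)BR^{-1}B^\top\Pi_{\tf}(0)\initcov)$, from Seidman-type minimum-energy estimates, with an upper rate $\tf^{-2c_K+1}$ for $\|\Pi_{\tf}(0)\|$. The latter comes from $\Pi_{\tf}(0)\prec-\Phi_{12}(\tf,0)^{-1}\Phi_{11}(\tf,0)$ together with inverse-Gramian bounds, so the quadratic term wins by one power of $\tf^{-1}$. The paper's sketch is terse (a one-sided Loewner bound does not by itself bound the norm). Still, this controllability-index comparison is the ingredient your plan lacks.

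What your route does give rigorously is weaker but still useful. Since $V(\tf)\to+\infty$ as $\tf\to0^+$ and $V'=\hamil_{\tf}$, the Hamiltonian cannot be bounded below on any interval $(0,\epsilon)$, i.e.\ $\liminf_{\tf\to0^+}\hamil_{\tf}=-\infty$. This suffices for the intermediate-value argument in Theorem~\ref{thm:p1d-main}, but not for the full limit asserted in the lemma.

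Your claim that $\Pi\approx\Pi_{\tf}(0)$ on $[0,\tf]$ to leading order is false. For $n=m=1$, $A=Q=0$, $D=0$, $B=R=1$, one finds $\Pi_{\tf}(\tf)/\Pi_{\tf}(0)=\sqrt{\initcov/\fincov}$ for every $\tf$. It is harmless only because you evaluate $\hamil$ at $t=0$.

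Part (b) is essentially the paper's argument. The paper shifts by $\varpi/2$ and invokes Theorem~\ref{prop_exist_AC} and Lemma~\ref{lem_empty_gateway}; you run the computation of Lemma~\ref{lem_empty_gateway} with $\varpi$ kept, which treats both gateway cases at once. One caveat: you borrow boundedness of $\Pi$ on $[0,+\infty)$ from \ref{step:three}, but it is derived there from the cost bound of \ref{step:two}. That bound uses a comparison trajectory passing through a point of $\mathcal G_\infty$, so it is unavailable exactly when $\mathcal G_\infty=\varnothing$. That is the case where you need it, both to discard $\trace(\Pi\dot\Sigma)$ and to get uniform continuity of $\dot\Sigma$ on the whole half-line for Barbalat (the bounds of \ref{step:one} are only $T$-dependent). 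The paper's proof of Lemma~\ref{lem_empty_gateway} makes the same step without comment.
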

\end{myOCP}

\begin{proof}
    The first assertion \ref{lemm:cond:1} follows from the short-time limit behavior of the state transition matrices \eqref{STM_matr}. 
    Specifically, 
    we first show that both $\|B\t\Pi_{\tf}(0)\|\to+ \infty$ and $\|\Pi_{\tf}(0)\|\to+ \infty$ as $\tf\to0^+$.
    To this end,
    assume, by contradiction, that $\sup_{t\in[0,\tf]}\|B\t\Pi_{\tf}(t)\|\le M<+\infty$ along some sequence $\tf\to0^+$.
    Then, the closed-loop matrix $\bar A(t)=A-BR^{-1} B\t\Pi_{\tf}(t)$ satisfies $\sup_{t\in[0,\tf]} \|\bar A(t)\|\le \|A\|+\|BR^{-1}\|M \Let c_M$.
    An application of Gr\"{o}nwall's inequality then yields
    $\|\bar\Phi_{\bar A}(t,0)\| \le e^{c_M t}$ for all $t\in[0,\tf]$.
    Since $\bar\Phi_{\bar A}(\tf,0)-I_n=\int_0^{\tf}\bar A(s)\,\bar\Phi_{\bar A}(s,0)\,\d s$, we therefore obtain
    \begin{align*}
        \|\bar\Phi_{\bar A}(\tf,0)-I_n\| & \le \int_0^{\tf}
        \| \bar A(s) \| \| \bar\Phi_{\bar A}(s,0) \| \, \d s \\
        &\le \int_0^{\tf} c_M\, e^{c_M s}\,\d s =e^{c_M \tf}-1 \rightarrow 0  \quad\text{as } \tf\to0^+ .
    \end{align*}
    Moreover, the diffusion integral term from \eqref{cov_analytic_stm} satisfies
    \begin{align*}
        \lim_{\tf \to0^+}\int_0^{\tf} \bar\Phi_{\bar A}(t,s)\,D\,\bar\Phi_{\bar A}(t,s)\t\, \d s = 0.
    \end{align*}
    Taking the limit $\tf\to0^+$ in \eqref{cov_analytic_stm}, we get
    \begin{align*}
    \fincov = \lim_{\tf\to0^+}\Sigma_{\tf}(\tf) = \initcov ,
    \end{align*}
    contradicting the boundary condition $\fincov \neq\initcov$.
    Therefore, $\sup_{t\in[0,\tf]}\|B\t \Pi_{\tf}(t)\|\to + \infty$ as $\tf\to0^+$.
    Furthermore, since $\|B\t\Pi_{\tf}(0)\| \le \|B\|\,\|\Pi_{\tf}(0)\|$, we also obtain $\|\Pi_{\tf}(0)\|\to+\infty$.

  Next, using Kronecker calculus and lifting the covariance dynamics into a linear control problem with the usual transformation $U\triangleq B\t \Pi \Sigma$, 
    it is not difficult to show that the quadratic term $\frac12 \trace(\Pi_{\tf}(0)BR^{-1}B\t\Pi_{\tf}(0)\initcov )$ in the expression of the Hamiltonian in \eqref{final_ham_dyn}
    grows at least with the rate $\tf^{-2 {c_K}}$ as $\tf\to0^+$
    (see \cite{seidman} for more details, where the time integral of the control energy over the time interval $[0,\tf]$ is shown to blow up with rate $\tf^{-2 {c_K}+1}$), with $c_K$ being the order of the least controllable mode associated with $(A,B)$ (i.e., the controllability index). 
    On the other hand, the monotonicity of the Riccati matrix
    from \cite[Lemma 6]{cite17} gives an upper bound on $\Pi_{\tf}(0)$ in terms of state transition matrices, namely,
    \begin{align*}
         \Pi_{\tf}(0) \prec -\Phi_{12}(\tf,0)^{-1} \Phi_{11}(\tf,0).
    \end{align*}
    As $\tf\to 0^+$, $\Phi_{11}(\tf,0)\to I_n$. 
    From \cite[Lemma III.1]{pavone_inv_gram} it follows that an upper bound for the inverse Gramian $\Phi_{12}(\tf,0)^{-1}$ is $\tf^{-2 {c_K}+1 }$.
    An upper bound on the rate of growth
    of $\|\Pi_{\tf}(0)\|$ as $\tf\to0^+$ 
    can therefore be found by combining the boundedness of $\Phi_{11}(\tf,0)$ with the upper bound $\tf^{-2c_K+1}$ on the inverse Gramian $\Phi_{12}(\tf,0)^{-1}$, so that $\|\Pi_{\tf}(0)\|$ grows at most at the rate $\tf^{-2c_K+1}$. 
    Hence, the quadratic term involving $B\t \Pi_{\tf}(0)$ 
    dominates the remaining linear terms involving $\Pi_{\tf}(0)$ in \eqref{final_ham_dyn}. 
    It follows that $\lim_{\tf\to0^+}\hamil_{\tf}(\cdot)=-\infty$.
 
    The second assertion~\ref{lemm:cond:2} follows from Proposition~\ref{cor:cov_costate_compactness} and  Theorem~\ref{prop_exist_AC}, which establishes the result for the case $\varpi=0$, and from the fact that the time-penalty term does not affect the optimal state or co-state trajectories, but only shifts the Hamiltonian by $\varpi/2$. 
    
    For $\varpi=0$, when $\mathcal{G}_\infty\neq\varnothing$, it follows from Theorem~\ref{prop_exist_AC}
    that $\hamil^{\infty}(\cdot)\ge 0$.
    Similarly, 
   when $\mathcal{G}_\infty=\varnothing$ it follows from Lemma~\ref{lem_empty_gateway} that $\hamil^{\infty}(\cdot)\ge 0$. 
    Since the time penalty  shifts $\hamil_{\tf}(\cdot)$ by $\varpi/2$ without altering the optimal trajectories $(\Sigma,\Pi)$, it follows that $\hamil^{\infty}(\cdot)\ge {\varpi}/{2}$. If, in addition, $\mathcal{G}_\infty=\varnothing$, then Lemma~\ref{lem_empty_gateway} yields the strict inequality.
\end{proof}

\begin{proof}[\highlight{Proof of Theorem \ref{thm:cov-steering}}]
The ``if'' part follows directly from Proposition~\ref{cor:cov_costate_compactness} and Theorem~\ref{prop_exist_AC}. 

For the ``only if'' part, suppose that the condition fails, either because $\varpi >0$ or because $\varpi=0$ and $\mathcal G_\infty = \varnothing$. 
For the first case, we have $\hamil^\infty(\cdot) \ge \varpi/2>0$ from Lemma~\ref{lemm:cond:2}; 
for the latter case, we have $\hamil^\infty(\cdot)>0$ from Lemma~\ref{lem_empty_gateway}. 
In either case, $\lim_{\tf \to \infty} \hamil_{\tf}(\cdot)>0$, and therefore, from Definition~\ref{Def:HamZeros:b}, $\tf=+\infty$ is not an asymptotic Hamiltonian-zero.
\end{proof}

%% file: sections/appendixC.tex
\section{}\label{Appendix_Thm}

\begin{proof}[\highlight{Proof of Theorem~\ref{thm:p1d-main}}]
From Corollary~\ref{cor_dv_H_0}, any finite candidate optimal final time $\tfc$ must satisfy the transversality condition \eqref{total_hamiltonian}, i.e., $\hamil_{\tfc}(\cdot)=0$. Thus, it suffices to show that the map $\tf \mapsto \hamil_{\tf}(\cdot)$ has at least one finite zero.
It is shown in Lemma \ref{inf_hor_cs} (see Appendix~\ref{sec:appen:b}) that, for \Cref{prob:Det:Cov:Steer}, 
one has $\lim_{\tf\to 0^+}\hamil_{\tf}(\cdot)=-\infty$.
Moreover, under condition \textit{a)}, Lemma \ref{inf_hor_cs} asserts that $\lim_{\tf\to+\infty}\hamil_{\tf}(\cdot)\ge {\varpi}/{2}$ and thus $\lim_{\tf\to+\infty}\hamil_{\tf}(\cdot)>0$. 
Also, under any one of the conditions \textit{b)}--\textit{c)}, Lemma \ref{inf_hor_cs} asserts that
the inequality is strict, i.e., $\lim_{\tf\to+\infty}\hamil_{\tf}(\cdot)>{\varpi}/{2}$, therefore $\lim_{\tf\to+\infty}\hamil_{\tf}(\cdot)>0$. 
Since $\hamil_{\tf}(\cdot)$ is continuous with respect to $\tf$ (see Lemma \ref{lem:reg:init_cost}),
the intermediate value theorem implies the existence of at least one $\tfc\in(0,+\infty)$ such that $\hamil_{\tfc}(\cdot)=0$.
Hence, \Cref{prob:Det:Cov:Steer} admits at least one finite candidate free-final time minimizer. Our proof is complete.
\end{proof}